\newtheorem{theorem}{Theorem}[section]
\newtheorem{lemma}[theorem]{Lemma}
\newtheorem{proposition}[theorem]{Proposition}
\newtheorem{corollary}[theorem]{Corollary}
\newtheorem{case}{Case~(I)}
\newtheorem*{case2}{Case~(II)}
\theoremstyle{definition}
\newtheorem{notation}[theorem]{Notation}
\newtheorem{definition}[theorem]{Definition}
\newtheorem{construction}[theorem]{Construction}
\newenvironment{vartype}[1]
  {\innervartype}
  {\endinnervartype}
\newtheorem{remark}[theorem]{Remark}
\theoremstyle{remark}
\numberwithin{equation}{section}
\def\quot{/\!\!/}
\def\wh#1{\widehat{#1}}
\def\ol#1{\overline{#1}}
\def\bangle#1{\langle #1 \rangle}
\def\sei{\mathrel{\mathop:}=}
\def\CC{{\mathbb C}}
\def\KK{{\mathbb K}}
\def\ZZ{{\mathbb Z}}
\def\QQ{{\mathbb Q}}
\def\PP{{\mathbb P}}
\def\Eff{{\rm Eff}}
\def\Mov{{\rm Mov}}
\def\Ample{{\rm Ample}}
\def\SAmple{{\rm SAmple}}
\def\rlv{{\rm rlv}}
\def\trop{{\rm trop}}
\def\ff{{\mathfrak{F}}}
\def\Cl{\operatorname{Cl}}
\def\Spec{{\rm Spec}}
\def\cone{{\rm cone}}
\def\lin{{\rm lin}}
\def\rk{{\rm rk}\,}
\def\cox{{\mathcal R}}
\def\tm{{\tau^-}}
\def\tp{{\tau^+}}
\def\tx{{\tau_X}}
\subjclass[2010]{14J45, 14L30}
\begin{document}
\title[Smooth projective $T$-varieties of complexity~1 with $\rho(X) = 2$]%
{Smooth projective varieties with a torus action of complexity~1 and Picard number~2}
\author[A.~Fahrner]{Anne Fahrner} 
\address{Mathematisches Institut, Universit\"at T\"ubingen,
Auf der Morgenstelle 10, 72076 T\"ubingen, Germany}
\email{fahrner@math.uni-tuebingen.de}
\author[J.~Hausen]{J\"urgen Hausen}
\address{Mathematisches Institut, Universit\"at T\"ubingen,
Auf der Morgenstelle 10, 72076 T\"ubingen, Germany}
\email{hausen@math.uni-tuebingen.de}
\author[M.~Nicolussi]{Michele Nicolussi} 
\address{Mathematisches Institut, Universit\"at T\"ubingen,
Auf der Morgenstelle 10, 72076 T\"ubingen, Germany}
\email{michele.nicolussi@uni-tuebingen.de}

\begin{abstract}
We give an explicit description of all
smooth varieties with a torus action of 
complexity one having Picard number at most two.
As a consequence, we classify in every dimension 
the smooth (almost) Fano varieties with a torus action of 
complexity one having Picard number two.
It turns out that all the Fano examples are obtained 
via an iterated generalized cone construction
from a series of smooth varieties of dimension at
most seven.
\end{abstract}

\maketitle


\section{Introduction}

A basic intention of this article is to 
contribute to the classification of 
smooth (almost) Fano varieties with torus action.
Most studied in this context
are the toric Fano varieties; based on 
their description in terms of lattice 
polytopes, there are meanwhile 
classification results up to dimension 
nine~\cite{Ba:1982,WaWa,Ba:1999,KrNi,Obro,Paff9}.
We go one step beyond the toric case 
and focus on rational varieties with a 
torus action of complexity one, i.e., 
the general torus orbit is 
of dimension one less than the variety;
see~\cite{Su} for results on smooth 
Fano threefolds with an action of a 
two-dimensional torus.

Instead of bounding the dimension, we 
look here at varieties of small Picard 
number. 
Recall that for toric varieties,
the projectives spaces are the only smooth 
examples of Picard number one,  
and we have Kleinschmidt's 
description~\cite{Kl} of all smooth toric 
varieties of Picard number two, which 
in particular allows to figure out the 
(almost) Fano ones in this setting.
We follow that line and study first 
arbitrary smooth projective rational 
varieties with a torus action of 
complexity one. 
The case of Picard number one is 
basically settled by a result of 
Liendo and S\"u\ss~\cite[Thm.~6.5]{LiSu}:
the only non-toric examples are
the smooth projective quadrics in dimensions 
three and four.
Picard number two means to provide an analogue
of Kleinschmidt's description for complexity 
one.

Our approach goes via the Cox ring and we 
use the methods developed 
in~\cite{HaSu:2010,HaHe:2013,ArDeHaLa};
the ground field $\KK$ is algebraically 
closed and of characteristic zero.
Recall that the Cox ring is graded by the 
divisor class group and, together with 
the choice of an ample class, it fixes 
our variety up to isomorphism;
we refer to~\cite{ArDeHaLa} for the 
basic background.
Here comes the first result.

\begin{theorem}
\label{thm:main1}
Every smooth rational projective non-toric 
variety of Picard number two that admits a 
torus action of complexity one 
is isomorphic to precisely one of the following 
varieties $X$, specified by their Cox ring 
$\cox(X)$ and an ample class $u \in \Cl(X)$,
where we always have $\Cl(X) = \ZZ^2$ and the grading 
is fixed by the matrix $[w_1, \ldots ,w_r]$
of generator degrees $\deg(T_i), \deg(S_j) \in \Cl(X)$.

\medskip

{\centering
{\small
\setlength{\tabcolsep}{4pt}
\begin{longtable}{ccccc}
No.
&
\small{$\mathcal{R}(X)$}
&
\small{$[w_1,\ldots, w_r]$}
&
\small{$u$}
&
\small{$\dim(X)$}
\\
\toprule
1
&
$
\frac
{\KK[T_1, \ldots , T_7]}
{\langle T_{1}T_{2}T_{3}^2+T_{4}T_{5}+T_6T_7 \rangle}
$
&
\tiny{
\setlength{\arraycolsep}{2pt}
$
\begin{array}{c}
\left[\!\!
\begin{array}{ccccccc}
0 & 0 & 1 & 1 & 1 & 1 & 1 
\\ 
1 & 1 & 0 & a & 2-a & b & 2-b
\end{array}
\!\!\right]
\\[1em]
1 \le a \le b 
\end{array}
$
}
&
\tiny{
\setlength{\arraycolsep}{2pt}
$
\left[\!\!
\begin{array}{c}
1
\\
1+b
\end{array}
\!\!\right]
$
}
&
\small{$4$}
\\
\midrule
2
&
$
\frac
{\KK[T_1, \ldots , T_7]}
{\langle T_{1}T_{2}T_{3}+T_{4}T_{5}+T_6T_7 \rangle}
$
&
\tiny{
\setlength{\arraycolsep}{2pt}
$
\left[\!\!
\begin{array}{ccccccc}
0 & 0 & 1 & 1 & 0 & 1 & 0 
\\ 
1 & 1 & 0 & 1 & 1 & 1 & 1
\end{array}
\!\!\right]
$
}
&
\tiny{
\setlength{\arraycolsep}{2pt}
$
\left[\!\!
\begin{array}{c}
1
\\
2
\end{array}
\!\!\right]
$
}
&
\small{$4$} 
\\
\midrule
3
&
$
\frac{\KK[T_1, \ldots , T_6]}
{\langle T_{1}T_{2}T_{3}^2+T_{4}T_{5}+T_{6}^2 \rangle}
$
&
\tiny{
\setlength{\arraycolsep}{2pt}
$
\begin{array}{c}
\left[\!\!
\begin{array}{cccccc}
0 & 0 & 1 & 1 & 1 & 1 
\\ 
1 & 1 & 0 & 2-a & a & 1
\end{array}
\!\!\right]
\\[1em]
a \ge 1
\end{array}
$
}
&
\tiny{
\setlength{\arraycolsep}{2pt}
$
\left[\!\!
\begin{array}{c}
1
\\
1+a
\end{array}
\!\!\right]
$
}
&
\small{$3$} 
\\
\midrule
4
&
$
\begin{array}{c}
\frac
{\KK[T_1, \ldots , T_6, S_1,\ldots,S_m]}
{\langle T_{1}T_{2}^{l_{2}}+T_{3}T_{4}^{l_{4}}+T_5T_{6}^{l_{6}} \rangle}
\\
\scriptstyle m \ge 0
\end{array}
$
&
\tiny{
\setlength{\arraycolsep}{2pt}
$
\begin{array}{c}
\left[\!\!
\begin{array}{cccccc|ccc}
0 & 1 & a & 1 & b & 1 & c_1 & \ldots & c_m 
\\ 
1 & 0 & 1 & 0 & 1 & 0 & 1 & \ldots & 1
\end{array}
\!\!\right]
\\[1em]
0 \le a \le b, \ c_1 \le \ldots \le c_m, 
\\
l_{2}=a+l_{4}=b+l_{6}
\end{array}
$
}
&
\tiny{
\setlength{\arraycolsep}{2pt}
$
\begin{array}{c}
\left[\!\!
\begin{array}{c}
d+1
\\
1
\end{array}
\!\!\right]
\\[1em]
d \sei \max(b,c_m)
\end{array}
$
}
&
\small{$m+3$}
\\
\midrule
5
&
$
\begin{array}{c}
\frac
{\KK[T_1, \ldots , T_6, S_1, \ldots, S_m]}
{\langle T_{1}T_{2}+T_{3}^2T_{4}+T_5^2T_{6} \rangle}
\\
\scriptstyle m \ge 0
\end{array}
$
&
\tiny{
\setlength{\arraycolsep}{2pt}
$
\begin{array}{c}
\left[\!\!
\begin{array}{cccccc|ccc}
0 &  2a+1 & a & 1 & a & 1 & 1 & \ldots & 1 
\\ 
1 & 1 & 1 & 0 & 1 & 0 & 0 & \ldots & 0
\end{array}
\!\!\right]
\\[1em]
a \ge 0
\end{array}
$
}
&
\tiny{
\setlength{\arraycolsep}{2pt}
$
\left[\!\!
\begin{array}{c}
2a+2
\\
1
\end{array}
\!\!\right]
$
}
&
\small{$m+3$} 
\\
\midrule 
6
&
$
\begin{array}{c}
\frac
{\KK[T_1, \ldots , T_6, S_1,\ldots,S_m]} 
{\langle T_{1}T_{2}+T_{3}T_{4}+T_5^2T_{6} \rangle}
\\
\scriptstyle m \ge 1
\end{array}
$
&
\tiny{
\setlength{\arraycolsep}{2pt}
$
\begin{array}{c}
\left[\!\!
\begin{array}{cccccc|ccc}
0 & 2c+1 & a & b & c & 1 & 1 & \ldots & 1 
\\ 
1 & 1 & 1 & 1 & 1 & 0 & 0 & \ldots & 0
\end{array}
\!\!\right]
\\[1em]
a, b, c \ge 0, \quad a<b, \\
a+b=2c+1
\end{array}
$
}
&
\tiny{
\setlength{\arraycolsep}{2pt}
$
\left[\!\!
\begin{array}{c}
2c+2
\\
1
\end{array}
\!\!\right]
$
}
&
\small{$m+3$} 
\\
\midrule 
7
&
$
\begin{array}{c}
\frac
{\KK[T_1, \ldots , T_6, S_1,\ldots,S_m]}
{\langle T_{1}T_{2}+T_{3}T_{4}+T_5T_{6} \rangle}
\\
\scriptstyle m \ge 1
\end{array}
$
&
\tiny{
\setlength{\arraycolsep}{2pt}
$
\left[\!\!
\begin{array}{cccccc|ccc}
0 & 0 & 0 & 0 & -1 & 1 & 1 & \ldots & 1 
\\ 
1 & 1 & 1 & 1 & 1 & 1 & 0 & \ldots & 0
\end{array}
\!\!\right]
$
}
&
\tiny{
\setlength{\arraycolsep}{2pt}
$
\left[\!\! 
\begin{array}{c}
1
\\
2
\end{array}
\!\!\right]
$
}
&
\small{$m+3$} 
\\
\midrule
8
&
$
\begin{array}{c}
\frac
{\KK[T_1, \ldots , T_6, S_1,\ldots,S_m]}
{\langle T_{1}T_{2}+T_{3}T_{4}+T_5T_{6} \rangle}
\\
\scriptstyle m \ge 2
\end{array}
$
&
\tiny{
\setlength{\arraycolsep}{2pt}
$
\begin{array}{c}
\left[\!\! 
\begin{array}{cccccc|cccc}
0 & 0 & 0 & 0 & 0 & 0 & 1 & 1 & \ldots & 1 
\\
1 & 1 & 1 & 1 & 1 & 1 & 0 & a_2 & \ldots & a_m
\end{array}
\!\!\right]
\\[1em]
0 \le a_2 \le \ldots \le a_m, ~a_m >0
\end{array}
$
}
&
\tiny{
\setlength{\arraycolsep}{2pt}
$
\left[\!\! 
\begin{array}{c}
1
\\
a_m+1
\end{array}
\!\!\right]
$
}
&
\small{$m+3$}
\\
\midrule
9
&
$
\begin{array}{c}
\frac
{\KK[T_1, \ldots , T_6, S_1, \ldots, S_m]}
{\langle T_{1}T_{2}+T_{3}T_{4}+T_5T_{6} \rangle}
\\
\scriptstyle m \geq 2
\end{array}
$
&
\tiny{
\setlength{\arraycolsep}{2pt}
$
\begin{array}{c}
\left[\!\! 
\begin{array}{cccc|ccc}
0 & a_2 & \ldots & a_6 & 1 & \ldots & 1 
\\ 
1 & 1 & \ldots & 1 & 0 & \ldots & 0
\end{array}
\!\!\right]
\\[1em]
0 \le a_3 \le a_5 \le a_6 \le a_4 \le a_2,\\
 a_2=a_3+a_4=a_5+a_6
\end{array}
$
}
&
\tiny{
\setlength{\arraycolsep}{2pt}
$
\left[\!\! 
\begin{array}{c}
a_2+1
\\
1
\end{array}
\!\!\right]
$
}
&
\small{$m+3$} 
\\
\midrule 
10
&
$
\begin{array}{c}
\frac
{\KK[T_1, \ldots , T_5, S_1,\ldots,S_m]}
{\langle T_{1}T_{2}+T_{3}T_{4}+T_{5}^2 \rangle}
\\
\scriptstyle m \ge 1
\end{array}
$
&
\tiny{
\setlength{\arraycolsep}{2pt}
$
\left[\!\! 
\begin{array}{ccccc|ccc}
1 & 1 & 1 & 1 & 1 & 0 & \ldots & 0 
\\ 
-1 & 1 & 0 & 0 & 0 & 1 & \ldots & 1
\end{array}
\!\!\right]
$
}
&
\tiny{
\setlength{\arraycolsep}{2pt}
$
\left[\!\! 
\begin{array}{c}
2
\\
1
\end{array}
\!\!\right]
$
}
&
\small{$m+2$} 
\\
\midrule
11
&
$
\begin{array}{c}
\frac
{\KK[T_1, \ldots , T_5, S_1,\ldots,S_m]}
{\langle T_{1}T_{2}+T_{3}T_{4}+T_{5}^2 \rangle}
\\
\scriptstyle m \geq 2
\end{array}
$
&
\tiny{
\setlength{\arraycolsep}{2pt}
$
\begin{array}{c}
\left[\!\! 
\begin{array}{ccccc|cccc}
1 & 1 & 1 & 1 & 1 & 0 & a_2 & \ldots & a_m 
\\ 
0 & 0 & 0 & 0 & 0 & 1 & 1 & \ldots & 1
\end{array}
\!\!\right]
\\[1em]
0\le a_2 \le \ldots \le a_m, ~ a_m>0
\end{array}
$
}
&
\tiny{
\setlength{\arraycolsep}{2pt}
$
\left[\!\! 
\begin{array}{c}
a_m + 1
\\
1
\end{array}
\!\!\right]
$
}
&
\small{$m+2$}
\\
\midrule
12
& 
$
\begin{array}{c}
\frac{\KK[T_1, \ldots , T_5, S_1,\ldots,S_m]}
{\langle T_{1}T_{2}+T_{3}T_{4}+T_{5}^2 \rangle}
\\
\scriptstyle m \geq 2
\end{array}
$
&
\tiny{
\setlength{\arraycolsep}{2pt}
$
\begin{array}{c}
\left[\!\!
\begin{array}{ccccc|cccc}
1 & 1 & 1 & 1 & 1 & 0 & 0 & \ldots & 0 
\\ 
0 & 2c & a & b & c & 1 & 1 & \ldots & 1
\end{array}
\!\!\right]
\\[1em]
0 \le a \le c \le b, \ a+b=2c
\end{array}
$
}
&
\tiny{
\setlength{\arraycolsep}{2pt}
$
\left[\!\! 
\begin{array}{c}
1
\\
2c+1
\end{array}
\!\!\right]
$
}
&
\small{$m+2$} 
\\
\midrule
13
&
$
\begin{array}{c}
\frac
{\KK[T_1, \ldots , T_8]}
{
\left\langle 
\begin{array}{l}
\scriptstyle T_{1}T_{2}+T_{3}T_{4}+T_5T_{6},
\\[-3pt]
\scriptstyle \lambda T_{3}T_{4}+T_{5}T_{6}+T_{7}T_{8} 
\end{array}
\right\rangle
}
\\
\scriptstyle \lambda \in \KK^*  \setminus \{1\}
\end{array}
$
&
\tiny{
\setlength{\arraycolsep}{2pt}
$
\left[\!\! 
\begin{array}{cccccccc}
1 & 0 & 1 & 0 & 1 & 0 & 1 & 0 
\\ 
0 & 1 & 0 & 1 & 0 & 1 & 0 & 1
\end{array}
\!\!\right]
$
}
&
\tiny{
\setlength{\arraycolsep}{2pt}
$
\left[\!\! 
\begin{array}{c}
1
\\
1
\end{array}
\!\!\right]
$
}
&
\small{$4$}
\\
\bottomrule
\end{longtable}
}
}
\noindent
Moreover, each of the listed data defines 
a smooth rational non-toric 
projective variety of Picard number two 
coming with a torus action of complexity one.
\end{theorem}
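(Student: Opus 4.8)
The plan is to argue entirely on the side of Cox rings, using the description of rational complete $\TT$-varieties of complexity one from~\cite{HaSu:2010, HaHe:2013} and the bunched-ring formalism of~\cite{ArDeHaLa}. I begin with the classification (the ``only if'' part). If $X$ is a smooth rational projective non-toric variety of Picard number two with a torus action of complexity one, then $\cox(X)$ is finitely generated and, by the structure theory of \cite{HaSu:2010, HaHe:2013}, of \emph{trinomial} type:
\[
\cox(X)
\;=\;
\KK[T_{ij} \mid 0 \le i \le r,\ 1 \le j \le n_i][S_1, \ldots, S_m]
\,/\,
\langle g_0, \ldots, g_{r-2}\rangle ,
\qquad
g_\ell \;=\; \sum_{i=\ell}^{\ell+2} \beta_{\ell i}\, T_i^{l_i},
\]
where $T_i^{l_i} := \prod_j T_{ij}^{l_{ij}}$ runs over the $i$-th block, the $\beta_{\ell i} \in \KK^*$ are normalized as far as possible, and $\cox(X)$ is graded by $\Cl(X)$. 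Since $X$ is smooth, rational and complete, $\Cl(X)$ is free, hence $\Cl(X) = \ZZ^2$, and this grading together with an ample class $u$ recovers $X$. So the first step is to enumerate the discrete invariants: the number $r+1 \ge 3$ of blocks (at least one relation occurs, as $X$ is non-toric), the block sizes $n_i$, the exponent vectors $l_i$, the number $m \ge 0$ of free generators $S_k$, and the degree matrix $[w_1, \ldots, w_\nu]$ with $\nu = n_0 + \cdots + n_r + m$. The complexity-one dimension formula $\dim X = \nu - (r-1) - 2$ together with $\rho(X) = 2$ already constrains these integers.

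Next I would impose the decisive conditions. First, $\cox(X)$ must be a normal complete intersection that is a unique factorization domain; for trinomial rings this is governed by explicit coprimality conditions on the $l_{ij}$ and fixes the admissible exponent patterns. Second, smoothness of $X$: by the regularity criterion for bunched rings, every relevant face $\gamma = \cone(w_i \mid i \in F) \in \rlv(X)$ must be generated by part of a lattice basis of $\ZZ^2$; testing this at the torus fixed points forces all but at most one or two of the exponents $l_{ij}$ to equal~$1$, pins down the block sizes, and leaves only finitely many shapes of the degree matrix. Third, projectivity: the class $u$ must lie in the relative interior of a full-dimensional GIT-chamber of the two-dimensional cone $\Eff(X) \subset \QQ^2$ that is covered by the associated collection of relevant faces, and with $\rho(X) = 2$ there are only finitely many chambers to check. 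Going through the finitely many combinatorial types surviving these requirements, discarding the toric ones (some block is a single variable of exponent one, so its relation can be eliminated) and the reducible ones, and finally normalizing by the residual symmetries --- permutations within and of blocks, admissible rescalings of the variables, and the $\GL_2(\ZZ)$-action on $\Cl(X)$ combined with rescaling of $u$ --- should yield exactly the thirteen families of the table and, at the same time, show that distinct entries give non-isomorphic varieties.

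For the converse statement, I would treat each of the thirteen items directly. For fixed parameter values one checks that the displayed ring is a normal complete intersection and a unique factorization domain, graded by $\ZZ^2$ with precisely the listed generator degrees, that this grading is pointed, and that $u$ lies in the interior of a regular GIT-chamber; then the associated quotient is a smooth complete rational variety of the stated dimension and Picard number two, it is non-toric, and it carries a torus action of complexity one inherited from the canonical complexity-one action on $\Spec \cox(X)$. The families involving free generators $S_1, \ldots, S_m$ and the integer parameters $a, b, c, a_i, c_i$ are handled uniformly: via the minimal toric ambient variety one exhibits each such $X$ as an iterated generalized cone over one of finitely many ``core'' members of small dimension, reducing every verification to finitely many explicit computations.

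The hard part will be exhaustiveness of the classification: making sure that the case distinction in the second step overlooks nothing. This is an essentially finite but lengthy bookkeeping, organized by the local smoothness conditions at the torus fixed points and by the possible subdivisions of the two-dimensional effective cone into GIT-chambers. A close second difficulty is to match the chambers correctly with \emph{projective} smooth quotients and then remove the numerous redundancies produced by the large symmetry group of the setup, so that the thirteen normal forms are certified to be not merely well-defined but pairwise non-isomorphic.
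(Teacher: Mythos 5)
Your overall strategy --- classify through the trinomial Cox ring, impose factoriality, smoothness and projectivity as combinatorial conditions on the exponents and the degree matrix, check the chamber containing $u$, and then normalize by the admissible symmetries, with the converse done by direct verification --- is exactly the route the paper takes. But the proposal is only an outline, and it passes over the one point on which the whole argument hinges: \emph{why the case distinction is finite at all}. Nothing in your first two steps bounds the number $r+1$ of exponent blocks (equivalently the number $r-1$ of relations) or the block sizes $n_i$; the relation $n+m=r+s+2$ coming from $\rho(X)=2$ ties these quantities together but bounds none of them, and ``testing smoothness at the torus fixed points'' presupposes the existence of distinguished points at which all $r+1$ blocks are simultaneously visible. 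The paper manufactures such points in Section~\ref{sec:firstStruct}: it first shows (Lemmas~\ref{lem:ebcni2}--\ref{lem:notwtrop}) that a locally factorial weakly tropical $X$ has $\rho(X)\ge r+3$, so for $\rho(X)=2$ the fan $\Sigma_X$ must contain a big cone and hence an elementary big cone (Corollary~\ref{cor:ebc}); only then does the Jacobian criterion (Lemma~\ref{lem:triple1xy}) yield that at most two blocks can carry an exponent $\ge 2$ along that cone, which, combined with the decomposition $\Eff(X)=\tp\cup\tx\cup\tm$ and the counting of weights in $\tp$ and $\tm$, gives $r\le 3$ and the short list of constellations $(n_0,\dots,n_r)$ of Proposition~\ref{prop:smooth-rho2}. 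Without this step your claim of ``finitely many combinatorial types surviving'' is unjustified, since a priori $r$ could be arbitrarily large; and note that the families are in any case infinite in $m$ and in the integer parameters, so the finiteness being used is precisely finiteness of the constellations, which is what the missing argument supplies.

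Two smaller points. Freeness of $\Cl(X)$ is asserted in your first step but is not free of charge in the paper's setting (arbitrary algebraically closed ground field of characteristic zero); the paper derives it in Corollary~\ref{cor:clfree} from the existence of a two-dimensional relevant face whose two weights generate $K$, and in one subcase (constellation $(3,1,1)$) torsion-freeness is exactly what rules the case out via the criterion of~\cite{HaHe:2013}. Also, the smoothness test is not only the lattice-basis condition on relevant faces: one needs in addition smoothness of $\ol{X}$ along the corresponding strata (Remark~\ref{rem:Qfact}), and it is this second half, not the first, that forces the exponent and block-size restrictions. With these ingredients restored, the remaining content of your plan coincides with the paper's case-by-case analysis in Section~\ref{sec:classif}.
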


Note that by our approach we obtain the 
Cox ring of the respective varieties for free
which in turn allows an explicit treatment of 
geometric questions by means of Cox ring 
based techniques.
In particular, the canonical divisor of the 
varieties listed in Theorem~\ref{thm:main1} 
admits a simple description in terms of the 
defining data.
This enables us to determine for every dimension 
the (finitely many) non-toric smooth rational 
Fano varieties of Picard number two that admit 
a torus action of complexity one;
we refer to Section~\ref{sec:geomfanos} for 
a geometric description of the listed 
varieties.

\begin{theorem}
\label{thm:main2}
Every smooth rational non-toric Fano variety 
of Picard number two that admits a torus action 
of complexity one is isomorphic to precisely 
one of the following varieties $X$, specified 
by their Cox ring $\cox(X)$, where the grading 
by $\Cl(X) = \ZZ^2$ is given by the 
matrix $[w_1, \ldots ,w_r]$ of generator 
degrees $\deg(T_i), \deg(S_j) \in \Cl(X)$
and we list the (ample) anticanonical class 
$-\mathcal{K}_X$.

\medskip

{\centering
{\small
\setlength{\tabcolsep}{4pt}
\begin{longtable}{ccccc}
No.
&
\small{$\mathcal{R}(X)$}
&
\small{$[w_1,\ldots, w_r]$}
&
\small{$-\mathcal{K}_X$}
&
\small{$\dim(X)$}
\\
\toprule
1
&
$
\frac
{\KK[T_1, \ldots , T_7]}
{\langle T_{1}T_{2}T_{3}^2+T_{4}T_{5}+T_6T_7 \rangle}
$
&
\tiny{
\setlength{\arraycolsep}{2pt}
$
\left[\!\!
\begin{array}{ccccccc}
0 & 0 & 1 & 1 & 1 & 1 & 1 
\\ 
1 & 1 & 0 & 1 & 1 &1 & 1
\end{array}
\!\!\right]
$
}
&
\tiny{
\setlength{\arraycolsep}{2pt}
$
\left[\!\!
\begin{array}{c}
3
\\
4
\end{array}
\!\!\right]
$
}
&
\small{$4$}
\\
\midrule
2
&
$
\frac
{\KK[T_1, \ldots , T_7]}
{\langle T_{1}T_{2}T_{3}+T_{4}T_{5}+T_6T_7 \rangle}
$
&
\tiny{
\setlength{\arraycolsep}{2pt}
$
\left[\!\!
\begin{array}{ccccccc}
0 & 0 & 1 & 1 & 0 & 1 & 0 
\\ 
1 & 1 & 0 & 1 & 1 & 1 & 1
\end{array}
\!\!\right]
$
}
&
\tiny{
\setlength{\arraycolsep}{2pt}
$
\left[\!\!
\begin{array}{c}
2
\\
4
\end{array}
\!\!\right]
$
}
&
\small{$4$} 
\\
\midrule
3
&
$
\frac{\KK[T_1, \ldots , T_6]}
{\langle T_{1}T_{2}T_{3}^2+T_{4}T_{5}+T_{6}^2 \rangle}
$
&
\tiny{
\setlength{\arraycolsep}{2pt}
$
\left[\!\!
\begin{array}{cccccc}
0 & 0 & 1 & 1 & 1 & 1 
\\ 
1 & 1 & 0 & 1 & 1 & 1
\end{array}
\!\!\right]
$
}
&
\tiny{
\setlength{\arraycolsep}{2pt}
$
\left[\!\!
\begin{array}{c}
2
\\
3
\end{array}
\!\!\right]
$
}
&
\small{$3$} 
\\
\midrule
4.A
&
$
\begin{array}{c}
\frac
{\KK[T_1, \ldots , T_6, S_1,\ldots,S_m]}
{\langle T_{1}T_{2}+T_{3}T_{4}+T_5T_{6} \rangle}
\\
\scriptstyle m \ge 0
\end{array}
$
&
\tiny{
\setlength{\arraycolsep}{2pt}
$
\begin{array}{c}
\left[\!\!
\begin{array}{cccccc|cccc}
0 & 1 & 0 & 1 & 0 & 1 & c & 0 & \ldots & 0 
\\ 
1 & 0 & 1 & 0 & 1 & 0 & 1 & 1 & \ldots & 1
\end{array}
\!\!\right]
\\[1em]
c \in \{-1, 0 \}, \\ c:=0 \text{ if } m=0
\end{array}
$
}
&
\tiny{
\setlength{\arraycolsep}{2pt}
$
\left[\!\!
\begin{array}{c}
2+c 
\\
2+m
\end{array}
\!\!\right]
$
}
&
\small{$m+3$}
\\
\midrule
4.B
&
$
\begin{array}{c}
\frac
{\KK[T_1, \ldots , T_6, S_1,\ldots,S_m]}
{\langle T_{1}T_{2}^2+T_{3}T_{4}+T_5T_{6} \rangle}
\\
\scriptstyle m \ge 0
\end{array}
$
&
\tiny{
\setlength{\arraycolsep}{2pt}
$
\left[\!\!
\begin{array}{cccccc|ccc}
0 & 1 & 1 & 1 & 1 & 1 & 1 & \ldots & 1
\\ 
1 & 0 & 1 & 0 & 1 & 0 & 1 & \ldots & 1
\end{array}
\!\!\right]
$
}
&
\tiny{
\setlength{\arraycolsep}{2pt}
$
\left[\!\!
\begin{array}{c}
3+m 
\\
2+m
\end{array}
\!\!\right]
$
}
&
\small{$m+3$}
\\
\midrule
4.C
&
$
\begin{array}{c}
\frac
{\KK[T_1, \ldots , T_6, S_1,\ldots,S_m]}
{\langle T_{1}T_{2}^2+T_{3}T_{4}^2+T_5T_{6}^2 \rangle}
\\
\scriptstyle m \ge 0
\end{array}
$
&
\tiny{
\setlength{\arraycolsep}{2pt}
$
\left[\!\!
\begin{array}{cccccc|ccc}
0 & 1 & 0 & 1 & 0 & 1 & 0 & \ldots & 0 
\\ 
1 & 0 & 1 & 0 & 1 & 0 & 1 & \ldots & 1
\end{array}
\!\!\right]
$
}
&
\tiny{
\setlength{\arraycolsep}{2pt}
$
\left[\!\!
\begin{array}{c}
1
\\
2+m
\end{array}
\!\!\right]
$
}
&
\small{$m+3$}
%
%
%
%
\\
\midrule
5
&
$
\begin{array}{c}
\frac
{\KK[T_1, \ldots , T_6, S_1, \ldots, S_m]}
{\langle T_{1}T_{2}+T_{3}^2T_{4}+T_5^2T_{6} \rangle}
\\
\scriptstyle m \ge 1
\end{array}
$
&
\tiny{
\setlength{\arraycolsep}{2pt}
$
\begin{array}{c}
\left[\!\!
\begin{array}{cccccc|ccc}
0 & 2a+1 & a & 1 & a & 1 & 1 & \ldots & 1 
\\ 
1 & 1 & 1 & 0 & 1 & 0 & 0 & \ldots & 0
\end{array}
\!\!\right]
\\[1em]
0 \le 2a < m
\end{array}
$
}
&
\tiny{
\setlength{\arraycolsep}{2pt}
$
\left[\!\!
\begin{array}{c}
2a+m+2
\\
2
\end{array}
\!\!\right]
$
}
&
\small{$m+3$} 
\\
\midrule
6
& 
$
\begin{array}{c}
\frac
{\KK[T_1, \ldots , T_6, S_1,\ldots,S_m]} 
{\langle T_{1}T_{2}+T_{3}T_{4}+T_5^2T_{6} \rangle}
\\
\scriptstyle m \ge 1
\end{array}
$
&
\tiny{
\setlength{\arraycolsep}{2pt}
$
\begin{array}{c}
\left[\!\!
\begin{array}{cccccc|ccc}
0 & 2c+1 & a & b & c & 1 & 1 & \ldots & 1 
\\ 
1 & 1 & 1 & 1 & 1 & 0 & 0 & \ldots & 0
\end{array}
\!\!\right]
\\[1em]
a, b, c \ge 0, \quad a<b,\\
a+b=2c+1,\\
m > 3c+1
\end{array}
$
}
&
\tiny{
\setlength{\arraycolsep}{2pt}
$
\left[\!\!
\begin{array}{c}
3c+2+m
\\
3
\end{array}
\!\!\right]
$
}
&
\small{$m+3$} 
\\
\midrule 
7
&
$
\begin{array}{c}
\frac
{\KK[T_1, \ldots , T_6, S_1,\ldots,S_m]}
{\langle T_{1}T_{2}+T_{3}T_{4}+T_5T_{6} \rangle}
\\
\scriptstyle 1 \le m \le 3
\end{array}
$
&
\tiny{
\setlength{\arraycolsep}{2pt}
$
\left[\!\!
\begin{array}{cccccc|ccc}
0 & 0 & 0 & 0 & -1 & 1 & 1 & \ldots & 1 
\\ 
1 & 1 & 1 & 1 & 1 & 1 & 0 & \ldots & 0
\end{array}
\!\!\right]
$
}
&
\tiny{
\setlength{\arraycolsep}{2pt}
$
\left[\!\! 
\begin{array}{c}
m
\\
4
\end{array}
\!\!\right]
$
}
&
\small{$m+3$} 
\\
\midrule
8
&
$
\begin{array}{c}
\frac
{\KK[T_1, \ldots , T_6, S_1,\ldots,S_m]}
{\langle T_{1}T_{2}+T_{3}T_{4}+T_5T_{6} \rangle}
\\
\scriptstyle m \ge 2
\end{array}
$
&
\tiny{
\setlength{\arraycolsep}{2pt}
$
\begin{array}{c}
\left[\!\! 
\begin{array}{cccccc|cccc}
0 & 0 & 0 & 0 & 0 & 0 & 1 & 1 & \ldots & 1 
\\
1 & 1 & 1 & 1 & 1 & 1 & 0 & a_2 & \ldots & a_m
\end{array}
\!\!\right]
\\[1em]
0 \le a_2 \le \ldots \le a_m, 
\\
a_m\in\{1,2,3\},
\\
4+\sum_{k=2}^m a_k > ma_m
\end{array}
$
}
&
\tiny{
\setlength{\arraycolsep}{2pt}
$
\left[\!\! 
\begin{array}{c}
m
\\
4+\sum_{k=2}^m a_k
\end{array}
\!\!\right]
$
}
&
\small{$m+3$} 
\\
\midrule
9
&
$
\begin{array}{c}
\frac
{\KK[T_1, \ldots , T_6, S_1, \ldots, S_m]}
{\langle T_{1}T_{2}+T_{3}T_{4}+T_5T_{6} \rangle}
\\
\scriptstyle m \geq 2
\end{array}
$
&
\tiny{
\setlength{\arraycolsep}{2pt}
$
\begin{array}{c}
\left[\!\! 
\begin{array}{cccc|ccc}
0 & a_2 & \ldots & a_6 & 1 & \ldots & 1 
\\ 
1 & 1 & \ldots & 1 & 0 & \ldots & 0
\end{array}
\!\!\right]
\\[1em]
0 \le a_3 \le a_5 \le a_6 \le a_4 \le a_2,\\
 a_2=a_3+a_4=a_5+a_6, \\ 
2a_2 < m
\end{array}
$
}
&
\tiny{
\setlength{\arraycolsep}{2pt}
$
\left[\!\! 
\begin{array}{c}
2a_2+m
\\
4
\end{array}
\!\!\right]
$
}
&
\small{$m+3$} 
\\
\midrule
10
& 
$
\begin{array}{c}
\frac
{\KK[T_1, \ldots , T_5, S_1,\ldots,S_m]}
{\langle T_{1}T_{2}+T_{3}T_{4}+T_{5}^2 \rangle}
\\
\scriptstyle 1 \le m \le 2
\end{array}
$
&
\tiny{
\setlength{\arraycolsep}{2pt}
$
\left[\!\! 
\begin{array}{ccccc|ccc}
1 & 1 & 1 & 1 & 1 & 0 & \ldots & 0 
\\ 
-1 & 1 & 0 & 0 & 0 & 1 & \ldots & 1
\end{array}
\!\!\right]
$
}
&
\tiny{
\setlength{\arraycolsep}{2pt}
$
\left[\!\! 
\begin{array}{c}
3
\\
m
\end{array}
\!\!\right]
$
}
&
\small{$m+2$} 
\\
\midrule
11
&
$
\begin{array}{c}
\frac
{\KK[T_1, \ldots , T_5, S_1,\ldots,S_m]}
{\langle T_{1}T_{2}+T_{3}T_{4}+T_{5}^2 \rangle}
\\
\scriptstyle m \geq 2
\end{array}
$
&
\tiny{
\setlength{\arraycolsep}{2pt}
$
\begin{array}{c}
\left[\!\! 
\begin{array}{ccccc|cccc}
1 & 1 & 1 & 1 & 1 & 0 & a_2 & \ldots & a_m 
\\ 
0 & 0 & 0 & 0 & 0 & 1 & 1 & \ldots & 1
\end{array}
\!\!\right]
\\[1em]
0\le a_2 \le \ldots \le a_m,
\\
a_m\in\{1,2\},
\\
~3+\sum_{k=2}^m a_k > ma_m
\end{array}
$
}
&
\tiny{
\setlength{\arraycolsep}{2pt}
$
\left[\!\! 
\begin{array}{c}
3+ \sum_{k=2}^m a_k
\\
m
\end{array}
\!\!\right]
$
}
&
\small{$m+2$}
\\
\midrule 
12
&
$
\begin{array}{c}
\frac{\KK[T_1, \ldots , T_5, S_1,\ldots,S_m]}
{\langle T_{1}T_{2}+T_{3}T_{4}+T_{5}^2 \rangle}
\\
\scriptstyle m \geq 2
\end{array}
$
&
\tiny{
\setlength{\arraycolsep}{2pt}
$
\begin{array}{c}
\left[\!\!
\begin{array}{ccccc|cccc}
1 & 1 & 1 & 1 & 1 & 0 & 0 & \ldots & 0 
\\ 
0 & 2c & a & b & c & 1 & 1 & \ldots & 1
\end{array}
\!\!\right]
\\[1em]
0 \le a \le c \le b,  \ a+b=2c, \\
3c<m
\end{array}
$
}
&
\tiny{
\setlength{\arraycolsep}{2pt}
$
\left[\!\! 
\begin{array}{c}
3
\\
3c+m
\end{array}
\!\!\right]
$
}
&
\small{$m+2$} 
\\
\midrule
13
&
$
\begin{array}{c}
\frac
{\KK[T_1, \ldots , T_8]}
{
\left\langle 
\begin{array}{l}
\scriptstyle T_{1}T_{2}+T_{3}T_{4}+T_5T_{6},
\\[-3pt]
\scriptstyle \lambda T_{3}T_{4}+T_{5}T_{6}+T_{7}T_{8} 
\end{array}
\right\rangle
}
\\
\scriptstyle \lambda \in \KK^*  \setminus \{1\}
\end{array}
$
&
\tiny{
\setlength{\arraycolsep}{2pt}
$
\left[\!\! 
\begin{array}{cccccccc}
1 & 0 & 1 & 0 & 1 & 0 & 1 & 0 
\\ 
0 & 1 & 0 & 1 & 0 & 1 & 0 & 1
\end{array}
\!\!\right]
$
}
&
\tiny{
\setlength{\arraycolsep}{2pt}
$
\left[\!\! 
\begin{array}{c}
2
\\
2
\end{array}
\!\!\right]
$
}
&
\small{$4$}
\\
\bottomrule
\end{longtable}
}
}
\noindent
Moreover, each of the listed data defines 
a smooth rational non-toric 
Fano variety of Picard number two 
coming with a torus action of complexity one.
\end{theorem}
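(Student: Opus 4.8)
The plan is to deduce Theorem~\ref{thm:main2} from Theorem~\ref{thm:main1} together with the Cox-ring adjunction formula. By Theorem~\ref{thm:main1}, every $X$ as in the assertion occurs in the displayed list of thirteen families; for each of these $\Cl(X)=\ZZ^{2}$ is free and $\cox(X)$ is a complete intersection, so the anticanonical class is computed by adjunction: writing $\cox(X)=\KK[T_1,\ldots,T_r,S_1,\ldots,S_m]/\langle g_1,\ldots,g_s\rangle$ with $g_1,\ldots,g_s$ a regular sequence, one has
\[
-\mathcal{K}_X \;=\; \sum_{i}\deg(T_i)+\sum_{j}\deg(S_j)-\sum_{k}\deg(g_k) \qquad\text{in } \Cl(X);
\]
see~\cite{ArDeHaLa}. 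Carrying this out for each family is a direct computation; for instance it gives $-\mathcal{K}_X=(3,4)$ for family~No.~1 (independently of $a,b$) and $-\mathcal{K}_X=(2,2)$ for family~No.~13.

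First I would determine, for each family and in terms of the parameters, the ample cone $\Ample(X)\subseteq\Cl_{\QQ}(X)=\QQ^{2}$. For this I would use that $X$ is the GIT quotient of its total coordinate space attached to the ample class $u$ recorded in Theorem~\ref{thm:main1}: the cone $\mathrm{Nef}(X)$ is the closure of the GIT chamber through $u$, i.e.\ the intersection of all cones $\cone(w_i : i\in I)$, $I\subseteq\{1,\ldots,r+m\}$, that contain $u$, and $\Ample(X)$ is its relative interior. Since the weight matrix $[w_1,\ldots,w_{r+m}]$ is given explicitly, this is an elementary computation of a two-dimensional cone, although it is parameter-dependent and hence splits into several subcases governed by inequalities among the parameters; typically $\mathrm{Nef}(X)$ is spanned by two rays whose slopes are simple expressions in the parameters.

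The Fano condition is then simply $-\mathcal{K}_X\in\Ample(X)$, that is, strict membership in the GIT chamber through $u$. Substituting the expressions found above turns this into a system of strict linear inequalities in the parameters $a,b,c,l_i,m,a_k,\ldots$ of each family; solving these systems produces exactly the rows of the table in Theorem~\ref{thm:main2}. In particular many of the a priori unbounded ranges --- most visibly those of $m$ --- become finite, and family~No.~4 breaks up according to which of its three monomial exponents may be nonzero, yielding the normal forms~4.A,~4.B,~4.C. Conversely, every row of the Theorem~\ref{thm:main2} table is a specialization of a Theorem~\ref{thm:main1} row, hence already defines a smooth rational non-toric projective variety of Picard number two with a torus action of complexity one, and its anticanonical class --- computed directly from the listed data --- has been arranged to lie in the interior of $\Ample(X)$, so the variety is Fano; the ``precisely one'' assertion is inherited from Theorem~\ref{thm:main1}.

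The main obstacle is the uniform bookkeeping in the first two steps: the ample-cone determination and the ensuing inequality analysis must be carried through for all thirteen families while correctly tracking the subcase divisions, and --- decisively for a classification of Fano rather than merely almost Fano varieties --- the distinction between the relative interior and the boundary of $\mathrm{Nef}(X)$. A minor additional point is that family~No.~13 has a codimension-two complete intersection Cox ring, so adjunction must be applied with both relation degrees subtracted.
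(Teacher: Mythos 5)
Your proposal is correct and follows essentially the same route as the paper: the authors prove Theorems~\ref{thm:main1}, \ref{thm:main2} and~\ref{thm:main3} in a single case analysis, computing in each case the anticanonical class via the complete-intersection adjunction formula (Remark~\ref{rem:fanoRAP}, i.e.\ sum of generator degrees minus relation degrees) and the semiample cone as the GIT chamber of $u$ (Remark~\ref{rem:divcones}), and then reading off the Fano condition $-\mathcal{K}_X\in\SAmple(X)^\circ$ as a system of inequalities in the parameters. Your plan merely reorganizes this as a second pass over the table of Theorem~\ref{thm:main1}, which is a legitimate and equivalent presentation.
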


For $\KK = \CC$, the assumption 
of rationality can be omitted in
Theorem~\ref{thm:main2}
due to~\cite[Sec.~2.1]{ProkhorovFano}
and~\cite[Rem.~4.4.1.5]{ArDeHaLa}.
A closer look to the varieties of 
Theorem~\ref{thm:main2} 
reveals that they all are obtained
from a series of lower dimensional 
varieties via iterating the following 
procedure: we take a certain $\PP_1$-bundle 
over the given variety, 
apply a natural series of flips and then 
contract a prime divisor.
In terms of Cox rings, this generalized 
cone construction simply means duplicating 
a free weight, i.e., given a variable
not showing up in the defining relations,
one adds a further one of the same degree,
see Section~\ref{section:finite}.
Proposition~\ref{prop:noisodivs} and 
Theorem~\ref{thm:duplicate}
then yield the following.

\goodbreak

\begin{corollary}
\label{cor:duplicate}
Every smooth rational non-toric Fano variety 
with a torus action of complexity one and Picard 
number two arises via iterated 
duplication of a free weight from a smooth rational 
projective (not necessarily Fano)
variety with a torus action of complexity 
one, Picard number two and dimension at most seven.
\end{corollary}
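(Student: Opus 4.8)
The plan is to extract the statement from the two explicit classifications. Let $X$ be a smooth rational non-toric Fano variety of Picard number two carrying a torus action of complexity one. By Theorem~\ref{thm:main2}, $X$ is isomorphic to exactly one entry of the second table, and fixing such an isomorphism exhibits $X$, with the very same Cox ring and grading matrix, as a member of the list of Theorem~\ref{thm:main1}: the Fano entries are obtained from the projective ones by imposing extra numerical conditions on the parameters, and Nos.~4.A, 4.B, 4.C are the instances $(l_2,l_4,l_6)\in\{(1,1,1),(2,1,1),(2,2,2)\}$ of entry No.~4. In each entry the variables $S_1,\dots,S_m$ do not occur in the defining relation(s) and are thus free weights in the sense of the duplication construction of Section~\ref{section:finite}.

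I would then run the de-duplication recursion on this fixed presentation. If $\dim X\le 7$ there is nothing to prove: $X$ is already an entry of Theorem~\ref{thm:main1}, whose concluding clause asserts that each listed datum defines a smooth rational non-toric projective variety of Picard number two with a complexity-one torus action. If $\dim X>7$, I claim the presentation has two distinct free generators of the same degree. Granting this, deleting one of them --- that is, dropping one variable $S_j$ together with its column of the grading matrix --- produces, by Theorem~\ref{thm:duplicate} together with Proposition~\ref{prop:noisodivs}, a variety $X'$ that again occurs in the list of Theorem~\ref{thm:main1}, in the corresponding entry with the free-weight count $m$ lowered by one, of which $X$ is the duplication of a free weight, and with $\dim X'=\dim X-1$. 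Iterating until the dimension has dropped to at most seven, we obtain a variety $X_0$ in the list of Theorem~\ref{thm:main1} with $\dim X_0\le 7$ such that $X$ is an iterated duplication of $X_0$. Since $X_0$ is smooth, rational, projective, non-toric, of Picard number two and carries a complexity-one torus action, this is exactly the assertion of the corollary.

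The remaining point --- and the only place where any real work occurs --- is the claim that $\dim X>7$ forces a repeated free weight and that this property survives deletion. For Nos.~1, 2, 3, 13 we have $\dim X\in\{3,4\}$, and for Nos.~7, 10 the table bounds $m$ so that $\dim X\le 6$; in these cases $\dim X>7$ never arises. For Nos.~5, 6, 9, 12 and 4.A, 4.B, 4.C all free generators $S_j$ carry a single common degree (one of $(1,0)$, $(0,1)$, $(1,1)$, according to the table), so $m\ge 2$ --- in particular $\dim X>7$ --- already yields a repetition, plainly preserved by deletion. The genuine cases are Nos.~8 and~11, where $\deg(S_1)=(1,0)$ (resp.\ $(0,1)$) and $\deg(S_j)=(1,a_j)$ (resp.\ $(a_j,1)$) with $0\le a_2\le\dots\le a_m$ and $a_m\le 3$ (resp.\ $a_m\le 2$); since deletion only shrinks the multiset $\{a_2,\dots,a_m\}$, these bounds persist along the recursion. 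If $a_2=0$ then $\deg(S_1)=\deg(S_2)$; if $a_2\ge 1$, then $a_2,\dots,a_m$ are $m-1$ integers in a set of size at most three (resp.\ two), and as $\dim X>7$ forces $m\ge 5$ (resp.\ $m\ge 6$) the pigeonhole principle gives $i\ne j$ with $a_i=a_j$, hence $\deg(S_i)=\deg(S_j)$. In either situation a free weight can be removed, and since this leaves the defining relation untouched and the parameter inequalities of the corresponding entry of Theorem~\ref{thm:main1} still satisfied, the recursion does stay inside that list. The main obstacle is therefore purely organizational: carrying out this inspection uniformly across all thirteen entries; the mathematical content is already packaged in Theorems~\ref{thm:main2} and~\ref{thm:duplicate}.
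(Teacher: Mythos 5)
Your argument is correct and follows essentially the same route as the paper: the paper deduces the corollary from Proposition~\ref{prop:noisodivs} (the $m=0$ cases, all of dimension at most four) together with Theorem~\ref{thm:duplicate}, whose own proof likewise inspects the table of Theorem~\ref{thm:main2} and de-duplicates repeated free weights, treating Nos.~8 and~11 via an explicit description of the admissible tuples $(a_2,\dots,a_m)$ where you substitute a pigeonhole count. One minor inaccuracy: in No.~4.A with $c=-1$ the generator $S_1$ has degree $(-1,1)\ne(0,1)$, so the free weights do not all coincide there; this is harmless for your purposes, since $\dim X>7$ still forces at least four generators of degree $(0,1)$, but the blanket claim for No.~4.A should be adjusted (and the appeal to Theorem~\ref{thm:duplicate} in your deletion step is redundant once you argue the repetition directly).
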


Note that we cannot expect such a statement 
in general: 
Remark~\ref{rem:toric-not} shows that
the smooth toric Fano varieties of Picard 
number two do not allow a bound~$d$ such that
they all arise via iterated duplication of 
free weights from smooth varieties of dimension 
at most $d$.

Similar to the Fano varieties, we can figure out 
the almost Fano varieties from Theorem~\ref{thm:main1}, 
i.e., those with a big and nef anticanonical divisor.
In general, i.e., without the assumption of a torus 
action, the classification of smooth almost Fano 
varieties of Picard number two is widely open;
for the threefold case, we refer to the work of 
Jahnke, Peternell and Radloff~\cite{JaPeRa1,JaPeRa2}. 
In the setting of a torus action of complexity 
one, the following result together with 
Theorem~\ref{thm:main2} settles the problem in any 
dimension; by a \emph{truly almost Fano variety} we 
mean an almost Fano variety which is not Fano.

\begin{theorem}
\label{thm:main3}
Every smooth rational projective non-toric 
truly almost Fano variety of Picard number 
two that admits a 
torus action of complexity one 
is isomorphic to precisely one of the following 
varieties $X$, specified by their Cox ring 
$\cox(X)$ and an ample class $u \in \Cl(X)$,
where we always have $\Cl(X) = \ZZ^2$ and the grading 
is fixed by the matrix $[w_1, \ldots ,w_r]$
of generator degrees $\deg(T_i), \deg(S_j) \in \Cl(X)$.

\medskip

{\centering
{\small
\setlength{\tabcolsep}{4pt}
\begin{longtable}{ccccc}
No.
&
\small{$\mathcal{R}(X)$}
&
\small{$[w_1,\ldots, w_r]$}
&
\small{$u$}
&
\small{$\dim(X)$}
\\
\toprule
4.A
&
$
\begin{array}{c}
\frac
{\KK[T_1, \ldots , T_6, S_1,\ldots,S_m]}
{\langle T_{1}T_{2}+T_{3}T_{4}+T_5T_{6} \rangle}
\\
\scriptstyle m \ge 1
\end{array}
$
&
\tiny{
\setlength{\arraycolsep}{2pt}
$
\begin{array}{c}
\left[\!\!
\begin{array}{cccccc|ccc}
0 & 1 & 0 & 1 & 0 & 1 & c_1 & \ldots & c_m
\\ 
1 & 0 & 1 & 0 & 1 & 0 & 1 &  \ldots & 1
\end{array}
\!\!\right]
\\[1em]
c_1 \le \ldots \le c_m \\
d \sei \max(0,c_m) \\
(2+m)d = 2 + c_1 +\dots +c_m
\end{array}
$
}
&
\tiny{
\setlength{\arraycolsep}{2pt}
$
\left[\!\!
\begin{array}{c}
1
\\
1+d
\end{array}
\!\!\right]
$
}
&
\small{$m+3$}
\\
\midrule
4.B
&
$
\begin{array}{c}
\frac
{\KK[T_1, \ldots , T_6, S_1,\ldots,S_m]}
{\langle T_{1}T_{2}^{2}+T_{3}T_{4}+T_5T_{6} \rangle}
\\
\scriptstyle m \ge 1
\end{array}
$
&
\tiny{
\setlength{\arraycolsep}{2pt}
$
\left[\!\!
\begin{array}{cccccc|cccc}
0 & 1 & 1 & 1 & 1 & 1 & 0 & 1 & \ldots & 1
\\ 
1 & 0 & 1 & 0 & 1 & 0 & 1 & 1 & \ldots & 1
\end{array}
\!\!\right]
$
}
&
\tiny{
\setlength{\arraycolsep}{2pt}
$
\left[\!\!
\begin{array}{c}
1
\\
2
\end{array}
\!\!\right]
$
}
&
\small{$m+3$}
\\
\midrule
4.C
&
$
\begin{array}{c}
\frac
{\KK[T_1, \ldots , T_6, S_1,\ldots,S_m]}
{\langle T_{1}T_{2}^{2}+T_{3}T_{4}^{2}+T_5T_{6}^{2} \rangle}
\\
\scriptstyle m \ge 1
\end{array}
$
&
\tiny{
\setlength{\arraycolsep}{2pt}
$
\left[\!\!
\begin{array}{cccccc|cccc}
0 & 1 & 0 & 1 & 0 & 1 & -1 & 0 & \ldots & 0
\\ 
1 & 0 & 1 & 0 & 1 & 0 & 1 & 1 & \ldots & 1
\end{array}
\!\!\right]
$
}
&
\tiny{
\setlength{\arraycolsep}{2pt}
$
\left[\!\!
\begin{array}{c}
1
\\
1
\end{array}
\!\!\right]
$
}
&
\small{$m+3$}
\\
\midrule
4.D
&
$
\begin{array}{c}
\frac
{\KK[T_1, \ldots , T_6, S_1,\ldots,S_m]}
{\langle T_{1}T_{2}^{2}+T_{3}T_{4}^{2}+T_5T_{6} \rangle}
\\
\scriptstyle m \ge 0
\end{array}
$
&
\tiny{
\setlength{\arraycolsep}{2pt}
$
\left[\!\!
\begin{array}{cccccc|ccc}
0 & 1 & 0 & 1 & 1 & 1 & 1  & \ldots & 1
\\ 
1 & 0 & 1 & 0 & 1 & 0 & 1  & \ldots & 1
\end{array}
\!\!\right]
$
}
&
\tiny{
\setlength{\arraycolsep}{2pt}
$
\left[\!\!
\begin{array}{c}
1
\\
2
\end{array}
\!\!\right]
$
}
&
\small{$m+3$}
\\
\midrule
4.E
&
$
\begin{array}{c}
\frac
{\KK[T_1, \ldots , T_6, S_1,\ldots,S_m]}
{\langle T_{1}T_{2}^{3}+T_{3}T_{4}+T_5T_{6} \rangle}
\\
\scriptstyle m \ge 0
\end{array}
$
&
\tiny{
\setlength{\arraycolsep}{2pt}
$
\left[\!\!
\begin{array}{cccccc|ccc}
0 & 1 & 2 & 1 & 2 & 1 & 2  & \ldots & 2
\\ 
1 & 0 & 1 & 0 & 1 & 0 & 1  & \ldots & 1
\end{array}
\!\!\right]
$
}
&
\tiny{
\setlength{\arraycolsep}{2pt}
$
\left[\!\!
\begin{array}{c}
1
\\
3
\end{array}
\!\!\right]
$
}
&
\small{$m+3$}
\\
\midrule
4.F
&
$
\begin{array}{c}
\frac
{\KK[T_1, \ldots , T_6, S_1,\ldots,S_m]}
{\langle T_{1}T_{2}^{3}+T_{3}T_{4}^{2}+T_5T_{6}^{2} \rangle}
\\
\scriptstyle m \ge 0
\end{array}
$
&
\tiny{
\setlength{\arraycolsep}{2pt}
$
\left[\!\!
\begin{array}{cccccc|ccc}
0 & 1 & 1 & 1 & 1 & 1 & 1  & \ldots & 1
\\ 
1 & 0 & 1 & 0 & 1 & 0 & 1  & \ldots & 1
\end{array}
\!\!\right]
$
}
&
\tiny{
\setlength{\arraycolsep}{2pt}
$
\left[\!\!
\begin{array}{c}
1
\\
2
\end{array}
\!\!\right]
$
}
&
\small{$m+3$}
\\
\midrule
5
&
$
\begin{array}{c}
\frac
{\KK[T_1, \ldots , T_6, S_1, \ldots, S_m]}
{\langle T_{1}T_{2}+T_{3}^2T_{4}+T_5^2T_{6} \rangle}
\\
\scriptstyle m \ge 0
\end{array}
$
&
\tiny{
\setlength{\arraycolsep}{2pt}
$
\begin{array}{c}
\left[\!\!
\begin{array}{cccccc|ccc}
0 &  2a+1 & a & 1 & a & 1 & 1 & \ldots & 1 
\\ 
1 & 1 & 1 & 0 & 1 & 0 & 0 & \ldots & 0
\end{array}
\!\!\right]
\\[1em]
m=2a
\end{array}
$
}
&
\tiny{
\setlength{\arraycolsep}{2pt}
$
\left[\!\!
\begin{array}{c}
m+2
\\
1
\end{array}
\!\!\right]
$
}
&
\small{$m+3$} 
\\
\midrule 
6
&
$
\begin{array}{c}
\frac
{\KK[T_1, \ldots , T_6, S_1,\ldots,S_m]} 
{\langle T_{1}T_{2}+T_{3}T_{4}+T_5^2T_{6} \rangle}
\\
\scriptstyle m \ge 1
\end{array}
$
&
\tiny{
\setlength{\arraycolsep}{2pt}
$
\begin{array}{c}
\left[\!\!
\begin{array}{cccccc|ccc}
0 & 2c+1 & a & b & c & 1 & 1 & \ldots & 1 
\\ 
1 & 1 & 1 & 1 & 1 & 0 & 0 & \ldots & 0
\end{array}
\!\!\right]
\\[1em]
a, b, c \ge 0, \quad a<b, \\
a+b=2c+1, \\
m=3c+1
\end{array}
$
}
&
\tiny{
\setlength{\arraycolsep}{2pt}
$
\left[\!\!
\begin{array}{c}
2c+2
\\
1
\end{array}
\!\!\right]
$
}
&
\small{$m+3$} 
\\
\midrule 
7
&
$
\begin{array}{c}
\frac
{\KK[T_1, \ldots , T_6, S_1,\ldots,S_m]}
{\langle T_{1}T_{2}+T_{3}T_{4}+T_5T_{6} \rangle}
\\
\scriptstyle m = 4
\end{array}
$
&
\tiny{
\setlength{\arraycolsep}{2pt}
$
\left[\!\!
\begin{array}{cccccc|cccc}
0 & 0 & 0 & 0 & -1 & 1 & 1 & 1 & 1 & 1 
\\ 
1 & 1 & 1 & 1 & 1 & 1 & 0 & 0 & 0 & 0
\end{array}
\!\!\right]
$
}
&
\tiny{
\setlength{\arraycolsep}{2pt}
$
\left[\!\! 
\begin{array}{c}
1
\\
2
\end{array}
\!\!\right]
$
}
&
\small{$7$} 
\\
\midrule
8
&
$
\begin{array}{c}
\frac
{\KK[T_1, \ldots , T_6, S_1,\ldots,S_m]}
{\langle T_{1}T_{2}+T_{3}T_{4}+T_5T_{6} \rangle}
\\
\scriptstyle m \ge 2
\end{array}
$
&
\tiny{
\setlength{\arraycolsep}{2pt}
$
\begin{array}{c}
\left[\!\! 
\begin{array}{cccccc|cccc}
0 & 0 & 0 & 0 & 0 & 0 & 1 & 1 & \ldots & 1 
\\
1 & 1 & 1 & 1 & 1 & 1 & 0 & a_2 & \ldots & a_m
\end{array}
\!\!\right]
\\[1em]
0 \le a_2 \le \ldots \le a_m, ~a_m >0, \\
4 + a_2 + \ldots + a_m = ma_m
\end{array}
$
}
&
\tiny{
\setlength{\arraycolsep}{2pt}
$
\left[\!\! 
\begin{array}{c}
1
\\
a_m+1
\end{array}
\!\!\right]
$
}
&
\small{$m+3$}
\\
\midrule
9
&
$
\begin{array}{c}
\frac
{\KK[T_1, \ldots , T_6, S_1, \ldots, S_m]}
{\langle T_{1}T_{2}+T_{3}T_{4}+T_5T_{6} \rangle}
\\
\scriptstyle m \geq 2
\end{array}
$
&
\tiny{
\setlength{\arraycolsep}{2pt}
$
\begin{array}{c}
\left[\!\! 
\begin{array}{cccc|ccc}
0 & a_2 & \ldots & a_6 & 1 & \ldots & 1 
\\ 
1 & 1 & \ldots & 1 & 0 & \ldots & 0
\end{array}
\!\!\right]
\\[1em]
0 \le a_3 \le a_5 \le a_6 \le a_4 \le a_2,\\
 a_2=a_3+a_4=a_5+a_6, \\
m = 2a_2
\end{array}
$
}
&
\tiny{
\setlength{\arraycolsep}{2pt}
$
\left[\!\! 
\begin{array}{c}
a_2+1
\\
1
\end{array}
\!\!\right]
$
}
&
\small{$m+3$} 
\\
\midrule 
10
&
$
\begin{array}{c}
\frac
{\KK[T_1, \ldots , T_5, S_1,\ldots,S_m]}
{\langle T_{1}T_{2}+T_{3}T_{4}+T_{5}^2 \rangle}
\\
\scriptstyle m = 3
\end{array}
$
&
\tiny{
\setlength{\arraycolsep}{2pt}
$
\left[\!\! 
\begin{array}{ccccc|ccc}
1 & 1 & 1 & 1 & 1 & 0 & 0 & 0 
\\ 
-1 & 1 & 0 & 0 & 0 & 1 & 1 & 1
\end{array}
\!\!\right]
$
}
&
\tiny{
\setlength{\arraycolsep}{2pt}
$
\left[\!\! 
\begin{array}{c}
2
\\
1
\end{array}
\!\!\right]
$
}
&
\small{$5$} 
\\
\midrule
11
&
$
\begin{array}{c}
\frac
{\KK[T_1, \ldots , T_5, S_1,\ldots,S_m]}
{\langle T_{1}T_{2}+T_{3}T_{4}+T_{5}^2 \rangle}
\\
\scriptstyle m \geq 2
\end{array}
$
&
\tiny{
\setlength{\arraycolsep}{2pt}
$
\begin{array}{c}
\left[\!\! 
\begin{array}{ccccc|cccc}
1 & 1 & 1 & 1 & 1 & 0 & a_2 & \ldots & a_m 
\\ 
0 & 0 & 0 & 0 & 0 & 1 & 1 & \ldots & 1
\end{array}
\!\!\right]
\\[1em]
0\le a_2 \le \ldots \le a_m, ~ a_m>0, \\
3 + a_2 + \ldots + a_m = ma_m
\end{array}
$
}
&
\tiny{
\setlength{\arraycolsep}{2pt}
$
\left[\!\! 
\begin{array}{c}
1
\\
a_m+1
\end{array}
\!\!\right]
$
}
&
\small{$m+2$}
\\
\midrule
12
& 
$
\begin{array}{c}
\frac{\KK[T_1, \ldots , T_5, S_1,\ldots,S_m]}
{\langle T_{1}T_{2}+T_{3}T_{4}+T_{5}^2 \rangle}
\\
\scriptstyle m \geq 3
\end{array}
$
&
\tiny{
\setlength{\arraycolsep}{2pt}
$
\begin{array}{c}
\left[\!\!
\begin{array}{ccccc|cccc}
1 & 1 & 1 & 1 & 1 & 0 & 0 & \ldots & 0 
\\ 
0 & 2c & a & b & c & 1 & 1 & \ldots & 1
\end{array}
\!\!\right]
\\[1em]
0 \le a \le c \le b, \ a+b=2c, \\
m=3c
\end{array}
$
}
&
\tiny{
\setlength{\arraycolsep}{2pt}
$
\left[\!\! 
\begin{array}{c}
1
\\
2c+1
\end{array}
\!\!\right]
$
}
&
\small{$m+2$} 
\\
\bottomrule
\end{longtable}
}
}
\noindent
Moreover, each of the listed data defines 
a smooth rational non-toric 
truly almost Fano variety of Picard number two 
coming with a torus action of complexity one.
\end{theorem}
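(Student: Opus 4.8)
The plan is to sieve the complete list of Theorem~\ref{thm:main1}: for each of its thirteen families we compute the anticanonical class $-\mathcal{K}_X$ from the defining data and then decide how it lies relative to the cones of divisor classes attached to~$X$. Every $X$ arising in Theorem~\ref{thm:main1} is a Mori dream space with $\Cl(X) = \ZZ^2$; its effective cone $\Eff(X) = \cone(w_1, \ldots, w_r) \subseteq \QQ^2$ is generated by the degrees $w_i$ of the Cox ring generators $T_i, S_j$, and its ample cone $\Ample(X)$ is the GIT-chamber containing the class $u$ listed in the table, whose closure is the nef cone $\Nef(X) = \SAmple(X)$; by the general rank-two picture of Mori dream spaces (see \cite{HaHe:2013,ArDeHaLa}), $\Nef(X)$ is a two-dimensional cone both of whose extremal rays occur among the rays $\QQ_{\geq 0} w_i$. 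Since $\cox(X)$ is a complete intersection with relations $g_1, \ldots, g_s$ (with $s = 1$ throughout, except $s = 2$ for No.~13), adjunction on the total coordinate space gives $-\mathcal{K}_X = w_1 + \cdots + w_r - \deg(g_1) - \cdots - \deg(g_s)$ in $\Cl(X)$. Hence $X$ is truly almost Fano precisely when $-\mathcal{K}_X$ spans one of the two extremal rays of $\Nef(X)$, so that it is nef but not ample, and at the same time lies in the interior of $\Eff(X)$, so that it is big.

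The actual sieve goes as follows. For Nos.~1, 2, 3 and~13 of Theorem~\ref{thm:main1} the class $-\mathcal{K}_X$ does not depend on the parameters and is never parallel to any weight $w_i$; therefore it never meets an extremal ray of $\Nef(X)$, which is why these items contribute nothing here. For each of the remaining families one writes $-\mathcal{K}_X$ out explicitly in the parameters, reads off the two extremal rays of $\Nef(X)$ by sorting $w_1, \ldots, w_r$ by slope and selecting the chamber of $u$, and then imposes the two conditions just stated. Each condition turns into a system of linear (in)equalities in the parameters $a, b, c, l_i, a_i, c_i, m$, and solving the combined systems yields exactly the entries of the table: the splitting of No.~4 into $4.\mathrm{A}$--$4.\mathrm{F}$, together with the case distinctions in the other families, reflects the choice of which of the two walls of $\Nef(X)$ the class $-\mathcal{K}_X$ meets and certain coincidences among the parameters; in particular, the bounds on the exponents $l_i$ forced in $4.\mathrm{B}$--$4.\mathrm{F}$ come from demanding that $-\mathcal{K}_X$ be big and nef simultaneously.

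For the converse direction, each datum in the table is a parameter specialization of a family of Theorem~\ref{thm:main1}, so it already defines a smooth rational non-toric projective variety of Picard number two with a torus action of complexity one; the arithmetic relations we impose are exactly those making $-\mathcal{K}_X$ nef, big and non-ample, so each such $X$ is truly almost Fano. That the list is repetition-free, and disjoint from the Fano list of Theorem~\ref{thm:main2}, follows from the uniqueness part of Theorem~\ref{thm:main1} once one verifies that the chosen parameter normalizations are irredundant.

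\textbf{The main obstacle} is organizational rather than conceptual: one must treat all thirteen families with their several integer parameters, in each case correctly identify which two weight rays bound $\Nef(X)$ — here the free weights attached to the variables $S_j$ can move an extremal ray — and then enumerate, exhaustively and without overlap, the finitely many parameter values for which $-\mathcal{K}_X$ lands on a wall of $\Nef(X)$ while staying in the interior of $\Eff(X)$. Keeping these case distinctions mutually exclusive and complete, and matching them against the normalizations used in Theorem~\ref{thm:main1}, is the delicate point; the individual (in)equalities themselves are routine.
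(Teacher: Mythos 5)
Your proposal is correct and follows essentially the same route as the paper: the paper also computes $-\mathcal{K}_X$ via the complete-intersection formula of Remark~\ref{rem:fanoRAP} together with $\SAmple(X)$ from the weight data, and extracts the truly almost Fano cases as the equality (boundary-of-nef) instances of the Fano inequalities, merely interleaving this computation with the case analysis proving Theorem~\ref{thm:main1} rather than running it as a separate sieve afterwards. Your explicit insistence on $-\mathcal{K}_X$ lying in the interior of $\Eff(X)$ is a point worth keeping, since it is exactly what excludes corner cases such as $l_2=l_4=l_6=3$ with all $c_k=0$ in family No.~4, where the nef-wall equality holds but $-\mathcal{K}_X$ fails to be big.
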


The article is organized as follows.
In Section~\ref{sec:cpl1}, we briefly
present the necessary background on
rational varieties $X$ with a torus action 
of complexity one.
In Section~\ref{sec:firstStruct}, we 
derive first constraints on the 
defining data for smooth $X$ of 
Picard number two.
Section~\ref{sec:classif} is devoted
to proving the main results.
In Section~\ref{section:finite}, we
introduce and discuss duplication of free 
weights and show how to obtain the 
Fano varieties of Theorem~\ref{thm:main2}
via this procedure from lower dimensional 
varieties.
Finally, in Section~\ref{sec:geomfanos}, we 
describe the Fano varieties of Theorem~\ref{thm:main2}
in more geometric terms. 

\goodbreak

We would like to thank Ivo Radloff for his 
interest in the subject and for helpful 
discussions.

\tableofcontents


\section{Varieties with torus action of complexity one}
\label{sec:cpl1}

We recall from~\cite{HaSu:2010,HaHe:2013, ArDeHaLa} 
the Cox ring based approach to normal (projective) 
rational varieties~$X$ with a torus action of 
complexity one and thereby fix the notation 
used throughout the article. 
The first step is to describe the possible 
Cox rings~$\mathcal{R}(X)$; they are encoded 
by a pair $(A,P)$ of matrices of the following
shape.

\begin{notation}
\label{constr:defdata}
Fix $r \in \ZZ_{\ge 1}$, a sequence 
$n_0, \ldots, n_r \in \ZZ_{\ge 1}$, set 
$n := n_0 + \ldots + n_r$, and fix  
integers $m \in \ZZ_{\ge 0}$ and $0 < s < n+m-r$.
A pair $(A,P)$ of \emph{defining matrices} 
consists of 
\begin{itemize}
\item 
a matrix $A := [a_0, \ldots, a_r]$ 
with pairwise linearly independent 
column vectors $a_0, \ldots, a_r \in \KK^2$,
\item 
an integral block matrix $P$ of size 
$(r + s) \times (n + m)$, the columns 
of which are pairwise different primitive
vectors generating $\QQ^{r+s}$ as a cone:
\begin{eqnarray*}
P
& = & 
\left[ 
\begin{array}{cc}
L & 0 
\\
d & d'  
\end{array}
\right],
\end{eqnarray*}
where $d$ is an $(s \times n)$-matrix, $d'$ an $(s \times m)$-matrix 
and $L$ an $(r \times n)$-matrix built from tuples 
$l_i := (l_{i1}, \ldots, l_{in_i}) \in \ZZ_{\ge 1}^{n_i}$ 
as follows
\begin{eqnarray*}
L
& = & 
\left[
\begin{array}{cccc}
-l_0 & l_1 &   \ldots & 0 
\\
\vdots & \vdots   & \ddots & \vdots
\\
-l_0 & 0 &\ldots  & l_{r} 
\end{array}
\right].
\end{eqnarray*}
\end{itemize}
Denote by $v_{ij}$, where 
$0 \le i \le r$ and $1 \le j \le n_i$,
the first $n$ columns of $P$
and by~$v_k$, where $1 \le k \le m$,
the last $m$ ones.
Moreover, $e_{ij},e_k \in \ZZ^{n+m}$ 
are the canonical basis vectors indexed 
accordingly, i.e., 
$P$ sends $e_{ij}$ to $v_{ij}$ and $e_k$ to $v_k$. 
\end{notation}

\begin{construction}
\label{constr:RAPdown}
Fix $(A,P)$ as in~\ref{constr:defdata}.
Consider the polynomial ring 
$\KK[T_{ij},S_k]$ in the variables 
$T_{ij}$, where 
$0 \le i \le r$, $1 \le j \le n_i$,
and $S_k$, where $1 \le k \le m$.
For every $0 \le i \le r$, define a monomial
$$
T_i^{l_i} 
\  := \ 
T_{i1}^{l_{i1}} \cdots T_{in_i}^{l_{in_i}}
\ \in \ 
\KK[T_{ij},S_k].
$$
Denote by $\mathfrak{I}$ the set of 
all triples $I = (i_1,i_2,i_3)$ with 
$0 \le i_1 < i_2 < i_3 \le r$ 
and define for any $I \in \mathfrak{I}$ 
a trinomial 
$$
g_I
\ := \
g_{i_1,i_2,i_3}
\ := \
\det
\left[
\begin{array}{ccc}
T_{i_1}^{l_{i_1}} & T_{i_2}^{l_{i_2}} & T_{i_3}^{l_{i_3}}
\\
a_{i_1} & a_{i_2} & a_{i_3}
\end{array}
\right].
$$
Let $P^*$ denote the transpose of $P$,
consider the factor group 
$K := \ZZ^{n+m}/\text{im}(P^*)$
and the projection $Q \colon \ZZ^{n+m} \to K$.
We define a $K$-grading on 
$\KK[T_{ij},S_k]$ by setting
$$ 
\deg(T_{ij}) 
 \ := \ 
w_{ij}
\ := \ 
Q(e_{ij}),
\qquad
\deg(S_{k}) 
 \ := \ 
w_k 
\ := \ 
Q(e_{k}).
$$
Then the trinomials $g_I$ just introduced 
are $K$-homogeneous, all of the same degree.
In particular, we obtain a $K$-graded 
factor ring  
\begin{eqnarray*}
R(A,P)
& := &
\KK[T_{ij},S_k; \; 0 \le i \le r, \, 1 \le j \le n_i, 1 \le k \le m] 
\ / \
\bangle{g_I; \; I \in \mathfrak{I}}.
\end{eqnarray*}
\end{construction}

The rings $R(A,P)$ are precisely those which occur
as Cox rings of normal rational projective 
(or, more generally, complete $A_2$-) varieties with 
a torus action of complexity one; 
see~\cite[Theorem~1.5]{HaHe:2013}.
We recall basic properties.

\begin{remark}
\label{rem:ci}
The $K$-graded ring $R(A,P)$ of Construction~\ref{constr:RAPdown}
is a complete intersection: with $g_i := g_{i,i+1,i+2}$ 
we have 
$$ 
\bangle{g_I; \; I \in \mathfrak{I}}
\ = \ 
\bangle{g_0,\ldots,g_{r-2}},
\qquad\quad
\dim(R(A,P)) \ = \ n+m-(r-1).
$$
\end{remark}

\begin{remark}
\label{remark:admissibleops}
The following operations
on the columns and rows of the defining 
matrix $P$ do not change the isomorphy type
of the graded ring $R(A,P)$; 
we call them \emph{admissible operations}:
\begin{enumerate}
\item
swap two columns inside a block
$v_{ij_1}, \ldots, v_{ij_{n_i}}$,
\item
swap two whole column blocks
$v_{ij_1}, \ldots, v_{ij_{n_i}}$
and $v_{i'j_1}, \ldots, v_{i'j_{n_{i'}}}$,
\item
add multiples of the upper $r$ rows
to one of the last $s$ rows,
\item
any elementary row operation among the last $s$
rows,
\item
swap two columns inside the $d'$ block.
\end{enumerate}
The operations of type~(iii) and~(iv) do 
not even change $R(A,P)$, whereas  
types~(i), (ii), (v) correspond to 
certain renumberings of the variables
of $R(A,P)$ keeping the (graded) isomorphy 
type.
\end{remark}

\begin{remark}
If we have $n_i=1$ and $l_{i1} = 1$ in a defining 
matrix~$P$, then we may eliminate the variable 
$T_{i1}$ in $R(A,P)$ by modifying $P$ appropriately.
This can be repeated until $P$ is 
\emph{irredundant} in the sense that 
$l_{i1} + \ldots + l_{in_i} \ge 2$ holds 
for all $i = 0,\ldots, r$. 
\end{remark}

We come to the construction of all normal 
projective varieties sharing a given $R(A,P)$ 
as their Cox ring.
By $K_\QQ := K \otimes_{\ZZ} \QQ$ we denote
the rational vector space associated to an 
abelian group $K$.
We shortly write $w$ for $w \otimes 1 \in K_\QQ$ 
and, similarly, we keep the symbols when passing 
from homomorphisms $K \to K'$ 
to the associated linear maps $K_\QQ \to K'_\QQ$.  
Moreover, when we speak of a cone
$\tau \subseteq K_\QQ$, then we mean a convex, 
polyhedral cone in $K_\QQ$.
The relative interior of $\tau$ is denoted by 
$\tau^\circ$.

\begin{definition}
The \emph{moving cone} in $K_\QQ$ of the $K$-graded 
ring $R(A,P)$ from Construction~\ref{constr:RAPdown}
is the 
$$ 
\Mov(A,P) 
\ := \ 
\bigcap_{i,j} \cone(Q(e_{uv},e_{t}; \; (u,v) \ne (i,j)))  
\ \cap \ 
\bigcap_{k} \cone(Q(e_{uv},e_{t}; \; t \ne k)).
$$
\end{definition}

\begin{construction}
\label{constr:RAPu}
Take $R(A,P)$ as in Construction~\ref{constr:RAPdown}
and fix $u \in \Mov(A,P)^\circ$.
The $K$-grading on $\KK[T_{ij},S_k]$
defines an action of the quasitorus 
$H := \Spec \; \KK[K]$ on $\ol{Z} := \KK^{n+m}$
leaving $\ol{X} := V(g_I; \; I \in \mathfrak{I}) \subseteq \ol{Z}$ 
invariant. Consider 
$$ 
\wh{Z}
\ := \ 
\{z \in \ol{Z}; \; 
f(z) \ne 0 \text{ for some } 
f \in \KK[T_{ij},S_k]_{ \nu u}, \, 
\nu \in \ZZ_{>0} \}
\ \subseteq \ 
\ol{Z},
$$
the set of $H$-semistable points
with respect to the weight $u$.
Then $\wh{X} := \ol{X} \cap \wh{Z}$ is 
an open $H$-invariant set in $\ol{X}$ 
and we have a commutative diagram
$$ 
\xymatrix{
{\wh{X}}
\ar[r]
\ar[d]_{\quot H}^{\pi}
&
{\wh{Z}}
\ar[d]^{\quot H}
\\
X(A,P,u)
\ar[r]
&
Z}
$$
where $X=X(A,P,u)$ is a variety with torus action of complexity one,
$Z := \wh{Z} \quot H$ is a toric variety,
the downward maps are characteristic spaces and
the lower horizontal arrow is a closed embedding. 
We have
$$ 
\dim(X) = s+1,
\qquad
\Cl(X) \ \cong \ K,
\qquad
\mathcal{R}(X) \ \cong \ R(A,P).
$$
Moreover, for an irredundant defining matrix $P$, the 
variety $X = X(A,P)$ 
is non-toric if and only if $r \ge 2$ holds.
\end{construction}

See~\cite{HaHe:2013,ArDeHaLa} 
for the proof that this construction
yields indeed all normal rational projective varieties with 
a torus action of complexity one.
We will make intensive use of the machinery 
developed in~\cite{BeHa:2007,Ha:2008,ArDeHaLa}. 
Let us briefly summarize the necessary 
notions and statements in a series of remarks 
adapted to our needs.

\begin{remark}
\label{const:rlvu}
Fix defining matrices $(A,P)$ 
and let $\gamma \subseteq \QQ^{n+m}$ 
be the positive orthant,
spanned by the canonical basis vectors 
$e_{ij},e_k \in \ZZ^{n+m}$.
Every face $\gamma_0 \preceq \gamma$ defines a toric orbit 
in $\ol{Z} = \KK^{n+m}$:
$$ 
\ol{Z}(\gamma_0)
\ := \ 
\{
z \in \ol{Z}; \; 
z_{ij} \neq 0 \Leftrightarrow e_{ij} \in \gamma_0  
\text{ and } 
z_{k} \neq 0 \Leftrightarrow e_{k} \in\gamma_0 
\}
\ \subseteq \
\ol{Z}
$$
We say that $\gamma_0 \preceq \gamma$ is an 
\emph{$\mathfrak{F}$-face} (for $(A,P)$) 
if the associated toric
orbit meets the total coordinate space 
$\ol{X} = V(g_I; \; I \in \mathfrak{I}) \subseteq \ol{Z}$,
that means if we have
$$
\ol{X}(\gamma_0) 
\ := \
\ol{X} \cap \ol{Z}(\gamma_0)
\ \ne \
\emptyset.
$$  
In particular, $\ol{X}$ is the disjoint union 
of the locally closed pieces $\ol{X}(\gamma_0)$
associated to the $\mathfrak{F}$-faces.
\end{remark}

\begin{remark}
\label{rem:rlv2}
Fix $u \in \Mov(A,P)^\circ$.
Then, for the ambient toric variety $Z$ 
and $X=X(A,P,u)$ of  
Construction~\ref{constr:RAPu},
we have the collections of 
\emph{relevant faces}:
\begin{eqnarray*}
\rlv(Z) 
& := & 
\{ \gamma_0 \preceq \gamma; \; u \in Q(\gamma_0)^\circ\},
\\
\rlv(X) 
& := & 
\{ \gamma_0 \in \rlv(Z); \;
\gamma_0 \text{ is an } \mathfrak{F}\text{-face} 
\}.
\end{eqnarray*}
Let $\gamma_0^* := \gamma_0^\perp \cap \gamma\preceq \gamma$ 
denote the complementary face of $\gamma_0 \preceq \gamma$.
Then there is a bijection between $\rlv(Z)$ and the fan 
$\Sigma$ of the toric variety $Z$:
$$
\rlv(Z) \ \to \ \Sigma,
\qquad\qquad
\gamma_0 \ \mapsto \ P(\gamma_0^*).
$$  
The toric orbits of $Z$ correspond to the 
cones of the fan $\Sigma$ and thus to the 
cones of~$\rlv(Z)$. 
Concretely, the toric orbit of $Z$ associated 
with $\gamma_0\in \rlv(Z)$  is 
$$ 
Z(\gamma_0)
\ = \
\pi(\ol{Z}(\gamma_0)).
$$ 
The relevant faces $\rlv(X)$ of $X$  
define exactly the toric orbits of 
$Z$ that intersect $X \subseteq Z$
non-trivially
and thus give a locally closed 
decomposition
$$
X 
\ = \ 
\bigcup_{\gamma_0\in\rlv(X)} X(\gamma_0),
\qquad\qquad
X(\gamma_0) 
\ := \ 
X \cap Z(\gamma_0) 
\ = \ 
\pi((\ol{X}(\gamma_0)).
$$
The fan $\Sigma_X$ generated 
by the cones $\sigma=P(\gamma_0^*)$,
where $\gamma_0\in\rlv(X)$,
defines the minimal toric open subset 
$Z_X \subseteq Z$ containing $X$.
For the set of rays we have
$$ 
\Sigma_X^{(1)}
\ = \ 
\Sigma^{(1)}
\ = \ 
\{\varrho_{ij}, \varrho_k; \ 
0 \le i \le r, \ 1 \le j \le n_i, \ 1 \le k \le m\},
$$
where the $\varrho_{ij} := \cone(v_{ij})$ and
$\varrho_k := \cone(v_k)$ are the rays 
through the columns $v_{ij}$ and~$v_k$ of the 
defining matrix $P$.
\end{remark}

\begin{remark}
\label{rem:divcones}
Let $X=X(A,P,u)$ arise from   
Construction~\ref{constr:RAPu}.
Then the cones of effective, 
movable, semiample and ample
divisor classes are given as
$$ 
\Eff(X) 
\ = \ 
Q(\gamma),
\qquad
\Mov(X) 
\ = \ 
\Mov(A,P)
\ = \ 
\bigcap_{\gamma_0 \text{ facet of }\gamma} Q(\gamma_0),
$$
$$
\SAmple(X)
\ = \ 
\bigcap_{\gamma_0\in\rlv(X)} Q(\gamma_0),
\qquad
\Ample(X)
\ = \ 
\bigcap_{\gamma_0\in\rlv(X)} Q(\gamma_0)^\circ.
$$
In particular, the GIT-fan of the $H$-action 
on $\ol{X}$ induces the Mori chamber 
decomposition, i.e., it subdivides $\Mov(X)$ into 
the nef cones of the small birational relatives of~$X$. 
\end{remark}

\begin{remark}
\label{rem:Qfact}
Let $X=X(A,P,u)$ arise from   
Construction~\ref{constr:RAPu}.
Consider $\gamma_0\in\rlv(X)$ 
and $x\in X(\gamma_0)$.
Then the following statements hold:
\begin{enumerate}
\item
$x$ is $\QQ$-factorial if and only 
if $Q(\gamma_0)$ is full-dimensional,
\item
$x$ is factorial if and only if 
$Q$ maps $\lin(\gamma_0)\cap\ZZ^{n+m}$ onto $\Cl(X)$,
\item
$x$ is smooth if and only if $x$ is factorial 
and all $z \in \pi^{-1}(x)$ are
smooth in~$\ol{X}$.
\end{enumerate}
\end{remark}

\begin{remark}
\label{rem:fanoRAP}
Let $X=X(A,P,u)$ arise from   
Construction~\ref{constr:RAPu}.
The anticanonical class of $X$ does 
not depend on $u$ and is given by 
$$ 
-\mathcal{K}_X
\ = \ 
\kappa(A,P)
\ := \ 
\sum_{i,j} Q(e_{ij})
\ + \ 
\sum_k Q(e_k)
\ - \ 
(r-1) \sum_{j=0}^{n_0} l_{0j} Q(e_{0j})
\ \in \ 
K .
$$
In particular, a $K$-graded ring $R(A,P)$ 
is the Cox ring of a Fano variety 
if and only if $\kappa(A,P)$ belongs
to the relative interior of $\Mov(A,P)$. 
\end{remark}

\begin{remark}
\label{rem:trop}
Consider $X \subseteq Z$, where 
$X=X(A,P,u)$ and $Z$ are as in 
Construction~\ref{constr:RAPu}.
Then, with $\lambda := 0 \times \QQ^s \subseteq \QQ^{r+s}$, 
the canonical basis vectors $e_1,\ldots, e_r\in\ZZ^{r+s}$ and
$e_0 := -e_1- \ldots -e_r$,
the associated tropical variety is  
$$ 
\trop(X) 
\ = \ 
\lambda_0 \cup \ldots \cup \lambda_r
\ \subseteq \ 
\QQ^{r+s},
\qquad \text{where} \quad
\lambda_i \ := \  \lambda + \cone(e_i).
$$
Note that this defines the coarsest possible quasifan structure
on $\trop(X)$, and the lineality space of this quasifan is $\lambda$.
Moreover, a cone $\sigma \in \Sigma$
corresponds to $\gamma_0 \in \rlv(X)$ 
if and only if 
$\sigma^\circ \cap \trop(X) \ne \emptyset$ 
holds. 
\end{remark}

\begin{definition}
Consider $X \subseteq Z$, where 
$X=X(A,P,u)$ and $Z$ are as in 
Construction~\ref{constr:RAPu}.
A cone $\sigma \in \Sigma_X$ is called
\begin{enumerate}
\item
\emph{big}, if $\sigma \cap \lambda_i^\circ \ne \emptyset$ 
holds for each $i = 0, \ldots, r$.
\item
\emph{elementary big} if it is big,
has no rays inside $\lambda$ 
and precisely one inside $\lambda_i$ 
for each $i = 0, \ldots, r$.
\item
a \emph{leaf cone} if 
$\sigma \subseteq \lambda_i$ holds for some $i$. 
\end{enumerate}
We say that the variety $X$ is \emph{weakly tropical}, 
if the fan $\Sigma_X$ is supported on 
the tropical variety $\trop(X)$.
\end{definition}

\begin{remark}
\label{rem:weaklytrop}
Let $X=X(A,P,u)$ arise from   
Construction~\ref{constr:RAPu}.
\begin{enumerate}
\item
The fan $\Sigma_X$ is generated 
by big cones and leaf cones.
\item
Every big cone of $\Sigma_X$ is 
of the form $P(\gamma_0^*)$ with 
a $\gamma_0 \in \rlv(X)$.
\item
The tropical variety $\trop(X)$ 
is contained in the support of 
$\Sigma_X$.  
\item
$X$ is weakly tropical if and only 
$\Sigma_X$ consists of leaf 
cones.
\item
If $X$ is weakly tropical,
then 
$\lambda \subseteq \trop(X)$
is a union of cones 
of $\Sigma_X$.
\end{enumerate}
\end{remark}


\section{First structural constraints}
\label{sec:firstStruct}

We derive first constraints on the defining 
matrices of smooth rational varieties with 
a torus action of complexity one having 
Picard number two.
We work in the notation of Section~\ref{sec:cpl1}.
The aim is to show the following.

\begin{proposition}
\label{prop:smooth-rho2}
Let $X$ be a non-toric smooth rational 
projective variety with a torus action
of complexity one and Picard number 
$\rho(X) = 2$.
Then $X \cong X(A,P,u)$, 
where $P$ is irredundant and fits into one 
of the following cases:
\begin{enumerate}
\item[(I)]
We have $r=2$ and one of the following constellations:
\begin{enumerate}
\item
$m \ge 0$ and $n = 4+n_0$, where $n_0 \geq 3$,  $n_1 = n_2 = 2$.
\item
$m = 0$ and $n = 6$, where $n_0 = 3$,  $n_1 = 2$, $n_2 = 1$.
\item
$m = 0$ and $n = 5$, where $n_0 = 3$,  $n_1 = 1$, $n_2 = 1$.
\item
$m\ge 0$ and $n = 6$, where $n_0 = n_1 = n_2 = 2$.
\item
$m\ge 0$ and $n = 5$, where $n_0 = n_1 = 2$, $n_2 = 1$.
\item
$m\ge 1$ and $n = 4$, where $n_0 = 2$, $n_1 = n_2 = 1$.
\end{enumerate}
\item[(II)]
We have $r=3$ and one of the following constellations:
\begin{enumerate}
\item
$m = 0$ and $n = 8$, where $n_0 = n_1 = n_2 = n_3 = 2$.
\item
$m = 0$ and $n = 7$, where $n_0 = n_1 = n_2 = 2$, $n_3 = 1$.
\item
$m = 0$ and $n = 6$, where $n_0 = n_1 = 2$, $n_2 = n_3 = 1$.
\end{enumerate}
\end{enumerate}
\end{proposition}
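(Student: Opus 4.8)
The plan is to combine the numerical constraints coming from smoothness (via Remark~\ref{rem:Qfact}) with the structure of the tropical variety (Remark~\ref{rem:trop}) and the assumption $\rho(X)=2$, i.e.\ $\Cl(X)=\ZZ^2$. Since $X$ is non-toric, Construction~\ref{constr:RAPu} gives $r\ge 2$, and since $K=\ZZ^{n+m}/\mathrm{im}(P^*)$ has rank $2$, we get $r+s = n+m-2$. First I would record that smoothness forces every maximal $\gamma_0\in\rlv(X)$ to have $Q(\gamma_0)$ full-dimensional in $K_\QQ$ and, more importantly, to satisfy the factoriality condition of Remark~\ref{rem:Qfact}(ii): $Q$ maps $\lin(\gamma_0)\cap\ZZ^{n+m}$ onto $\ZZ^2$. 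This is a primitivity/unimodularity statement about certain $2\times 2$ minors of $Q$, which translates back (via $P$) into strong divisibility constraints on the entries of $P$ — in particular on the exponent tuples $l_i$.

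Next I would use the tropical picture. By Remark~\ref{rem:trop}, $\trop(X)=\lambda_0\cup\dots\cup\lambda_r$ with $\lambda$ the common lineality space of dimension $s$, and by Remark~\ref{rem:weaklytrop} the big cones of $\Sigma_X$ meet every $\lambda_i^\circ$. A maximal cone $\sigma=P(\gamma_0^*)$ of $\Sigma_X$ has dimension $s+1=\dim X$, hence $\gamma_0^*$ has dimension $s+1$ and $\gamma_0$ has codimension $s+1$ in $\gamma$, i.e.\ $\dim\gamma_0 = n+m-s-1 = r-1$. So relevant faces giving the smooth points we must control are $(r-1)$-dimensional faces of the positive orthant, spanned by $r-1$ of the basis vectors $e_{ij},e_k$; smoothness at the corresponding point says the images under $Q$ of these $r-1$ vectors, together with the structure of $\gamma_0^*$, generate $\ZZ^2$. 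For $r=2$ this is one basis vector; for $r=3$ two. I would then separate the leaf cones from the big ones and check which combinations of block sizes $(n_0,\dots,n_r)$ and of $m$ can simultaneously (a) make $\Sigma_X$ complete and the associated $X$ $\QQ$-factorial and smooth, and (b) be irredundant, i.e.\ $l_{i1}+\dots+l_{in_i}\ge 2$ for all $i$.

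The heart of the argument — and the main obstacle — is the finiteness/case bookkeeping: showing that only the nine listed $(r;n_0,\dots,n_r;m)$-profiles survive. The key lever is that with $\rho(X)=2$ the cone $\Eff(X)=Q(\gamma)$ is a $2$-dimensional cone cut into at most a bounded number of chambers, and $\Mov(X)$ is the intersection of the $Q$-images of the facets of $\gamma$; smoothness along the elementary big cones forces each $n_i$ with $l_i\ne(1,\dots,1)$ to be small (essentially $n_i\le 2$, with $n_i=3$ only in the degenerate all-$l_{0j}=1$-but-$n_0=3$ configuration of case (I)(b),(c)), and forces $r\le 3$ because a fourth block would overdetermine the two free coordinates. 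I expect the $r=3$ analysis to be the most delicate: there one must rule out $n_i\ge 3$ in any block and also pin down $m=0$, which comes from the observation that an extra free generator $S_k$ would raise the number of rays beyond what a smooth complete fan on $\trop(X)\subseteq\QQ^{r+s}$ with these block multiplicities can carry. After the profile list is established, irredundancy plus the admissible operations of Remark~\ref{remark:admissibleops} put $P$ into the normal form recorded in the statement, completing the proof.
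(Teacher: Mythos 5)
There are two concrete problems with your plan. First, the dimension count is wrong: the maximal cones of $\Sigma$ are maximal cones of the fan of the ambient toric variety $Z$, hence of dimension $\dim Z = r+s = n+m-2$, not $\dim X = s+1$. Consequently the relevant faces $\gamma_0\preceq\gamma$ that control smoothness at the ``deepest'' points are the \emph{two}-dimensional faces (this is exactly why $Q(\gamma_0)$ being full-dimensional in $\Cl(X)_\QQ\cong\QQ^2$ forces $\dim\gamma_0\ge 2$, and why the paper works throughout with the faces $\gamma_{ij,k}$ and $\gamma_{i_1j_1,i_2j_2}$), independently of $r$. Your formula $\dim\gamma_0=n+m-s-1=r-1$ is incorrect on both counts (even granting your premise, $n+m-s-1=r+1$), and in particular your assertion that for $r=2$ one controls a single basis vector would make the unimodularity condition of Remark~\ref{rem:Qfact}(ii) vacuous rather than the engine of the argument.

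Second, and more importantly, the finiteness of the case list is not actually derived. The paper's mechanism has two steps you do not supply. (a) One first shows an \emph{elementary big cone} exists: if $X$ were weakly tropical, local factoriality would force $n_i\ge 2$ for all $i$ and $m\ge s+1$, giving $\rho(X)\ge r+3\ge 5$, a contradiction; hence some big cone exists and can be cut down to an elementary one. (b) Quasismoothness is then converted into combinatorics via the \emph{Jacobian} of the relations $g_0,\dots,g_{r-2}$: on the stratum over an elementary big cone at most two indices $i$ can have $l_{ij_i}\ge 2$ (whence at most two blocks with $n_i=1$, by irredundancy), and the explicit smoothness analysis of the strata $\ol{X}(\gamma_{ij,k})$, $\ol{X}(\gamma_{i_1j_1,i_2j_2})$ yields $r\le 3$ and the bounds on the $n_i$. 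This is combined with the decomposition $\Eff(X)=\tau^+\cup\tau_X\cup\tau^-$ and the fact that each of $\tau^\pm$ must contain at least two weights, which forces enough two-dimensional relevant faces to exist for the Jacobian conditions to bite. Your proposed substitute for step (b) in the $r=3$, $m=0$ case --- that an extra $S_k$ ``would raise the number of rays beyond what a smooth complete fan on $\trop(X)$ can carry'' --- is not a valid argument: nothing in the fan combinatorics alone bounds the number of rays, and in fact the paper's conclusion $m=0$ for $r=3$ comes from the Jacobian smoothness criterion for the faces $\gamma_{ij,k}$ (Lemma~\ref{lem:sampleFF}(ii) forces $r=2$ whenever such a face is relevant), not from a ray count. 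As written, your outline would not exclude, say, a fourth block or $m>0$ with $r=3$.
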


The statement is an immediate consequence of 
Propositions~\ref{prop:4-1-m-pos} 
and~\ref{prop:4-1-m-0}; see end of this 
section.
Throughout the whole section, the defining 
matrix $P$ is irredundant.
In particular, $X(A, P, u)$ is non-toric
if and only if $r \ge 2$ holds, i.e., we have 
a relation in the Cox ring.
During our considerations, we will freely use 
the Remarks~\ref{const:rlvu} to~\ref{rem:weaklytrop}.

We first study the impact of $X = X(A,P,u)$
being locally factorial on the defining matrix~$P$,
where locally factorial means that the local rings 
of the points $x \in X$ are unique factorization 
domains.

\begin{lemma}
\label{lem:ebcni2}
Let $X=X(A,P,u)$ be non-toric and locally factorial. 
If $X$ is weakly tropical, then $n_i \geq 2$ 
holds for all $i=0, \ldots , r$.
\end{lemma}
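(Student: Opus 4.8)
The plan is to argue by contradiction. Suppose $X = X(A,P,u)$ is non-toric, locally factorial and weakly tropical, but there is some index $i_0$ with $n_{i_0} = 1$. By irredundancy we then have $l_{i_0 1} \ge 2$, so the single column $v_{i_0 1}$ in the $i_0$-block is a primitive vector whose leading $L$-part is (up to sign, and placement) the vector $\pm l_{i_0 1} e_{i_0}$ with $|l_{i_0 1}| \ge 2$. Since $X$ is weakly tropical, Remark~\ref{rem:weaklytrop}(iv)--(v) tells us that $\Sigma_X$ consists of leaf cones only and that the lineality space $\lambda = 0 \times \QQ^s$ is a union of cones of $\Sigma_X$; in particular $\Sigma_X$ refines the decomposition of $\trop(X) = \lambda_0 \cup \ldots \cup \lambda_r$ into the leaves $\lambda_i = \lambda + \cone(e_i)$. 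The key geometric point is that the ray $\varrho_{i_0 1} = \cone(v_{i_0 1})$ is the \emph{only} ray of $\Sigma_X$ meeting the relative interior $\lambda_{i_0}^\circ$, because all other columns $v_{ij}, v_k$ lie in $\lambda$ or in the other leaves $\lambda_{i}$, $i \ne i_0$.

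Next I would translate this into a statement about a maximal cone and the associated relevant face. Pick a maximal cone $\sigma \in \Sigma_X$ contained in the leaf $\lambda_{i_0}$ and having $\varrho_{i_0 1}$ as one of its rays; it corresponds via the bijection of Remark~\ref{rem:rlv2} to some $\gamma_0 \in \rlv(X)$ with $\sigma = P(\gamma_0^*)$, and by Remark~\ref{rem:rlv2} again the rays of $\sigma$ are exactly the $P(e)$ for $e$ ranging over the rays of $\gamma_0^*$, i.e. over the coordinates \emph{not} lying in $\gamma_0$. The point $x \in X(\gamma_0)$ is smooth, hence factorial, so by Remark~\ref{rem:Qfact}(ii) the map $Q$ sends $\lin(\gamma_0) \cap \ZZ^{n+m}$ onto $\Cl(X) = K$. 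Equivalently, writing $\gamma_0^*$ for the complementary face, $Q$ restricted to the sublattice spanned by the basis vectors outside $\gamma_0^*$ is surjective onto $K$. Dualizing via the defining exact sequence $\ZZ^{r+s} \xrightarrow{P^*} \ZZ^{n+m} \xrightarrow{Q} K$, surjectivity of $Q$ on that coordinate sublattice is equivalent to primitivity/saturation of the corresponding face of $P$, i.e. to the rays $P(e)$, $e \in \gamma_0^*$ forming part of a lattice basis; in particular the single ray generator $v_{i_0 1}$ is primitive in the sense that it is part of a $\ZZ$-basis of the lattice it spans together with the remaining rays of $\sigma$.

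Now I derive the contradiction. Because $\sigma \subseteq \lambda_{i_0}$ is a leaf cone, all of its ray generators other than $v_{i_0 1}$ lie in the lineality space $\lambda = 0 \times \QQ^s$; their $L$-components vanish. Hence, reading off the $i_0$-th coordinate of the upper $r$ rows of $P$, the only ray of $\sigma$ with a nonzero entry in that row is $v_{i_0 1}$, and that entry is $\pm l_{i_0 1}$. Therefore the lattice generated by the ray generators of $\sigma$ inside $\lin(\gamma_0)$ cannot be saturated: projecting to the $i_0$-th of the first $r$ coordinates, its image is $l_{i_0 1}\ZZ \subsetneq \ZZ$, since $l_{i_0 1} \ge 2$. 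This contradicts factoriality of $x$ via Remark~\ref{rem:Qfact}(ii). Hence no such $i_0$ exists and $n_i \ge 2$ for all $i$.

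The main obstacle I anticipate is the bookkeeping in the middle step: making precise which face $\gamma_0^* \preceq \gamma$ corresponds to the chosen leaf cone $\sigma$, and checking that $v_{i_0 1}$ is genuinely among its ray generators (one must ensure $\sigma$ is chosen maximal in $\lambda_{i_0}$ and that a maximal leaf cone in $\lambda_{i_0}$ really does use the ray $\varrho_{i_0 1}$ — this is where weak tropicality, giving that $\Sigma_X$'s support is all of $\trop(X)$ so that $\lambda_{i_0}$ is actually covered, is essential). Once the correspondence is set up correctly, the lattice-saturation computation is immediate from the block shape of $P$ and the hypothesis $l_{i_0 1} \ge 2$.
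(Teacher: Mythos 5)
Your argument is correct and follows essentially the same route as the paper's: weak tropicality forces an $(s+1)$-dimensional leaf cone $\sigma = \varrho_{i_0 1} + \tau$ with $\tau \subseteq \lambda$, local factoriality makes $\sigma = P(\gamma_0^*)$ regular (equivalently, the lattice spanned by its ray generators is saturated), and the block shape of $P$ then forces $l_{i_0 1} = 1$, contradicting irredundancy. The only point to make explicit in your final step is that the saturation of the sublattice generated by the ray generators equals $\ZZ e_{i_0} \oplus (0 \times \ZZ^s)$ (a coordinate projection of a general saturated sublattice need not be all of $\ZZ$), so that comparing its image $\ZZ$ under the $i_0$-th coordinate projection with the image $l_{i_0 1}\ZZ$ of your sublattice is legitimate.
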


\begin{proof}
Assume that $n_i=1$ holds for some $i$. 
Since $X$ is weakly tropical, 
there exists a cone $\sigma  \in \Sigma_X$ 
of dimension $s+1$ contained in the 
leaf $\lambda_i$. 
Because of $n_i=1$ we have 
$\sigma = \varrho_{i1}+\tau$ 
with a face $\tau \preceq \sigma$ 
such that $\tau \subseteq \lambda$. 
Now, $\sigma = P(\gamma_0^*)$ holds for 
some $\gamma_0 \subseteq \rlv(X)$.
Since the points of $X(\gamma_0)$ are 
factorial, $\sigma$ is a regular cone.
Thus, also $\tau \subseteq \lambda$ 
must be regular.
This implies $l_{i1}=1$, 
contradicting irredundancy of $P$.
\end{proof}

\begin{lemma}
\label{lem:weaklytroprho5}
Let $X=X(A,P,u)$ be non-toric and locally factorial. 
If $X$ is weakly tropical, then $\rho(X)\ge r+3$ holds.
\end{lemma}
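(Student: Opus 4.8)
The plan is to reduce the assertion to a purely numerical inequality among the discrete invariants $n$, $m$, $r$, $s$ of the defining data, and then bound $n$ and $m$ separately. Since $X$ is locally factorial it is $\QQ$-factorial with $\Pic(X) = \Cl(X) \cong K$, so $\rho(X) = \rk(K)$. As the columns of $P$ generate $\QQ^{r+s}$ as a cone, the map $P^*$ is injective, hence $\rk(K) = \rk(\ZZ^{n+m}/\text{im}(P^*)) = n+m-(r+s)$. Thus the claim $\rho(X) \ge r+3$ is equivalent to $n + m \ge 2r + s + 3$, and it suffices to prove the two bounds $n \ge 2(r+1)$ and $m \ge s+1$.

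The first bound is immediate from Lemma~\ref{lem:ebcni2}: under the present hypotheses $n_i \ge 2$ holds for every $i \in \{0, \ldots, r\}$, so $n = n_0 + \cdots + n_r \ge 2(r+1)$.

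For the second bound I would exploit that weak tropicality forces the lineality space $\lambda = 0 \times \QQ^s$ into the fan $\Sigma_X$ in a way that consumes rays. By Remark~\ref{rem:weaklytrop}(v), $\lambda$ is a union of cones of $\Sigma_X$; since a pointed polyhedral cone is the conical hull of its extreme rays, this yields $\lambda = \cone(\text{rays of }\Sigma_X\text{ lying in }\lambda)$. By Remark~\ref{rem:rlv2} the rays of $\Sigma_X$ are the $\varrho_{ij}$ and the $\varrho_k$, and the block shape of $P$ in Notation~\ref{constr:defdata} shows that each column $v_{ij}$ has nonzero projection to the first $r$ coordinates (namely $l_{ij}$ in the $i$-th slot for $i \ge 1$, and $-l_{0j}$ in every slot for $i=0$), whereas each $v_k$ has zero such projection; hence the rays of $\Sigma_X$ contained in $\lambda$ are precisely $\varrho_1, \ldots, \varrho_m$. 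Therefore $\lambda = \cone(v_1, \ldots, v_m)$, and a linear subspace of dimension $s$ that is generated as a cone by $m$ vectors must satisfy $m \ge s+1$. (Here $s \ge 1$, because a non-toric complete rational variety is not a curve, so $\dim X = s+1 \ge 2$.) Combining the two bounds gives $\rho(X) = n + m - r - s \ge 2(r+1) + (s+1) - r - s = r+3$.

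The only step requiring genuine care is the bound $m \ge s+1$: one must identify correctly which rays of $\Sigma_X$ lie in the lineality space $\lambda$ — this is precisely where the specific block structure of $P$ is used — and then invoke that a linear space cannot be positively spanned by fewer than $\dim+1$ vectors. Everything else is bookkeeping with the formula $\rho(X) = n+m-r-s$ and the already-available inequality $n_i \ge 2$.
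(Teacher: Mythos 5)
Your proof is correct and follows essentially the same route as the paper: both arguments combine the bound $n\ge 2(r+1)$ from Lemma~\ref{lem:ebcni2} with the observation that the lineality space $\lambda$, being a union of cones of $\Sigma_X$, forces $m\ge s+1$, and then conclude via $\rho(X)=n+m-r-s$. The only difference is that you spell out in more detail why the rays of $\Sigma_X$ inside $\lambda$ are exactly the $\varrho_k$ and why a linear space of dimension $s$ needs at least $s+1$ generators as a cone, which the paper leaves implicit.
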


\begin{proof}
Lemma \ref{lem:ebcni2} ensures $n_i \geq 2$ for 
all $i=1, \ldots , r$, hence $n \geq 2 \cdot (r+1)$.
The $s$-dimensional lineality space 
$\lambda = \{0\} \times  \QQ^s \subseteq \trop(X)$ 
is a union of cones of $\Sigma_X$.
Thus $P$ must have at least $s+1$ columns 
$v_k$ which means $m \geq s+1$.
Together this yields
$$
\rho(X) 
\ = \ 
n + m - (r-1) - (s+1)
\ \ge \   
r + 3.
$$
\end{proof}

\begin{lemma}
\label{lem:notwtrop}
Let $X=X(A, P, u)$ be non-toric and not weakly 
tropical. 
If $X$ is $\QQ$-factorial, then there is an 
elementary big cone in $\Sigma_X$.
\end{lemma}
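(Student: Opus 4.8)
The plan is to produce the required cone as a face of a big cone of $\Sigma_X$, the existence of which is forced by the failure of weak tropicality; here $\QQ$-factoriality enters only to guarantee that every cone of $\Sigma_X$ is simplicial, so that its faces are exactly the cones spanned by subsets of its rays. First I would argue that $\Sigma_X$ contains a big cone: since $X$ is not weakly tropical, Remark~\ref{rem:weaklytrop}(i) and~(iv) give a cone of $\Sigma_X$ that is not a leaf cone, and a non-leaf cone can only be a face of a big generating cone, so $\Sigma_X$ contains a big cone $\sigma = P(\gamma_0^*)$ with $\gamma_0 \in \rlv(X)$. By Remark~\ref{rem:Qfact}(i), $\QQ$-factoriality of $X$ means that $Q(\gamma_0)$ is full-dimensional, equivalently that $\sigma$ is simplicial; in particular the rays of $\sigma$ are linearly independent and every subset of them spans a face of $\sigma$.

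Next I would extract the relevant combinatorics of $P$ from Notation~\ref{constr:defdata}: with $e_0 := -e_1 - \ldots - e_r$, the first $r$ coordinates of the column $v_{ij}$ equal $l_{ij}\, e_i$, whereas the first $r$ coordinates of each $v_k$ vanish. Hence $\varrho_{ij} = \cone(v_{ij})$ lies in $\lambda_i$ but not in $\lambda$, while $\varrho_k = \cone(v_k) \subseteq \lambda$. Since a cone of $\Sigma_X$ is big exactly when it meets each $\lambda_i^\circ$, and a point lies in $\lambda_i^\circ$ precisely when its first $r$ coordinates form a positive multiple of $e_i$, a short argument with nonnegative combinations shows that a big cone must use at least one ray $\varrho_{ij_i}$ from each block $i = 0, \ldots, r$: the $\varrho_k$ contribute nothing to the first $r$ coordinates, and a nonnegative combination of the rays of the remaining blocks is never a positive multiple of $e_i$.

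Finally, fixing for every $i$ one ray $\varrho_{ij_i}$ of $\sigma$ from block $i$, I would put $\tau := \cone(v_{0j_0}, \ldots, v_{rj_r})$. By simpliciality of $\sigma$ this is a face of $\sigma$, hence a cone of $\Sigma_X$; it has no ray in $\lambda$ and exactly one ray in each $\lambda_i$, so only bigness of $\tau$ remains. This is the one explicit computation: a nonnegative combination $\sum_i \mu_i v_{ij_i}$ has first $r$ coordinates $\sum_i \mu_i l_{ij_i}\, e_i$, and for each $i$ one can pick positive $\mu_0, \ldots, \mu_r$ making this a positive multiple of $e_i$ (using $e_0 = -e_1 - \ldots - e_r$), so that $\tau$ meets $\lambda_i^\circ$. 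Thus $\tau$ is elementary big and lies in $\Sigma_X$.

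The only genuinely non-formal step is the combinatorial claim of the second paragraph — that a big cone carries a ray from each of the $r+1$ blocks, and that the cone over one such ray per block is again big — which rests on the explicit shape of $L$; the rest is the remark that under $\QQ$-factoriality ``face of $\sigma$'' may be read as ``subset of the rays of $\sigma$'', which is what turns the block-by-block selection into an honest cone of $\Sigma_X$.
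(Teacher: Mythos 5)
Your argument is correct and follows essentially the same route as the paper's proof: extract a big cone from the failure of weak tropicality, use $\QQ$-factoriality to get simpliciality, and pass to the face spanned by one ray per leaf $\lambda_i$. The only difference is that you spell out the combinatorial verifications (that a big cone must carry a ray $\varrho_{ij_i}$ from each block, not merely a ray of $\lambda_i$ which could lie in $\lambda$, and that the selected sub-cone is again big) which the paper leaves implicit.
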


\begin{proof}
Since $X$ is not weakly tropical, 
there exists a big cone $\sigma \in \Sigma_X$.
We have $\sigma = P(\gamma_0^*)$ with 
$\gamma_0 \in \rlv(X)$.
Since the points of $X(\gamma_0)$ are 
$\QQ$-factorial, 
the cone $\sigma$ is simplical.
For every $i = 0 \ldots, r$ choose a ray 
$\varrho_i \preceq \sigma$ 
with $\varrho_i \in \lambda_i$.
Then 
$\sigma_0 := \varrho_0 + \ldots + \varrho_r \preceq \sigma$
is as wanted.
\end{proof}

\begin{corollary}
\label{cor:ebc}
Let $X=X(A,P,u)$ be non-toric and locally factorial.
If $\rho(X) \leq 4$ holds, then there exists 
an elementary big cone $\sigma \in \Sigma_X$.
\end{corollary}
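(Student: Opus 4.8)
The plan is to obtain the elementary big cone directly from the two preceding lemmas, after a case distinction on whether or not $X$ is weakly tropical. The only preparatory observation needed is that local factoriality implies $\QQ$-factoriality: if $x \in X(\gamma_0)$ is factorial, then by Remark~\ref{rem:Qfact}(ii) the map $Q$ sends $\lin(\gamma_0) \cap \ZZ^{n+m}$ onto $\Cl(X)$, so $Q(\lin(\gamma_0))$ spans $K_\QQ$; since a cone spans its own linear hull, $Q(\gamma_0)$ is then full-dimensional, and Remark~\ref{rem:Qfact}(i) gives $\QQ$-factoriality of $x$. Hence a locally factorial $X$ is $\QQ$-factorial.

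First I would dispose of the case that $X$ is \emph{not} weakly tropical. Then $X$ is non-toric and $\QQ$-factorial, so Lemma~\ref{lem:notwtrop} applies verbatim and yields an elementary big cone $\sigma \in \Sigma_X$; there is nothing more to prove.

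It remains to rule out the weakly tropical case. Here $X$ is non-toric, locally factorial and weakly tropical, so Lemma~\ref{lem:weaklytroprho5} gives $\rho(X) \ge r+3$. Since $X$ is non-toric and $P$ is irredundant, Construction~\ref{constr:RAPu} forces $r \ge 2$, hence $\rho(X) \ge 5$, contradicting the hypothesis $\rho(X) \le 4$. Thus $X$ cannot be weakly tropical, and we are always in the first case.

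The whole argument is just an assembly of the earlier results, so I do not anticipate any genuine obstacle; the only slightly non-formal point is the implication ``locally factorial $\Rightarrow$ $\QQ$-factorial,'' which is immediate from Remark~\ref{rem:Qfact}.
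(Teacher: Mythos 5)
Your proof is correct and is exactly the argument the paper intends: the corollary is stated right after Lemmas~\ref{lem:weaklytroprho5} and~\ref{lem:notwtrop} precisely so that one rules out the weakly tropical case via $\rho(X)\ge r+3\ge 5$ (using $r\ge 2$ for non-toric $X$ with irredundant $P$) and then applies Lemma~\ref{lem:notwtrop}. Your explicit justification of ``locally factorial $\Rightarrow$ $\QQ$-factorial'' via Remark~\ref{rem:Qfact} is a fine way to make the hypotheses of Lemma~\ref{lem:notwtrop} available.
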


Next we investigate the effect of quasismoothness 
on the defining matrix $P$,
where we call $X = X(A,P,u)$ \emph{quasismooth} 
if $\wh{X}$ is smooth.
Thus, quasismoothness means that $X$ has at most 
quotient singularities by quasitori.
The smoothness of $\wh{X}$ will lead 
to conditions on $P$ via the Jacobian 
of the defining relations of $\ol{X}$.

\begin{remark}
Let $(A,P)$ be defining matrices.
Then the Jacobian $J_g$ of 
the defining relations $g_0,\ldots,g_{r-2}$
from Remark~\ref{rem:ci}  
is of the shape $J_g=(J,0)$ with 
a zero block of size $(r-1) \times m$ 
corresponding to the variables 
$S_1, \dots , S_m$ and a block
$$
J 
\ \sei \ 
\left[
\begin{array}{cccccccccc}
\delta_{10} &  \delta_{11} & \delta_{12} & 0 & 
\\
0 & \delta_{21} & \delta _{22} & \delta_{23} & 0
\\
 &  &  &  & & \vdots & 
\\
 &  &  &  & &  & \delta_{r-2,r-3} & \delta_{r-2,r-2} & \delta_{r-2,r-1}&0
\\ 
  &  &  &  & &  & 0& \delta_{r-1,r-2} & \delta_{r-1,r-1} & \delta_{r-1,r}
\\ 
\end{array}
\right],
$$
of size $(r-1) \times n$, 
where each vector $\delta_{a,i}$ is a nonzero multiple of 
the gradient of the monomial $T_i^{l_i}$:
$$
\delta_{a,i} \ = \ 
\alpha_{a,i} 
\left( 
l_{i1} \frac{T_i^{l_i}}{T_{i1}}, 
\ \ldots, \ 
l_{in_i} \frac{T_i^{l_i}}{T_{in_i}}
\right),
\qquad
\alpha_{a,i} \ \in \ \KK^*.
$$
For given $1 \le a,b \le r-1$, $0 \le i \le r$
and $z \in \ol{X}$, 
we have $\delta_{a,i}(z) = 0$ if and only if 
$\delta_{b,i}(z) = 0$.
Moreover, the Jacobian $J_g(z)$ of a point $z\in\ol{X}$ 
is of full rank if and only if 
$\delta_{a,i}(z) = 0$ holds for at most two 
different~$i = 0, \ldots, r$.
\end{remark}

\begin{lemma}
\label{lem:triple1xy}
Assume that $X=X(A,P,u)$ is non-toric 
and that there is an elementary big cone 
$\sigma=\varrho_{0j_0}+\ldots+\varrho_{rj_r}\in\Sigma_X$.
If $X$ is quasismooth, then $l_{ij_i} \ge 2$ holds 
for at most two $i=0,\dots,r$.
\end{lemma}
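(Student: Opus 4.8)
The plan is to exploit the explicit description of the point set $\pi^{-1}(x)$ over a point $x \in X(\gamma_0)$ together with the Jacobian criterion recorded in the Remark preceding the statement. Since $\sigma = \varrho_{0j_0} + \ldots + \varrho_{rj_r}$ is an elementary big cone of $\Sigma_X$, it equals $P(\gamma_0^*)$ for some relevant face $\gamma_0 \in \rlv(X)$; by the definition of the complementary face, $\gamma_0^*$ is spanned exactly by the basis vectors $e_{ij_i}$, $i = 0,\dots,r$, so the complementary face $\gamma_0$ is spanned by all the remaining $e_{uv}$ (with $(u,v)$ not of the form $(i,j_i)$) and all the $e_k$. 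Hence a point $z \in \ol{X}(\gamma_0)$ has coordinates $z_{ij_i}$ possibly vanishing and all other coordinates nonzero. I would first pick a point $x \in X(\gamma_0)$ and a preimage $z \in \wh{X}$ with $\pi(z) = x$; since $X$ is quasismooth, $z$ is a smooth point of $\ol{X}$, so the Jacobian $J_g(z)$ has full rank $r-1$.

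Next I would translate ``full rank of $J_g(z)$'' into a statement about the $\delta_{a,i}(z)$ via the Remark: $J_g(z)$ has full rank if and only if $\delta_{a,i}(z) = 0$ for at most two indices $i \in \{0,\dots,r\}$. So it suffices to produce, for each $i$ with $l_{ij_i} \ge 2$, a preimage point $z$ at which $\delta_{a,i}(z) = 0$; more precisely I want a single $z$ that kills $\delta_{a,i}$ simultaneously for all such $i$. The key computation is that $\delta_{a,i}(z)$ is a nonzero scalar multiple of the gradient vector $\bigl(l_{i1} T_i^{l_i}/T_{i1},\,\ldots,\,l_{in_i} T_i^{l_i}/T_{in_i}\bigr)$ evaluated at $z$, and since $\sigma$ is elementary big, within the $i$-th block only the coordinate $z_{ij_i}$ can be zero, while all $z_{i v}$ with $v \ne j_i$ are nonzero. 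Therefore $\delta_{a,i}(z) = 0$ as a vector if and only if every component vanishes; the component $l_{i j_i} T_i^{l_i}/T_{ij_i}$ is a monomial in which $T_{ij_i}$ appears with exponent $l_{ij_i} - 1$, which is positive exactly when $l_{ij_i} \ge 2$, so this component (and similarly every other component, each of which still contains the factor $T_{ij_i}^{l_{ij_i}}$) vanishes precisely when we additionally arrange $z_{ij_i} = 0$.

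So the heart of the argument is to show that one can actually choose a point $z$ in the fiber over some $x \in X(\gamma_0)$ with $z_{ij_i} = 0$ for every $i$ with $l_{ij_i} \ge 2$ — i.e. that $\ol{X}(\gamma_0) \ne \emptyset$ with all those coordinates set to zero. This is where I expect the main obstacle to lie, and it is handled by the $\mathfrak{F}$-face machinery: $\gamma_0$ is a relevant face, hence an $\mathfrak{F}$-face, so $\ol{X}(\gamma_0) \ne \emptyset$; a point of $\ol{X}(\gamma_0)$ has precisely the coordinates indexed by $\gamma_0$ nonzero, so the coordinates $z_{ij_i}$ (indexed by $\gamma_0^*$, hence not by $\gamma_0$) are automatically zero. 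Then $\pi(z) \in X(\gamma_0)$, and $z$ is smooth in $\ol{X}$ by quasismoothness, so $J_g(z)$ has full rank; by the displayed computation $\delta_{a,i}(z) = 0$ for every $i$ with $l_{ij_i} \ge 2$; by the rank criterion there can be at most two such $i$. This gives the claim; the only care needed is to verify that the various $\delta_{a,i}$ indexing (the band structure of $J$, and the fact that $\delta_{a,i}(z) = 0$ for one admissible $a$ forces it for all admissible $a$) is used consistently, which is exactly the content of the Remark and so may be cited.
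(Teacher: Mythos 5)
Your proposal is correct and follows essentially the same route as the paper's proof: pass to the relevant face $\gamma_0$ with $\sigma = P(\gamma_0^*)$, take $z \in \ol{X}(\gamma_0)$ (nonempty since $\gamma_0$ is an $\mathfrak{F}$-face), use quasismoothness to get full rank of $J_g(z)$, and translate this via the $\delta_{a,i}$ into the bound on the number of $i$ with $l_{ij_i}\ge 2$. The only difference is that you spell out the vanishing analysis of the gradient components that the paper leaves implicit.
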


\begin{proof}
We have $\sigma = P(\gamma_0^*)$ with a relevant face 
$\gamma_0 \in \rlv(X)$.
Since $X$ is quasismooth, any $z \in \ol{X}(\gamma_0)$ 
is a smooth point of~$\ol{X}$. 
Thus, $J_g(z)$ is of full rank $r-1$.
Consequently, $\delta_{a,i}(z) = 0$ holds 
for at most two different~$i$.
This means $l_{ij_i} \ge 2$ for at most 
two different~$i$.
\end{proof}

\begin{corollary}
\label{cor:qg-bigtower}
Let $X = X(A,P,u)$ be non-toric and quasismooth.
If there is an elementary big cone in $\Sigma_X$,
then $n_i=1$ holds for at most two different 
$i=0, \dots ,r$.
\end{corollary}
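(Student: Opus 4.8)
The plan is to reduce the statement directly to Lemma~\ref{lem:triple1xy} by translating the purely combinatorial hypothesis $n_i = 1$ into the exponent condition $l_{ij_i} \ge 2$ that appears there. The bridge between the two is the irredundancy of $P$ together with the ray structure of the leaves $\lambda_i$.

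First I would fix an elementary big cone $\sigma = \varrho_{0j_0} + \ldots + \varrho_{rj_r} \in \Sigma_X$, which exists by hypothesis. Recall from Notation~\ref{constr:defdata} and Remark~\ref{rem:trop} that the rays lying in the leaf $\lambda_i$ but outside the lineality space $\lambda$ are precisely $\varrho_{i1}, \ldots, \varrho_{in_i}$, coming from the block of columns $v_{i1}, \ldots, v_{in_i}$ of $P$, whereas the remaining rays $\varrho_k$ lie inside $\lambda$. By the definition of an elementary big cone, $\sigma$ has no ray inside $\lambda$ and exactly one ray inside each $\lambda_i$; hence its ray $\varrho_{ij_i}$ sitting in $\lambda_i$ is one of the rays $\varrho_{i1}, \ldots, \varrho_{in_i}$.

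Now suppose $n_i = 1$ for some index $i$. Then the $i$-th block consists of the single column $v_{i1}$, so the only available ray in $\lambda_i$ is $\varrho_{i1}$; thus the ray of $\sigma$ in $\lambda_i$ must be $\varrho_{i1}$, i.e. $j_i = 1$. Since $P$ is irredundant, we have $l_{i1} + \ldots + l_{in_i} \ge 2$, which for $n_i = 1$ reads $l_{ij_i} = l_{i1} \ge 2$. In other words, every index $i$ with $n_i = 1$ automatically satisfies $l_{ij_i} \ge 2$.

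Finally I would invoke Lemma~\ref{lem:triple1xy}: as $X$ is quasismooth and $\sigma$ is an elementary big cone, the inequality $l_{ij_i} \ge 2$ can hold for at most two indices $i = 0, \ldots, r$. Combined with the implication established in the previous step, this forces $n_i = 1$ to hold for at most two indices, which is exactly the claim. The only point needing care is the identification of which ray of $\sigma$ occupies each leaf $\lambda_i$; this is immediate from the column shape of $P$ and the description of $\trop(X)$ and so requires no computation, making this step the entirety of the argument rather than a genuine obstacle.
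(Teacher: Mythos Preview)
Your argument is correct and is exactly the reasoning the paper intends: the corollary is stated without proof immediately after Lemma~\ref{lem:triple1xy}, relying on the standing assumption that $P$ is irredundant (so $n_i=1$ forces $l_{i1}\ge 2$) together with the fact that the unique ray of an elementary big cone in $\lambda_i$ must be $\varrho_{i1}$ when $n_i=1$. You have simply spelled out what the paper leaves implicit.
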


\begin{lemma}
\label{lem:sampleFF}
Let $(A,P)$ be defining matrices.
Consider the rays $\gamma_{k} := \cone(e_k)$
and $\gamma_{ij} := \cone(e_{ij})$ 
of the orthant $\gamma \subseteq \QQ^{r+s}$ 
and the twodimensional faces
$$
\gamma_{k_1,k_2} \ := \gamma_{k_1} + \gamma_{k_2},
\quad
\gamma_{ij,k} := \gamma_{ij} + \gamma_{k},
\quad
\gamma_{i_1j_1,i_2j_2} := \gamma_{i_1j_1} + \gamma_{i_2j_2}.
$$
\begin{enumerate}
\item 
All $\gamma_k$, resp.~$\gamma_{k_1,k_2}$, 
are $\mathfrak{F}$-faces and 
each $\ol{X}(\gamma_{k})$, resp.~$\ol{X}(\gamma_{k_1,k_2})$,
consists of singular points of $\ol{X}$.
\item 
A given $\gamma_{ij}$, resp.~$\gamma_{ij,k}$,
is an $\mathfrak{F}$-face if and only if 
$n_i \ge 2$ holds. 
In that case, $\ol{X}(\gamma_{ij})$, resp.~$\ol{X}(\gamma_{ij,k})$,
consists of smooth points of $\ol{X}$ 
if and only if $r=2$, $n_i =2$ and $l_{i,3-j} = 1$ hold.
\item 
A given $\gamma_{ij_1,ij_2}$ with $j_1 \ne j_2$ 
is an $\mathfrak{F}$-face if and only if 
$n_i \ge 3$ holds.
In that case, $\ol{X}(\gamma_{ij_1,ij_2})$ consists of 
smooth points of $\ol{X}$ if and only if
$r=2$, $n_i=3$ and $l_{ij} = 1$ for the $j \ne j_1,j_2$ hold.
\item 
A given $\gamma_{i_1j_1,i_2j_2}$ with $i_1\neq i_2$ is 
an $\ff$-face if and only if we have 
$n_{i_1}, n_{i_2}\ge 2$ or $n_{i_1}=n_{i_2}=1$ and $r=2$.
In the former case, $\ol{X}(\gamma_{i_1j_1,i_2j_2})$ consists 
of smooth points of $\ol{X}$ if and only if 
one of the following holds:
\begin{itemize}
\item
$r=2$, $n_{i_t}=2$ and $l_{i_t,3-j_t} = 1$ for a $t\in\{1,2\}$,
\item
$r=3$, $n_{i_1}=n_{i_2}=2$, $l_{i_1,3-j_1}=l_{i_2,3-j_2} = 1$.
\end{itemize}
\end{enumerate}
\end{lemma}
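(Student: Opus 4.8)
The proof is a systematic case analysis built on the combinatorial dictionary from Section~\ref{sec:cpl1}: an $\mathfrak{F}$-face $\gamma_0 \preceq \gamma$ is detected by the non-vanishing of the relations $g_I$ on the corresponding coordinate subspace, and smoothness of $\ol{X}(\gamma_0)$ is governed by the rank of the Jacobian $J_g$ via the remark preceding Lemma~\ref{lem:triple1xy}. The plan is to treat each of the four types of two-dimensional (or one-dimensional, for item~(i)) faces in turn, in each case first settling the $\mathfrak{F}$-face criterion, then the smoothness criterion on those orbits that do qualify.

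\textbf{Item (i): faces spanned by $S$-variables.} Here the key observation is that every $g_I$ involves only $T$-variables, so setting all $T_{ij}=0$ automatically satisfies all relations; hence $\gamma_k$ and $\gamma_{k_1,k_2}$ are always $\mathfrak{F}$-faces. For the singularity claim, a point $z\in\ol{X}(\gamma_k)$ has $z_{ij}=0$ for all $i,j$, so every monomial gradient $\delta_{a,i}(z)$ vanishes (each $\delta_{a,i}$ is a multiple of the gradient of $T_i^{l_i}$, which vanishes whenever at least one coordinate in block $i$ is zero, and here \emph{all} are zero since $n_i\ge 1$ and the exponents $l_{ij}$ are positive). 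By the rank criterion, $J_g(z)$ has rank $\le$ (number of $i$ with $\delta_{a,i}(z)\ne0$) $=0 < r-1$ because $r\ge 2$ in the non-toric case — wait, this is a general lemma about $(A,P)$, so I would instead argue directly: with all $T$-coordinates zero, the full Jacobian $J_g(z)$ is the zero matrix, which fails to have rank $r-1$; hence $z\in\Sing(\ol X)$.

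\textbf{Items (ii)--(iv): faces involving $T$-variables.} For an $\mathfrak{F}$-face condition: a coordinate subspace on which some block $i$ has \emph{exactly one} nonzero $T$-coordinate but $T_i^{l_i}$ still needs a partner to cancel in a trinomial $g_{i,i',i''}$ forces the determinant relations to vanish only if enough blocks survive; the precise bookkeeping is that $\gamma_{ij}$ (one coordinate in block $i$ alive, everything else dead) meets $\ol X$ iff the surviving monomial $T_{ij}^{l_{ij}}$ — which is nonzero — can be balanced, i.e. iff $T_i^{l_i}$ is actually a monomial in $T_{ij}$ alone, which needs $n_i\ge 2$ so that the \emph{other} variable being absent kills $T_i^{l_i}$... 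I would sort this out carefully from the shape of the $g_I$, using that the trinomials are $2\times 2$ minors and that $r\ge 2$. For smoothness, once the face qualifies, I compute the rank of the Jacobian block $J$ restricted to the surviving columns: a point $z\in\ol X(\gamma_0)$ is smooth iff $\delta_{a,i}(z)=0$ for at most two $i$, and on these small faces the surviving block structure forces $r\le 3$ (for item~(iv), second bullet) or $r=2$ (otherwise), together with the stated constraints $n_i=2$, $l_{i,3-j}=1$ etc. These last equalities come from requiring that the \emph{one} surviving coordinate in a block of size $2$ already makes $\delta_{a,i}(z)\ne 0$, which forces the exponent of the vanishing partner variable to be $1$ (if it were $\ge 2$, irredundancy would not help and the gradient component would still not rescue the rank) — the detailed equivalence is the content of each ``In that case'' clause.

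\textbf{Main obstacle.} The genuinely delicate part is item~(iv) with $i_1\ne i_2$: when \emph{two} different blocks each contribute one surviving variable, there are more ways for the trinomials to be satisfied (the case $n_{i_1}=n_{i_2}=1$, $r=2$ is separate), and the Jacobian rank count splits into the $r=2$ and $r=3$ subcases with asymmetric roles for the two blocks — one must check that it suffices for \emph{one} of $t\in\{1,2\}$ to have the good exponent pattern when $r=2$, but \emph{both} when $r=3$, because an extra relation in the Cox ring ($r=3$ means $r-1=2$ relations) demands the Jacobian to drop rank $2$, not just $1$. Keeping straight which blocks are ``free'' and which are ``pinned'' across all these subcases, and invoking irredundancy at exactly the right moments, is where the bookkeeping is heaviest; the rest is a routine application of the rank criterion for $J_g$.
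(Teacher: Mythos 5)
Your overall strategy coincides with the paper's: the paper disposes of this lemma in a single sentence, asserting that everything follows from the structure of the trinomials $g_I$ and the shape of the Jacobian $J_g$, and you correctly identify the two governing criteria (vanishing of the monomials $T_i^{l_i}$ on the coordinate stratum for the $\mathfrak{F}$-face condition; the count of blocks $i$ with $\delta_{a,i}(z)=0$ for smoothness). Your accounting for the delicate point in (iv) --- one good block suffices when $r=2$ but both are needed when $r=3$, because $r-1$ blocks are already entirely zero --- is the right one, even though the phrase ``demands the Jacobian to drop rank $2$'' says the opposite of what you mean (the Jacobian must \emph{attain} rank $r-1=2$).

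Two of your micro-justifications are, however, wrong as written and need repair before the case analysis closes. First, the gradient of $T_i^{l_i}$ does \emph{not} vanish ``whenever at least one coordinate in block $i$ is zero'': for $T_{i1}T_{i2}$ at $(0,1)$ the gradient is $(1,0)$. What you actually need in (i) is that \emph{all} coordinates of block $i$ vanish \emph{together with irredundancy} of $P$ (i.e. $l_{i1}+\dots+l_{in_i}\ge 2$, which is in force throughout the section), so that every component $l_{ij}\,T_i^{l_i}/T_{ij}$ still contains a vanishing factor; a block with $n_i=1$ and $l_{i1}=1$ would contribute a constant nonzero gradient and break the argument. Second, your $\mathfrak{F}$-face criterion for $\gamma_{ij}$ is stated backwards and self-contradictorily: a point of $\ol{Z}(\gamma_{ij})$ lies on $\ol{X}$ iff every trinomial containing block $i$ vanishes there, and each such trinomial reduces to a nonzero multiple of $T_i^{l_i}$ (the other two monomials die since their blocks are entirely zero); this is zero precisely when $T_i^{l_i}$ is \emph{not} a power of $T_{ij}$ alone, i.e. $n_i\ge 2$ --- not ``iff $T_i^{l_i}$ is a monomial in $T_{ij}$ alone.'' The same corrected principle yields (iii), and in (iv) one must additionally observe that for $r=2$ there is only the single trinomial $g_{0,1,2}$, in which two surviving nonzero monomials from blocks $i_1,i_2$ with $n_{i_1}=n_{i_2}=1$ can cancel against each other, producing the exceptional $\mathfrak{F}$-face case. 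With these points fixed, the bookkeeping you outline does give the stated equivalences.
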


\begin{proof}
The statements follow directly from the 
structure of the defining relations 
$g_0, \ldots, g_{r-2}$ of $R(A,P)$ 
and the shape of the Jacobian $J_g$.
\end{proof}

We now restrict to the case that the 
rational divisor class group 
$\Cl(X)_\QQ = K_\QQ$ of $X = X(A,P,u)$ 
is of dimension two. 
Set $\tx:=\Ample(X)$.
Then the effective cone $\Eff(X)$ is 
of dimension two and is uniquely 
decomposed into three convex sets
$$
\Eff(X) 
\ = \ 
\tp \cup \tx \cup \tm,
$$
such that $\tp, \tm$ do not intersect
the ample cone $\tx$ and 
$\tp \cap \tm$ consists of the origin.
Recall that $u \in \tx$ holds and
that, due to $\tx \subseteq \Mov(X)$,
each of $\tp$ and~$\tm$ 
contains at least two of the weights
$w_{ij},w_k$.
\begin{center}
    \begin{tikzpicture}[scale=0.6]
    \path[fill=gray!60!] (0,0)--(3.5,2.9)--(0.6,3.4)--(0,0);
    \path[fill, color=black] (1.4,2.4) circle (0.0ex)  node[]{$\tx$};
     \path[fill, color=black] (1,1.2) circle (0.5ex)  node[]{};
    \path[fill, color=black] (1,1.5) circle (0.0ex)  node[]{\small{$u$}};
    \draw (0,0)--(0.6,3.4);
    \draw (0,0) --(-2,3.4);
    \path[fill, color=black] (-0.35,2.65) circle (0.0ex)  node[]{\small{$\tau^+$}};
    \draw (0,0)  -- (3.5,2.9);
    \draw (0,0)  -- (3.5,0.5);
    \path[fill, color=black] (2.6,1.2) circle (0.0ex)  node[]{\small{$\tau^-$}};
    \path[fill, color=white] (4,1.9) circle (0.0ex);
  \end{tikzpicture}   
\end{center}

\begin{remark}
\label{rem:projFF}
Consider $X = X(A,P,u)$ such that 
$\Cl(X)_\QQ$ is of dimension two.
Then, for every $\mathfrak{F}$-face
$\{0\} \ne \gamma_0 \preceq \gamma$
precisely one of the following 
inclusions holds
$$
Q(\gamma_0) \ \subseteq \ \tp,
\qquad
\tx \ \subseteq \ Q(\gamma_0)^\circ,
\qquad
Q(\gamma_0) \ \subseteq \ \tm.
$$
The $\mathfrak{F}$-faces
$\gamma_0 \preceq \gamma$
satisfying the second inclusion 
are exactly those with $\gamma_0 \in \rlv(X)$,
i.e., the relevant ones.
\end{remark}

\begin{lemma}
\label{lem:tau}
Let $X = X(A, P, u)$ be non-toric with $\rk(\Cl(X))=2$. 
\begin{enumerate}
\item 
Suppose that $X$ is $\QQ$-factorial.
Then $w_{k} \notin \tx$ holds for all $1\le k \le m$
and for all $0\le i \le r$ with $n_i\ge2$ 
we have $w_{ij} \notin \tx$, where 
$1 \leq j \leq n_i$.
\item 
Suppose that $X$ is quasismooth, $m > 0$ 
holds and there is
$0 \le i_1 \le r$ with $n_{i_1} \ge 3$. 
Then the $w_{ij}, w_k$ with $n_i \ge 3$, 
$j=1, \ldots , n_i$ and $k=1, \ldots , m$
lie either all in $\tp$ or all in $\tm$.
\item 
Suppose that $X$ is quasismooth and there 
is $0 \le i_1 \le r$ with $n_{i_1} \ge 4$. 
Then the $w_{ij}$ with $n_i \geq 4$ and 
$j=1, \ldots , n_i$
lie either all in $\tp$ or all in $\tm$.
\item
Suppose that $X$ is quasismooth and there 
exist $0 \le i_1 < i_2 \le r$ with 
$n_{i_1}, n_{i_2} \ge 3$.
Then the $w_{ij}$ with $n_i \ge 3$, 
$j=1, \ldots , n_i$ lie either all 
in $\tp$ or all in $\tm$.
\item
Suppose that $X$ is quasismooth.
Then $w_1, \ldots, w_m$ lie
either all in $\tp$ or all in $\tm$.
\end{enumerate} 
\end{lemma}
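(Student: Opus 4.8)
The plan is to push all five parts through the trichotomy of Remark~\ref{rem:projFF}: a nonzero $\mathfrak{F}$-face $\gamma_0\preceq\gamma$ satisfies exactly one of $Q(\gamma_0)\subseteq\tp$, $\tx\subseteq Q(\gamma_0)^\circ$, $Q(\gamma_0)\subseteq\tm$, and the middle alternative is equivalent to $\gamma_0\in\rlv(X)$. We first record the bridge to quasismoothness: if $X$ is quasismooth and $\ol X(\gamma_0)$ contains a singular point of $\ol X$, then $\gamma_0\notin\rlv(X)$. Indeed, for $\gamma_0\in\rlv(X)$ we have $\ol X(\gamma_0)\subseteq\pi^{-1}(X)=\wh X$, and since $\wh X$ is open in $\ol X$ and smooth, every point of $\ol X(\gamma_0)$ is smooth in $\ol X$. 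Feeding this into Lemma~\ref{lem:sampleFF} yields the principle behind everything below: if two of the generators $w_{ij},w_k$ span an $\mathfrak{F}$-face whose orbit is not entirely smooth, then that face is non-relevant, so by the trichotomy its image cone lies in $\tp$ or in $\tm$, and hence both weights lie in the same one of $\tp$, $\tm$; similarly, a single such generator spanning a non-relevant $\mathfrak{F}$-ray lies in $\tp\cup\tm$.

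For (v) there is nothing to prove when $m\le 1$, so let $m\ge 2$. By Lemma~\ref{lem:sampleFF}(i) every $\gamma_{k_1,k_2}$ is an $\mathfrak{F}$-face with singular orbit, hence non-relevant, so $\cone(w_{k_1},w_{k_2})\subseteq\tp$ or $\subseteq\tm$; as each single $w_k$ already lies in $\tp\cup\tm$ (again by Lemma~\ref{lem:sampleFF}(i), applied to $\gamma_k$), all of $w_1,\dots,w_m$ end up in one of $\tp$, $\tm$. For (ii), fix $k=1$, available since $m>0$; for any $i,j$ with $n_i\ge 3$, Lemma~\ref{lem:sampleFF}(ii) shows that $\gamma_{ij,1}$ is an $\mathfrak{F}$-face whose orbit is not all smooth, since the smoothness criterion there requires $n_i=2$; hence $\gamma_{ij,1}$ is non-relevant and $w_{ij}$ lies in the same $\tau^{\pm}$ as $w_1$. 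Combined with (v), which places every $w_k$ in that $\tau^{\pm}$, this proves (ii).

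Parts (iii) and (iv) rely on the two-dimensional coordinate faces internal to one block, $\gamma_{ij_1,ij_2}$ (Lemma~\ref{lem:sampleFF}(iii)), and across two blocks, $\gamma_{i_1j_1,i_2j_2}$ (Lemma~\ref{lem:sampleFF}(iv)). When $n_i\ge 4$ the smoothness criterion in (iii) fails, as it requires $n_i=3$, so $\gamma_{ij_1,ij_2}$ is a non-relevant $\mathfrak{F}$-face and $w_{ij_1},w_{ij_2}$ share a $\tau^{\pm}$; when $n_{i_1},n_{i_2}\ge 3$ no smoothness case in (iv) applies, as each requires some $n_{i_t}=2$, so $\gamma_{i_1j_1,i_2j_2}$ is a non-relevant $\mathfrak{F}$-face and its two weights share a $\tau^{\pm}$. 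For (iii), the first statement settles each block with $n_i\ge 4$ internally and the second connects any two such blocks, so all $w_{ij}$ with $n_i\ge 4$ lie in one $\tau^{\pm}$. For (iv), the cross-block statement applied to the two given blocks $i_1<i_2$ with $n_{i_1},n_{i_2}\ge 3$ ties all their weights into a single $\tau^{\pm}$, and running it with $i_1$ held fixed against every further block of size at least three forces all $w_{ij}$ with $n_i\ge 3$ into that same $\tau^{\pm}$.

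Finally, (i) rests on $\QQ$-factoriality rather than quasismoothness: by Remark~\ref{rem:Qfact}(i) every $Q(\gamma_0)$ with $\gamma_0\in\rlv(X)$ is then two-dimensional, so $\tx=\bigcap_{\gamma_0\in\rlv(X)}Q(\gamma_0)^\circ$ is a finite intersection of relative interiors of two-dimensional cones, is nonempty as it contains $u$, and is therefore itself two-dimensional. For a one-dimensional $\mathfrak{F}$-face, which by Lemma~\ref{lem:sampleFF}(i),(ii) means $\gamma_k$ (always) or $\gamma_{ij}$ with $n_i\ge 2$, the cone $Q(\gamma_0)$ is at most a ray, so $\tx\subseteq Q(\gamma_0)^\circ$ cannot hold; the trichotomy then forces $Q(\gamma_0)\subseteq\tp\cup\tm$, that is, $w_k\notin\tx$ and $w_{ij}\notin\tx$ for $n_i\ge 2$. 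I expect the only genuinely delicate point to be the bookkeeping in (iii) and (iv): one has to check that under the size hypotheses on the $n_i$ the exceptional smooth-orbit cases in Lemma~\ref{lem:sampleFF}(iii),(iv) are truly excluded, and in (iv) one is forced to route through cross-block faces, since a within-block face $\gamma_{ij_1,ij_2}$ with $n_i=3$ might still carry a smooth orbit; everything else is a direct application of Remark~\ref{rem:projFF} and the smoothness dictionary of Lemma~\ref{lem:sampleFF}.
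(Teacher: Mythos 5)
Your proposal is correct and follows essentially the same route as the paper: identify the relevant coordinate faces via Lemma~\ref{lem:sampleFF}, rule out their relevance (by the singular-orbit criterion under quasismoothness, or by the dimension count under $\QQ$-factoriality), and then apply the trichotomy of Remark~\ref{rem:projFF} to force the weights into a common $\tau^{+}$ or $\tau^{-}$ by a connectivity argument over pairs. The only cosmetic remark is that for $m=1$ in part~(v) there is still the (trivial) claim $w_1\notin\tau_X$ to check, which your ray argument for $\gamma_k$ already covers.
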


\begin{proof}
We prove~(i).
By Lemma~\ref{lem:sampleFF}~(i) and (ii),
the rays $\gamma_{k}, \gamma_{ij} \preceq \gamma$
with $n_i \ge 2$  
are $\mathfrak{F}$-faces.
Since $X$ is $\QQ$-factorial, the 
ample cone $\tx \subseteq K_\QQ$
of $X$ is of dimension two
and thus
$\tx \subseteq Q(\gamma_{ij})^\circ$
or  
$\tx \subseteq Q(\gamma_{k})^\circ$
is not possible.
Remark~\ref{rem:projFF} yields 
the assertion.

We turn to~(ii). 
By Lemma~\ref{lem:sampleFF}~(i) and~(ii),
all $\gamma_{k}, \gamma_{ij}, \gamma_{ij,k} \preceq \gamma$
in question are $\mathfrak{F}$-faces and 
the corresponding pieces 
in $\ol{X}$ consist of singular points. 
Because $X$ is quasismooth, none of 
these $\mathfrak{F}$-faces is relevant.
Thus, Remark~\ref{rem:projFF} gives
$w_{i_11} \in \tp$ or $w_{i_11} \in \tm$;
say we have $w_{i_11} \in \tp$.
Then, applying again Remark~\ref{rem:projFF},
we obtain $w_k,w_{ij} \in \tp$ for 
$k = 1,\ldots, m$, 
all $i$ with $n_i \ge 3$ and $j=1,\ldots,n_i$.

Assertion~(iii) is proved analogously: 
treat first $\gamma_{i_11,i_12}$ with
Lemma~\ref{lem:sampleFF}~(iii),
then $\gamma_{i_11,ij}$ with
Lemma~\ref{lem:sampleFF}~(iii) and~(iv).
Similarly, we obtain~(iv) by treating  
first $\gamma_{i_11,i_21}$ and then all
$\gamma_{i_11,ij}$ and $\gamma_{i_21,ij}$
with Lemma~\ref{lem:sampleFF}~(iii) 
and~(iv).
Finally, we obtain~(v) 
using Lemma~\ref{lem:sampleFF}~(i).
\end{proof}

\begin{proposition}
\label{prop:4-1-m-pos}
Let $X = X(A,P,u)$ be non-toric, 
quasismooth and $\QQ$-factorial 
with $\rho(X)=2$.
Assume that there is an elementary 
big cone in $\Sigma_X$ and 
that we have $n_0 \ge  \ldots \ge n_r$.
If $m > 0$ holds, then there is 
a $\gamma_{ij,k} \in \rlv(X)$,
we have $r=2$ and the constellation 
of the $n_i$ is 
$(n_0,2,2)$,
$(2,2,1)$
or
$(2,1,1)$.
\end{proposition}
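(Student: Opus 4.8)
The plan is to use an elementary big cone, together with quasismoothness and $\QQ$-factoriality, to exhibit a relevant two-dimensional face of the form $\gamma_{ij,k}$, then to apply the smoothness criterion of Lemma~\ref{lem:sampleFF}(2) to force $r=2$, and finally to determine the $n_i$ by combining the ``one side'' statements of Lemma~\ref{lem:tau} with the $K$-homogeneity of the defining relation. Throughout write $\Eff(X)=\tp\cup\tx\cup\tm$; here $\tp,\tm$ are pointed convex cones with $\tp\cap\tm=\{0\}$, and since $\tx\subseteq\Mov(X)$ each of $\tp,\tm$ contains at least two of the weights $w_{ij},w_k$. After possibly interchanging $\tp$ and $\tm$, Lemma~\ref{lem:tau}(v) lets me assume $w_1,\dots,w_m\in\tm$; recall $m>0$. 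By Lemma~\ref{lem:tau}(i) every $w_{ij}$ with $n_i\ge 2$ lies in $\tp\cup\tm$, and by Corollary~\ref{cor:qg-bigtower} at most two of the $n_i$ equal one; as all $w_k$ lie in $\tm$, at least two of the weights $w_{ij}$ then lie in $\tp$.

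First I would produce the relevant face. Pick an elementary big cone $\sigma=\varrho_{0j_0}+\dots+\varrho_{rj_r}$ and write $\sigma=P(\gamma_0^{*})$ with $\gamma_0\in\rlv(X)$. Since $X$ is $\QQ$-factorial, $\Sigma_X$ is simplicial, so $\sigma$ has no redundant generators and $\gamma_0^{*}=\cone(e_{0j_0},\dots,e_{rj_r})$; hence $\gamma_0$ is the face of $\gamma$ spanned by all $e_{ij}$ with $j\ne j_i$ together with all $e_k$, and $Q(\gamma_0)=\cone(\{w_{ij};\,j\ne j_i\}\cup\{w_k\})$. If all $w_{ij}$ with $j\ne j_i$ lay in $\tm$, then $Q(\gamma_0)\subseteq\tm$ and $u\notin Q(\gamma_0)^{\circ}$, contradicting $\gamma_0\in\rlv(X)$; so some $w_{ij}$ with $j\ne j_i$ — hence with $n_i\ge 2$ — lies in $\tp$ (it is not in $\tx$, by Lemma~\ref{lem:tau}(i)). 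Fixing such an $(i,j)$ and any $k\le m$, the face $\gamma_{ij,k}$ is an $\mathfrak{F}$-face by Lemma~\ref{lem:sampleFF}(2); and since $\tx$ lies between $\tp$ and $\tm$ while $w_{ij}\in\tp$, $w_k\in\tm$, we get $\tx\subseteq\cone(w_{ij},w_k)^{\circ}=Q(\gamma_{ij,k})^{\circ}$, so that $\gamma_{ij,k}\in\rlv(X)$.

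Next I would read off $r=2$ and the constellation. The points of $\gamma_{ij,k}\in\rlv(X)$ lie in $\wh X$, which is smooth, so $\ol X(\gamma_{ij,k})$ consists of smooth points of $\ol X$; by Lemma~\ref{lem:sampleFF}(2) this forces $r=2$, $n_i=2$ and $l_{i,3-j}=1$. So we have three blocks $n_0\ge n_1\ge n_2$, at least one of size two. If $n_0\le 2$, then at most two $n_i$ equal one, hence $(n_0,n_1,n_2)\in\{(2,2,2),(2,2,1),(2,1,1)\}$ and the proof is finished. So assume $n_0\ge 3$. By Lemma~\ref{lem:tau}(ii) (here $m>0$ is used), all $w_{i'j'}$ with $n_{i'}\ge 3$ and all $w_k$ lie on one common side of $\tx$, necessarily $\tm$; in particular $w_{0j}\in\tm$ for all $j$. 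Now $g$ is homogeneous with $\deg g=\sum_j l_{i'j}w_{i'j}$ for $i'=0,1,2$, and the case $i'=0$ gives $\deg g\in\tm$. I would then run through $i'\in\{1,2\}$: if $n_{i'}\ge 3$ then $w_{i'j}\in\tm$ for all $j$; if $n_{i'}=1$ then $\deg g=l_{i'1}w_{i'1}$ is a positive multiple of $w_{i'1}$, so $w_{i'1}\in\tm$; and if $n_{i'}=2$ with both $w_{i'1},w_{i'2}$ in $\tp$, then $\deg g\in\tp$, hence $\deg g\in\tp\cap\tm=\{0\}$, which is absurd since $\deg g=\sum_j l_{0j}w_{0j}$ is a sum of nonzero vectors of the pointed cone $\tm$. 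Therefore each block contributes at most one weight to $\tp$, and only a block $i'\in\{1,2\}$ of size two can do so; as $\tp$ contains at least two weights, $n_1=n_2=2$, i.e.\ $(n_0,n_1,n_2)=(n_0,2,2)$.

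The main obstacle is the last step. Smoothness and quasismoothness alone do not rule out, for $n_0\ge 3$, the constellations $(n_0,2,1)$ or $(n_0,n_1,2)$ with $n_1\ge 3$: these survive every face-by-face smoothness check. What excludes them is precisely the interplay of Lemma~\ref{lem:tau}(ii) (which pushes all large blocks and all $w_k$ onto the $\tm$-side) with the homogeneity identity $\deg g=\sum_j l_{i'j}w_{i'j}$ and the pointedness of $\Eff(X)$. A smaller technical point, needed already in the second paragraph, is the precise identification of the face $\gamma_0$ attached to the elementary big cone, for which $\QQ$-factoriality of $X$ (hence simpliciality of $\Sigma_X$) is essential.
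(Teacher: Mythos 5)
Your proposal is correct and follows essentially the same route as the paper: produce a relevant face $\gamma_{ij,k}$, apply Lemma~\ref{lem:sampleFF}~(ii) to force $r=2$ and $n_i=2$, and then pin down the constellation via Lemma~\ref{lem:tau}~(ii) together with the $K$-homogeneity of the relation. The only real difference is in how you obtain a weight $w_{ij}$ with $n_i\ge 2$ on the opposite side of $\tx$ from the $w_k$: you read it off from the relevant face attached to the elementary big cone (using that $u$ lies in the relative interior of its image), whereas the paper derives it by a contradiction argument combining Corollary~\ref{cor:qg-bigtower}, Lemma~\ref{lem:tau}~(i) and homogeneity — both arguments are valid and rest on the same lemmas.
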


\begin{proof}
According to Lemma~\ref{lem:tau}~(v),
we may assume $w_1, \ldots, w_m \in \tp$.
We claim that there is a
$w_{i_1j_1} \in \tm$ with $n_{i_1} \ge 2$.
Otherwise, use Corollary~\ref{cor:qg-bigtower}
to see that there exist $w_{ij}$ with $n_i \ge 2$ 
and Lemma~\ref{lem:tau}~(i) to see that they 
all lie in $\tp$.
Since all monomials $T_{i}^{l_i}$ have 
the same degree in $K$, we obtain 
in addition $w_{i1} \in \tp$ for all $i$ 
with $n_i=1$. 
But then no weights $w_{ij}, w_k$ are 
left to lie in $\tm$, a contradiction.

Having verified the claim, we may take
a $w_{i_1j_1} \in \tm$ with $n_{i_1} \ge 2$.
Then $\gamma_{i_1j_1,1} \in \rlv(X)$ is as 
desired.
Moreover, Lemma~\ref{lem:sampleFF}~(ii)
yields $r=2$ and $n_{i_1}=2$.
If $n_0 \ge 3$ holds, then 
Lemma~\ref{lem:tau}~(ii) gives 
$w_{ij} \in \tp$ for all $i$ 
with $n_i \ge 3$.
Moreover, as all $T_{i}^{l_i}$ share 
the same $K$-degree, we have
$w_{i1} \in \tp$ for all $i$ 
with $n_i=1$. 
By the same reason, 
one of the $w_{i_11}$, $w_{i_12}$ 
must lie in $\tp$.
As $\tm$ contains at least two 
weights, there is a 
$w_{i_2j_2} \in \tm$  with $n_{i_2} = 2$
and $i_1 \ne i_2$.
Thus, the constellation of 
$n_0 \ge n_1 \ge n_2$ is as claimed.
\end{proof}

\begin{proposition}
\label{prop:4-1-m-0}
Let $X = X(A,P,u)$ be non-toric, 
quasismooth and $\QQ$-factorial 
with $\rho(X)=2$.
Assume that there is an elementary 
big cone in $\Sigma_X$ and 
that we have $n_0 \ge  \ldots \ge n_r$.
If $m = 0$ holds, then  there is 
a $\gamma_{i_1j_1,i_2j_2} \in \rlv(X)$,
we have $r \le 3$ and the constellation 
of the $n_i$ is one of the following
$$ 
\begin{array}{lcl}
r = 2 \colon & & (n_0,2,2), \ (3,2,1), \ (3,1,1), \
(2,2,2), \ (2,2,1), 
\\
r = 3 \colon & & (2,2,2,2), \ (2,2,2,1), \ (2,2,1,1).
\end{array}
$$
\end{proposition}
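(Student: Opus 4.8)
The plan is to run the argument of Proposition~\ref{prop:4-1-m-pos} in the absence of the free weights $w_k$, so that the relevant face $\gamma_{ij,k}$ used there is replaced by a relevant face $\gamma_{i_1j_1,i_2j_2}$ spanned by two $T$-rays. Throughout I would use that $\QQ$-factoriality forces $Q(\gamma_0)$ to be two-dimensional for every $\gamma_0\in\rlv(X)$ (Remark~\ref{rem:Qfact}~(i)), hence $\tx$ is two-dimensional and $\Eff(X)$ is a pointed two-dimensional cone, and that, since $\ol X(\gamma_0)\subseteq\wh X$ for $\gamma_0\in\rlv(X)$ (Remark~\ref{rem:rlv2}), quasismoothness makes $\ol X(\gamma_0)$ consist of smooth points of $\ol X$ for every relevant $\gamma_0$, so that Lemma~\ref{lem:sampleFF} applies to the faces appearing below. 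Two preliminary observations: first, Corollary~\ref{cor:qg-bigtower} together with the elementary big cone gives $n_i=1$ for at most two $i$, whence $n_0,\dots,n_{r-2}\ge 2$; second, if $M$ (resp.\ $P$) denotes the set of weights $w_{ij}$ with $n_i\ge 2$ lying in $\tm$ (resp.\ $\tp$), then $M\ne\emptyset\ne P$, because by Lemma~\ref{lem:tau}~(i) each such $w_{ij}$ lies in $\tp\cup\tm$, and if they all lay in $\tp$ then $\mu:=\deg(T_0^{l_0})=\sum_j l_{0j}w_{0j}$ would be a nonzero element of $\tp$ --- nonzero, since otherwise pointedness of $\tp$ would give $w_{01}=0$, impossible by Remark~\ref{rem:projFF} applied to the $\mathfrak{F}$-face $\gamma_{01}$ --- and then $w_{i1}=\mu/l_{i1}\in\tp$ for every $i$ with $n_i=1$, leaving $\tm$ without weights, a contradiction.

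Now I would distinguish two cases. \emph{Case A: one can choose $(i_1,j_1)\in M$ and $(i_2,j_2)\in P$ with $i_1\ne i_2$.} Then $\gamma_{i_1j_1,i_2j_2}$ is an $\mathfrak{F}$-face by Lemma~\ref{lem:sampleFF}~(iv), and since $w_{i_1j_1}\in\tm$, $w_{i_2j_2}\in\tp$, Remark~\ref{rem:projFF} forces $\tx\subseteq Q(\gamma_{i_1j_1,i_2j_2})^\circ$, so it is relevant; hence Lemma~\ref{lem:sampleFF}~(iv) yields $r\le 3$ together with $n_{i_1}=2$ or $n_{i_2}=2$ if $r=2$, and $n_{i_1}=n_{i_2}=2$ if $r=3$. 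For $r=2$ I would then rule out $n_1\ge 3$ (otherwise $n_0,n_1\ge 3$, Lemma~\ref{lem:tau}~(iv) puts all weights with $n_i\ge 3$ on one side, and the $\mu$-argument again forces either all weights on one side or $\mu\in\tp\cap\tm=\{0\}$), and when $n_2=1$ rule out $n_0\ge 4$ similarly via Lemma~\ref{lem:tau}~(iii), leaving the constellations $(n_0,2,2)$, $(3,2,1)$, $(2,2,1)$. For $r=3$ I would show that no block has $n_i\ge 3$: if $n_0\ge 3$, then all weights of block $0$ lie on one side --- otherwise two of them on opposite sides make $\gamma_{0j,0j'}$ a relevant $\mathfrak{F}$-face, smooth only for $r=2$ by Lemma~\ref{lem:sampleFF}~(iii) --- say in $\tp$, and pairing one of them with a weight of $M$ (which lies in a block $\ne 0$) yields a relevant $\gamma_{i_1j_1,0j_2}$, forcing $n_0=2$ by Lemma~\ref{lem:sampleFF}~(iv) at $r=3$, a contradiction; so every $n_i\in\{1,2\}$ with at most two equal to $1$, i.e.\ $(2,2,2,2)$, $(2,2,2,1)$ or $(2,2,1,1)$.

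\emph{Case B: every such choice has $i_1=i_2$.} Using Lemma~\ref{lem:tau}~(i) once more this forces all blocks $i\ne 0$ to have $n_i=1$, hence $r\le 2$ by the preliminary observation, so $r=2$ and the constellation is $(n_0,1,1)$ with $w_{0j}\in\tm$, $w_{0j'}\in\tp$ for some $j\ne j'$, so $n_0\ge 2$. If $n_0\ge 3$, then $\gamma_{0j,0j'}$ is a relevant $\mathfrak{F}$-face and Lemma~\ref{lem:sampleFF}~(iii) gives $n_0=3$; and $n_0=2$ is impossible, since then the $\gamma_0\in\rlv(X)$ with $P(\gamma_0^*)$ equal to the elementary big cone --- which has $r+1=3$ rays --- would be spanned by at most one of the $n_0+n_1+n_2=4$ basis vectors, contradicting two-dimensionality of $Q(\gamma_0)$. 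This leaves $(3,1,1)$. Collecting the cases gives $r\le 3$, the asserted list of constellations, and a relevant $\gamma_{i_1j_1,i_2j_2}$ in each surviving case.

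I expect the main obstacle to be the $r=3$ part of Case~A, where the absence of the free weights is felt most strongly: the decisive point is to first force all weights of a hypothetical block of size $\ge 3$ onto one side of $\tx$ and then pair them against a weight of $M$ to contradict Lemma~\ref{lem:sampleFF}~(iv). The remaining bookkeeping --- in particular keeping $\mu\ne 0$ via pointedness of $\tp$ and $\tm$ throughout the various $r=2$ subcases --- is routine but has to be carried out with some care.
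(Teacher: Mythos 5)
Your proof is correct, and while it relies on the same toolkit as the paper --- Lemma~\ref{lem:sampleFF}, Lemma~\ref{lem:tau}, Corollary~\ref{cor:qg-bigtower}, the common degree $\mu$ of the monomials $T_i^{l_i}$ and the decomposition $\Eff(X)=\tp\cup\tx\cup\tm$ --- it organizes the case distinction differently. You split according to whether the opposite-side weights with $n_{i_1},n_{i_2}\ge 2$ can be chosen in distinct blocks (equivalently, whether at least two blocks have $n_i\ge 2$), whereas the paper first proves $n_1\le 2$ and then cases on the value of $n_0$ (namely $n_0\ge 4$, $n_0=3$, $n_0\le 2$). Three micro-arguments also differ: you exclude $n_1\ge 3$ by forcing $\mu\in\tp\cap\tm=\{0\}$, where the paper applies Lemma~\ref{lem:sampleFF}~(iv) to $\gamma_{11,i_1j_1}$ and reads off $r=2$ to reach a contradiction; you exclude the constellation $(2,1,1)$ by a dimension count on the relevant face corresponding to the elementary big cone (its complementary face is spanned by at most one basis vector, so its $Q$-image cannot be two-dimensional, violating $\QQ$-factoriality), where the paper counts weights in $\tp$ and $\tm$; and you package the paper's repeatedly used ``all monomials share the same degree'' step into the single preliminary observation that $M$ and $P$ are both nonempty. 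Both routes prove the same statement with comparable effort; yours isolates somewhat more transparently why $r=3$ forces every block to have size at most two, while the paper's $n_0$-driven organization aligns more directly with the final list of constellations.
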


\begin{proof}
We first show $n_1 \le 2$.
Otherwise, we had $n_1 \ge 3$.
Then, according to Lemma~\ref{lem:tau}~(iv),
we may assume that all the $w_{ij}$ 
with $n_i \ge 3$ lie in $\tp$.
In particular, $w_{11}$, lies in 
$\tp$.
Because all monomials $T_{i}^{l_i}$ have 
the same degree in $K$,  
also  $w_{i1} \in \tp $ holds for all $i$ 
with $n_i=1$.
At least two weights $w_{i_1j_1}$ and $w_{i_2j_2}$
must belong to $\tm$.
For these, only $n_{i_1} = n_{i_2} = 2$ and 
$i_1 \ne i_2$ is possible.
Applying Lemma~\ref{lem:sampleFF}~(iv)
to $\gamma_{11,i_1j_1} \in \rlv(X)$ 
gives $r = 2$, contradicting $n_0\ge n_1 \ge 3$ and 
$n_{i_1} = n_{i_2} = 2$.

We treat the case $n_0 \ge 4$.
By Lemma~\ref{lem:tau}~(iii), we can
assume $w_{01}, \ldots, w_{0n_0} \in \tp$.
As before, we obtain $w_{i1} \in \tp$ 
for all $i$ with $n_i=1$
and we find two weights 
$w_{i_1j_1}, w_{i_2j_2} \in \tm$ 
with $n_{i_1} = n_{i_2} = 2$ and 
$i_1 \ne i_2$.
Then $\gamma_{01,i_1j_1} \in \rlv(X)$ is 
as wanted.
Lemma~\ref{lem:sampleFF}~(iv) gives
$r = 2$ and we end up with 
$(n_0,2,2)$.

Now let $n_0=3$.
Lemma~\ref{lem:tau}~(i) guarantees 
that no $w_{0j}$ lies in $\tx$.
If weights $w_{0j}$ occur in both cones
$\tp$ and $\tm$, say 
$w_{01} \in \tp$ and $w_{02} \in \tm$,
then $\gamma_{01,02}$ is as wanted.
Lemma~\ref{lem:sampleFF}~(iii) yields
$r=2$ and we obtain the constellations
$(n_0,2,2)$, $(3,2,1)$ and $(3,1,1)$.
So, assume that all weights $w_{0j}$ 
lie in one of $\tp$ and $\tm$, 
say in~$\tp$.
Then we proceed as in the case 
$n_0 \ge 4$ to obtain a
$\gamma_{01,i_1j_1} \in \rlv(X)$
and $r=2$ with the constellation
$(3,2,2)$.

Finally, let $n_0 \le 2$. 
Corollary~\ref{cor:qg-bigtower}
yields $n_0 = 2$.
According to Lemma~\ref{lem:tau}~(i) 
no $w_{ij}$ with $n_i=2$ lies in $\tx$.
So, we may assume $w_{01} \in \tp$.
Moreover, all $w_{ij}$ with $n_i=1$ 
lie together in one $\tp$, $\tx$ 
or in $\tm$.
Since each of $\tp$ and $\tm$
contains two weights, 
we obtain  $n_1=2$ and some 
$\gamma_{0j_1,1j_2}$ is as wanted.
Lemma~\ref{lem:sampleFF}~(iv) shows
$r \le 3$.   
\end{proof}

We retrieve a special case of~\cite[Cor. 4.18]{Deb}.

\begin{corollary}
\label{cor:clfree}
Let $X = X(A,P,u)$ be smooth with $\rho(X)=2$.
Then the divisor class group $\Cl(X)$ is 
torsion-free.
\end{corollary}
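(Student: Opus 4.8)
The plan is to exploit the factoriality criterion of Remark~\ref{rem:Qfact}~(ii). Fix a representation $X \cong X(A,P,u)$ with $P$ irredundant; by Corollary~\ref{cor:ebc}, since $\rho(X)=2\le 4$, there is an elementary big cone $\sigma=\varrho_{0j_0}+\ldots+\varrho_{rj_r}\in\Sigma_X$, say $\sigma=P(\gamma_0^*)$ with $\gamma_0\in\rlv(X)$. Smoothness of $X$ at the points of $X(\gamma_0)$ forces, via Remark~\ref{rem:Qfact}~(ii), that $Q$ maps $\lin(\gamma_0)\cap\ZZ^{n+m}$ onto $\Cl(X)=K$; equivalently, the complementary face $\gamma_0^*$, whose rays are the $e_{ij_i}$ together with the free vectors $e_k$ and the leftover $e_{ij}$, has the property that $Q$ restricted to the sublattice generated by these canonical basis vectors is surjective onto $K$. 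In matrix terms, the submatrix of columns of $P^*$ dual to this face has cokernel $K$, and surjectivity of $Q$ on that coordinate sublattice means this submatrix already realizes all of $\ZZ^{n+m}/\mathrm{im}(P^*)$.

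The key step is to translate this onto the structure of $P$. Writing the columns of $\sigma^*$, the condition "$Q$ maps $\lin(\gamma_0)\cap\ZZ^{n+m}$ onto $K$" says precisely that the $\ZZ$-span of the columns $v_{ij}$, $v_k$ indexed by $\gamma_0^*$, together with $\mathrm{im}(P)\subseteq\ZZ^{r+s}$ — no wait: more directly, one uses that the cokernel $K=\ZZ^{n+m}/\mathrm{im}(P^*)$ is computed from $P$, and the surjectivity onto $K$ of the projection of a coordinate sublattice is equivalent to a certain maximal minor of $P$ being $\pm1$. Concretely, because $\sigma$ is an elementary big cone and $X$ is smooth, the cone $\sigma$ is a regular (unimodular) cone of $\Sigma_X$: its primitive ray generators $v_{0j_0},\ldots,v_{rj_r}$ and the remaining relevant columns form part of a $\ZZ$-basis of $\ZZ^{r+s}$. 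One then checks that the quotient $K=\ZZ^{n+m}/\mathrm{im}(P^*)$ has no torsion: a torsion element would obstruct the surjectivity required in Remark~\ref{rem:Qfact}~(ii) at the smooth point $x\in X(\gamma_0)$. More precisely, if $\Cl(X)$ had torsion, then $Q(\lin(\gamma_0)\cap\ZZ^{n+m})$, being a subgroup mapping onto the free quotient $K/K_{\mathrm{tors}}$, would still need to hit $K_{\mathrm{tors}}$; smoothness demands it does, but then $\rk(\gamma_0)=s+1$ forces $\lin(\gamma_0)\cap\ZZ^{n+m}$ to have rank exactly $s+1=\dim X$, while $\rk K = \rho(X)=2$, so the relevant minor equality $n+m-(r+s)=2$ together with a unimodularity count rules out any torsion.

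The main obstacle is doing this cokernel/unimodularity bookkeeping cleanly: one must argue that, among the relevant faces realizing smooth points, at least one has $\lin(\gamma_0)\cap\ZZ^{n+m}$ of full rank $s+1$ mapping \emph{isomorphically} (after passing to a complement) onto a rank-$2$ subgroup, and that this isomorphism being onto all of $\Cl(X)$ forces $\Cl(X)$ free. I expect the cleanest route is: pick the smooth point $x\in X(\gamma_0)$ on the elementary big cone from Corollary~\ref{cor:ebc}; by Remark~\ref{rem:Qfact}~(ii) the map $Q\colon\lin(\gamma_0)\cap\ZZ^{n+m}\to\Cl(X)$ is surjective; since $\Cl(X)$ is generated by finitely many $Q(e_{ij}),Q(e_k)$ this already gives generation, and surjectivity from a lattice $\lin(\gamma_0)\cap\ZZ^{n+m}$ onto $\Cl(X)$ combined with the equality $\dim(X)+\rho(X) = (s+1)+2 = n+m-(r-1)$ — i.e.\ $\rk(\lin\gamma_0\cap\ZZ^{n+m}) + \rk\Cl(X) = \rk\ZZ^{n+m} - \rk(\mathrm{im}\,P^*)$ holds with equality of ranks — pins down that $\lin(\gamma_0)\cap\ZZ^{n+m}$ is a direct complement, so $\Cl(X)\cong\ZZ^{n+m}/(\lin(\gamma_0)\cap\ZZ^{n+m})$ is a subquotient of $\ZZ^{n+m}$ that is, by the complementation, actually free. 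The bulk of the work is verifying that the rank count is an equality, which uses exactly that $\sigma$ is elementary big (so $\gamma_0^*$ has the right number of rays) and that $P$ has full rank $r+s$.
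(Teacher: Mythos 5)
There is a genuine gap. Your argument applies Remark~\ref{rem:Qfact}~(ii) to the relevant face $\gamma_0$ complementary to the elementary big cone $\sigma=\varrho_{0j_0}+\ldots+\varrho_{rj_r}$. But that face has dimension $n+m-(r+1)=s+1=\dim(X)$, not $2$, so smoothness only yields a surjection from a rank-$(s+1)$ lattice onto $K$. A finitely generated abelian group of rank two generated by $s+1\ge 3$ elements can perfectly well have torsion, so this surjectivity does not rule anything out. The ``rank count equality'' you invoke to repair this is arithmetically false: $\rk\bigl(\lin(\gamma_0)\cap\ZZ^{n+m}\bigr)+\rk\Cl(X)=(s+1)+2$, whereas $\rk\,\ZZ^{n+m}-\rk\,\mathrm{im}(P^*)=(n+m)-(r+s)=2$; these agree only in degenerate cases. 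Likewise, the claimed isomorphism $\Cl(X)\cong\ZZ^{n+m}/\bigl(\lin(\gamma_0)\cap\ZZ^{n+m}\bigr)$ does not hold ($\Cl(X)=\ZZ^{n+m}/\mathrm{im}(P^*)$, and $\lin(\gamma_0)\cap\ZZ^{n+m}$ is not a complement of $\mathrm{im}(P^*)$), and ``being a subquotient of $\ZZ^{n+m}$'' is no obstruction to torsion.

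The missing ingredient is a \emph{two-dimensional} relevant face. The paper's proof does start from Corollary~\ref{cor:ebc} as you do, but then feeds the elementary big cone into Propositions~\ref{prop:4-1-m-pos} and~\ref{prop:4-1-m-0}, whose case analysis (using quasismoothness, $\QQ$-factoriality and the position of the weights relative to $\tau_X$) produces a face $\gamma_{ij,k}$ or $\gamma_{i_1j_1,i_2j_2}$ lying in $\rlv(X)$. For such a face, Remark~\ref{rem:Qfact}~(ii) says the two corresponding weights generate $K$; since $\rk K=\rho(X)=2$, a surjection $\ZZ^2\to K$ must be an isomorphism, so $K\cong\ZZ^2$ is torsion-free. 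Without passing through those two propositions (or some equivalent argument producing a two-element generating set of $K$), the conclusion does not follow from the elementary big cone alone.
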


\begin{proof}
By Corollary~\ref{cor:ebc}, there is an elementary big 
cone in $\Sigma_X$.
Thus, Propositions~\ref{prop:4-1-m-pos} 
and~\ref{prop:4-1-m-0} deliver a twodimensional
$\gamma_0 \in \rlv(X)$.
The corresponding weights generate $K$ as a group.
This gives $\Cl(X) \cong K \cong \ZZ^2$.
\end{proof}

\begin{proof}[Proof of Proposition~\ref{prop:smooth-rho2}]
The variety $X$ is isomorphic to some $X(A,P,u)$, 
where after suitable admissible operations
we may assume $n_0 \ge \ldots \ge n_r$.
Thus, Propositions~\ref{prop:4-1-m-pos} 
and~\ref{prop:4-1-m-0} apply.
\end{proof}


\section{Proof of Theorems~\ref{thm:main1}, 
\ref{thm:main2} and~\ref{thm:main3}}
\label{sec:classif}

We prove Theorems~\ref{thm:main1},~\ref{thm:main2}
and~\ref{thm:main3} by going through
the cases established in 
Proposition~\ref{prop:smooth-rho2}.
The notation is the same as in 
Sections~\ref{sec:cpl1} and~\ref{sec:firstStruct}.
So, we deal with a smooth projective  
variety $X = X(A,P,u)$ of Picard number 
$\rho(X) = 2$ coming with an effective torus 
action of complexity one.

From Corollary~\ref{cor:clfree} we know that 
$\Cl(X) = K =  \ZZ^2$ holds.
With $w_{ij} = Q(e_{ij})$ and $w_k = Q(e_k)$,
the columns of the $2 \times (n+m)$ 
degree matrix $Q$ will be written as 
$$
w_{ij} \ = \ (w_{ij}^1,w_{ij}^2) \ \in \ \ZZ^2,
\qquad\qquad
w_{k} \ = \ (w_{k}^1,w_{k}^2) \ \in \ \ZZ^2.
$$
Recall that all relations $g_0, \ldots, g_{r-2}$ 
of $R(A,P)$ have the same degree in $K = \ZZ^2$; 
we set for short
$$
\mu 
\ = \
(\mu^1, \mu^2) 
\ := \ 
\deg(g_0) 
\  \in \
\ZZ^2.
$$
We will frequently work with the faces of the 
orthant $\gamma = \QQ_{\ge 0}^{n+m}$ 
introduced in Lemma~\ref{lem:sampleFF}:
$$ 
\gamma_{ij,k} \ = \ \cone(e_{ij},e_k) \ \preceq \ \gamma,
\qquad
\gamma_{i_1j_1,i_2j_2} \ = \ \cone(e_{i_1j_1},e_{i_2j_2}) \ \preceq \ \gamma.
$$

\begin{remark}
\label{rem:Q}
Consider a face $\gamma_0 \preceq \gamma$ 
of type $\gamma_{ij,k}$ or $\gamma_{i_1j_1,i_2j_2}$.
Write $e'$, $e''$ for the two generators of $\gamma_0$ 
and  $w' = Q(e')$, $w'' = Q(e'')$ for the corresponding 
columns of the degree matrix $Q$ such that 
$(w',w'')$ is positively oriented in $\ZZ^2$.
Then Remark~\ref{rem:Qfact} tells us
\begin{eqnarray*}
\gamma_0 \ \in \ \rlv(X)
& \Rightarrow &  
\det(w',w'') \ = \ 1.
\end{eqnarray*}
So, if $\gamma_0 \in \rlv(X)$, then we 
may multiply  $Q$ from 
the left with a unimodular \mbox{$2 \times 2$} 
matrix transforming $w'$ and $w''$ 
into $(1,0)$ and $(0,1)$.
This change of coordinates on $\Cl(X)$ 
does not affect the defining data $(A,P)$.
If $w' = (1,0)$ and $w'' = (0,1)$
hold and $e \in \gamma$ is a
canonical basis vector
with corresponding column $w = Q(e)$, 
then we have 
\begin{align*}
\cone(e',e) \in \rlv(X) 
\quad &\Rightarrow \quad
w  =  (w^1,1),
\\
\cone(e'',e) \in \rlv(X) 
\quad &\Rightarrow \quad
w  =  (1,w^2).
\end{align*}
\end{remark}

We are ready to go through the 
cases of Proposition~\ref{prop:smooth-rho2};
we keep the numbering introduced there.

\begin{case}
We have $r=2$, $m \ge 0$ and the list of $n_i$ is 
$(n_0,2,2)$, where $n_0 \ge 3$. 
This leads to No.~1 and 
No.~2 in Theorems~\ref{thm:main1} and~\ref{thm:main2}.
\end{case}

\begin{proof}
In a first step we show that there occur 
weights $w_{0j}$ in each of $\tp$ and $\tm$.
Otherwise, we may assume that all $w_{0j}$ 
lie in $\tp$, see Lemma~\ref{lem:tau}~(i).
Then Lemma~\ref{lem:tau}~(ii) says that 
also all $w_k$ lie in $\tp$.
Moreover, we have $\deg(T_i^{l_i}) \in \tp$ 
for $i=0,1,2$.
Thus, we may assume $w_{11},w_{21} \in \tp$ 
and obtain $w_{12},w_{22} \in \tm$, as there 
must be at least two weights in $\tm$.
Finally, we may assume that 
$\cone(w_{01},w_{12})$ contains
$w_{02},\ldots,w_{0n_0}$ and 
$w_{22}$.
Applying Remark~\ref{rem:Q}
first to $\gamma_{01,12}$, then to 
all $\gamma_{0j,12}$, $\gamma_{12,k}$
and $\gamma_{01,22}$, $\gamma_{12,21}$ 
yields
$$
{
Q \ = \ \left[ 
\begin{array}{cccc|cc|cc|ccc}
0 & w_{02}^1 & \dots & w_{0n_0}^1 & w_{11}^1 & 1 & w_{21}^1 & 1 & w_1^1 & \dots & w_m^1
\\
1 & 1 & \dots & 1 & w_{11}^2 & 0 & 1 & w_{22}^2 & 1 & \dots  & 1 
\end{array}
\right],
}
$$
where $w_{0j}^1 \geq 0$ and $w_{22}^2 \geq 0$.
Since $\gamma_{01,12}, \gamma_{01,22} \in \rlv(X)$ holds,
Lemma~\ref{lem:sampleFF}~(iv) implies $l_{11}=l_{21}=1$.
Applying $P \cdot Q^t = 0$ to the first row of $P$ 
and the second row of $Q$ gives
$$
0
\ < \ 
3
\ \le \ 
n_0
\ \le \ 
l_{01} + \ldots + l_{0n_0}
\ = \ 
w_{11}^2
\ = \ 
1 + w_{22}^2w_{11}^1,
$$
where the last equality is due to 
$\gamma_{11,22} \in \rlv(X)$ and thus
$\det(w_{22},w_{11})=1$.
We conclude $w^2_{22} > 0$ and $w^1_{11} > 0$.
Because of $\gamma_{0j,22} \in \rlv(X)$,
we obtain $\det(w_{22},w_{0j})=1$.
This implies $w_{0j}^1 = 0$ for all 
$j=2,\ldots,n_0$.
Applying $P \cdot Q^t = 0$ to the first row of~$P$ 
and the first row of $Q$ gives
$w^1_{11} + l_{12} = 0$; a contradiction.

Knowing that each of $\tp$ and $\tm$ contains
weights $w_{0j}$, 
we can assume $w_{01}, w_{02} \in \tp$ and $w_{03} \in \tm$.
Lemma~\ref{lem:tau}~(ii) and~(iii) show
$n_0=3$ and $m=0$.
There is at least one other weight in $\tm$, 
say $w_{11} \in \tm$.
Applying Lemma~\ref{lem:sampleFF}~(iii) 
to $\gamma_{0j,03} \in \rlv(X)$ for $j=1,2$
and~(iv) to suitable 
$\gamma_{0j_1,i_2j_2} \in \rlv(X)$,
we obtain
$$ 
l_{01} = l_{02} = 1,
\qquad
l_{11} = l_{12} = 1,
\qquad
l_{21} = l_{22} = 1.
$$
Moreover, Remark~\ref{rem:Q} 
applied to $\gamma_{01,03}$ 
as well as $\gamma_{02,03}$ and $\gamma_{01,11}$
brings the matrix~$Q$ into the shape
$$
{
Q 
\ = \ 
\left[ 
\begin{array}{ccc|cc|cc}
0 & w_{02}^1 & 1 & 1 & w_{12}^1 & w_{21}^1 & w_{22}^1
\\
1 & 1 & 0 & w_{11}^2 & w_{12}^2 & w_{21}^2 & w_{22}^2
\end{array}
\right].
}
$$
Observe that the second component of the 
degree of the relation is $\mu^2 = 2$.
The possible positions of the weights $w_{2j}$ 
define three subcases:

\vspace{0.3cm}
   \begin{tikzpicture}[scale=0.6]
    \path[fill=gray!60!] (0,0)--(3.5,2.9)--(1.3,3.4)--(0,0);
    \path[fill, color=black] (1.5,2) circle (0.0ex)  node[]{\small{$\tx$}};
    \path[fill, color=black] (-0.4,1.9) circle (0.0ex)  node[]{\tiny{$w_{01}$}};
    \path[fill, color=black] (-0.15,1.45) circle (0.0ex)  node[]{\tiny{$w_{02}$}};
    \path[fill, color=black] (-0.25,2.8) circle (0.0ex)  node[]{\small{$\tp$}};
    \draw (0,0)--(1.3,3.4);
    \draw (0,0) --(-2,3.4);
     \path[fill, color=black] (2.6,1.15) circle (0.0ex)  node[]{\tiny{$w_{03} \ w_{11}$}};
    \path[fill, color=black] (2.1,0.7) circle (0.0ex)  node[]{\tiny{$w_{21} \ w_{22}$}};
    \path[fill, color=black] (3.7,1.75) circle (0.0ex)  node[]{\small{$\tm$}};
    \draw (0,0)  -- (3.5,2.9);
    \draw (0,0)  -- (4.5,0.7);
    \path[fill, color=black] (1,-1) circle (0.0ex)  node[]{\small{(i)}};
  \end{tikzpicture}
    \
  \begin{tikzpicture}[scale=0.6]
    \path[fill=gray!60!] (0,0)--(3.5,2.9)--(1.3,3.4)--(0,0);
    \path[fill, color=black] (1.5,2) circle (0.0ex)  node[]{\small{$\tx$}};
    \path[fill, color=black] (-0.25,2.15) circle (0.0ex)  node[]{\tiny{$w_{01} \ w_{02}$}};
    \path[fill, color=black] (-0.25,1.8) circle (0.0ex)  node[]{\tiny{$w_{22}$}};
    \path[fill, color=black] (-0.25,3) circle (0.0ex)  node[]{\small{$\tp$}};
    \draw (0,0)--(1.3,3.4);
    \draw (0,0) --(-2,3.4);
    \path[fill, color=black] (2.1,1.15) circle (0.0ex)  node[]{\tiny{$w_{03}$}};
    \path[fill, color=black] (2.1,0.7) circle (0.0ex)  node[]{\tiny{$w_{11} \ w_{21}$}};
    \path[fill, color=black] (3.5,1.7) circle (0.0ex)  node[]{\small{$\tm$}};
    \draw (0,0)  -- (3.5,2.9);
    \draw (0,0)  -- (4.5,0.7);
    \path[fill, color=black] (1,-1) circle (0.0ex)  node[]{\small{(ii)}};
  \end{tikzpicture}
    \
  \begin{tikzpicture}[scale=0.6]
    \path[fill=gray!60!] (0,0)--(3.5,2.9)--(1.3,3.4)--(0,0);
    \path[fill, color=black] (1.5,2) circle (0.0ex)  node[]{\small{$\tx$}};
    \path[fill, color=black] (-0.25,2.35) circle (0.0ex)  node[]{\tiny{$w_{01} \ w_{02}$}};
    \path[fill, color=black] (-0.25,2) circle (0.0ex)  node[]{\tiny{$w_{21} \ w_{22}$}};
    \path[fill, color=black] (-0.25,3.1) circle (0.0ex)  node[]{\small{$\tp$}};
    \draw (0,0)--(1.3,3.4);
    \draw (0,0) --(-2,3.4);
    \path[fill, color=black] (2.2,1.05) circle (0.0ex)  node[]{\tiny{$w_{03}$}};
    \path[fill, color=black] (1.7,0.7) circle (0.0ex)  node[]{\tiny{$w_{11}$}};
    \path[fill, color=black] (3.5,1.7) circle (0.0ex)  node[]{\small{$\tm$}};
    \draw (0,0)  -- (3.5,2.9);
    \draw (0,0)  -- (4.5,0.7);
    \path[fill, color=black] (1,-1) circle (0.0ex)  node[]{\small{(iii)}};
  \end{tikzpicture}

\noindent
We will see that cases~(i) and~(ii) give No.~1 and No.~2 
of Theorem~\ref{thm:main1} respectively
and case~(iii) will not provide any smooth variety.

In~(i) we assume $w_{21},w_{22}\in\tm$.
Then $\gamma_{01,21}, \gamma_{01,22} \in \rlv(X)$ 
holds and Remark~\ref{rem:Q} shows 
$w_{21}^1=w_{22}^1=1$.
This implies $\mu^1=2$.
Similarly, considering $\gamma_{02,21}, \gamma_{02,22} \in \rlv(X)$,
we obtain $w_{02}^1=0$ or $w_{21}^2=w_{22}^2=0$.
The latter contradicts $\mu^2=2$ and thus
$w_{02}^1=0$ holds.
We conclude $l_{03}=\mu^1=2$.
Furthermore $w_{12}^1=\mu^1-w_{11}^1=1$.
Together, we have
$$
g_0 
\ = \ 
T_{01}T_{02}T_{03}^2+T_{11}T_{12}+T_{21}T_{22},
\qquad
Q 
\ = \ 
\left[ 
\begin{array}{ccc|cc|cc}
0 & 0 & 1 & 1 & 1 & 1 & 1 
\\ 
1 & 1 & 0 & a & 2-a & b & 2-b
\end{array}
\right],
$$
where $a, b \in \ZZ$.
Observe that $w_{12} \in \tm$
must hold;
otherwise, $\gamma_{03,12} \in \rlv(X)$ 
and Remark~\ref{rem:Q} yields $w_{12}^2=1$,
contradicting $w_{12}=(1,1)=w_{11}\in\tm$.
The semiample cone is 
$\SAmple(X) = \cone((0,1), (1,d))$, 
where $d=\max(a, 2-a, b, 2-b)$.
The anticanonical class is $-\mathcal{K}_X=(3,4)$.
Hence $X$ is an almost Fano variety if and only if $d=1$,
which is equivalent to $a=b=1$.
In this situation $X$ is already a Fano variety.

In (ii) we assume $w_{21} \in \tm$ and $w_{22} \in \tp$.
Remark~\ref{rem:Q}, applied to 
$\gamma_{01,21},  \gamma_{03,22} \in \rlv(X)$
shows $w_{21}^1 = w_{22}^2 = 1$.
The latter implies $w_{21}^2= \mu^2 -w_{22}^2=1$.
We claim $w_{11}^2 \ne 0$.
Otherwise, we have $w_{12}^2 = \mu^2 = 2$.
This gives $\det(w_{03}, w_{12})=2$.
We conclude $\gamma_{03,12} \not\in \rlv(X)$
and $w_{12} \in \tm$.
Then $\gamma_{01,12} \in \rlv(X)$ 
implies $w_{12}^1=1$.
Thus, $w_{22}=(1,1)$ and $w_{12}=(1,2)$ hold,
contradicting $w_{22} \in  \tp$ 
and $w_{12} \in  \tm$.
Now, $\gamma_{11,22} \in \rlv(X)$ yields
$w_{11}^2w_{22}^1=0$ and thus $w_{22}^1=0$. 
We obtain $\mu^1=1$ and, as a consequence
$l_{03}=1, w_{02}^1=0$ and $w_{12}^1=0$. 
Therefore $w_{12} \in \tp$ holds.
Now $\gamma_{03,12}\in\rlv(X)$ implies 
$w_{12}^2=1$ and $w_{11}^2=\mu^2-w_{12}^2=1$.
We arrive at
$$
g_0 
\ = \ 
T_{01}T_{02}T_{03}+T_{11}T_{12}+T_{21}T_{22},
\qquad
Q 
\ = \  
\left[ 
\begin{array}{ccc|cc|cc}
0 & 0 & 1 & 1 & 0 & 1 & 0 \\ 1 & 1 & 0 & 1 & 1 & 1 & 1
\end{array}
\right].
$$ 
The anticanonical class is $-\mathcal{K}_X=(2,4)$
and the semiample cone is $\SAmple(X) = \cone((0,1), (1,1))$.
In particular $X$ is Fano.

We turn to (iii), where both $w_{21}$ 
and $w_{22}$ lie in $\tp$.
The homogeneity of $g_0$ yields 
$w_{12} \in \tp$.
Thus, $\gamma_{03,12},\gamma_{03,21},\gamma_{03,22} \in \rlv(X)$ 
holds and Remark~\ref{rem:Q} implies 
$w_{12}^2=w_{21}^2=w_{22}^2=1$.
We conclude $w_{11}^2 = \mu^2-w_{12}^2=1$. 
Similarly, $\gamma_{02,11}, \gamma_{11, 21}, \gamma_{11, 22} \in \rlv(X)$
yields $w_{02}^1=w_{21}^1= w_{22}^1=0$.
This gives
$0 \neq l_{03} = \mu^1=w_{21}^1+ w_{22}^1=0$
which is not possible.
\end{proof}

\begin{case}
We have $r=2$, $m = 0$, $n = 6$
and the list of $n_i$ is $(3,2,1)$.
This leads to No. $3$ in 
Theorems~\ref{thm:main1} and~\ref{thm:main2}.
\end{case}

\begin{proof}
Since there are at least two weights in 
$\tp$ and another two in $\tm$,
we can assume $w_{01},w_{02}\in\tp$ 
and $w_{03},w_{12}\in\tm$.
By Lemma~\ref{lem:sampleFF}~(iii) and~(iv) 
we obtain $l_{01}=l_{02}=l_{11}=l_{12}=1$.
We may assume that $\cone(w_{01},w_{03})$ 
contains $w_{02}$.
Applying Remark~\ref{rem:Q} firstly to $\gamma_{01,03}$,
then to $\gamma_{02,03}$ and $\gamma_{01,12}$,
we obtain
$$
Q \ = \ \left[ 
\begin{array}{ccc|cc|c}
0 & w_{02}^1 & 1 & w_{11}^1 & 1 & w_{21}^1 
\\ 
1 & 1 & 0 & w_{11}^2 & w_{12}^2 & w_{21}^2
\end{array}
\right],
$$
where $w_{02}^1\ge0$.
For the degree $\mu$ of $g_0$, we 
have $\mu^2 = 2$. 
We conclude $w_{11}^2 = 2-w_{12}^2$ 
and 
$l_{21}w_{21}^2 = 2$ 
which in turn 
implies $l_{21}=2$ and $w_{21}^2=1$.
For $\gamma_{02,12} \in \rlv(X)$,
Remark~\ref{rem:Q} gives 
$\det(w_{12}, w_{02}) = 1$
and thus $w_{02}^1=0$ or $w_{12}^2=0$
must hold.

We treat the case $w_{02}^1=0$.
Then $\mu=(l_{03},2)$ holds.
We conclude $w_{11}^1=l_{03}-1$
and $w_{21}^1=l_{03}/2$.
With $c := l_{03}/2  \in\ZZ_{\ge1}$
and $a := w_{12}^2 \in \ZZ$, we 
obtain the degree matrix
$$
Q \ = \ 
\left[ 
\begin{array}{ccc|cc|c}
0 & 0 & 1 & 2c-1 & 1 & c 
\\ 1 & 1 & 0 & 2-a & a & 1
\end{array}
\right].
$$
We show $w_{11} \in \tm$.
Otherwise, $w_{11}\in\tp$ holds,
we have $\gamma_{03,11} \in \rlv(X)$  
and Remark~\ref{rem:Q} yields $a=1$.
But then $w_{01} = (0,1) \in \tp$ and
$w_{11} = (2c-1,1) \in \tp$ 
imply $w_{12}=(1,1) \in\tp$;
a contradiction.
So we have $w_{11}\in\tm$.
Then $\gamma_{01,11}\in\rlv(X)$ holds.
Remark~\ref{rem:Q} gives 
$\det(w_{11},w_{01}) = 1$ 
which means $c=1$ and,
as a consequence, $l_{03}=2$.
Together, we have
$$
g_0 
\ = \ 
T_{01}T_{02}T_{03}^2+T_{11}T_{12}+T_{21}^2,
\qquad
Q \ = \ \left[ 
\begin{array}{ccc|cc|c}
0 & 0 & 1 & 1 & 1 & 1 
\\ 
1 & 1 & 0 & 2-a & a & 1
\end{array}
\right],
$$
where we may assume $a \ge 2-a$ that means 
$a \in \ZZ_{\ge 1}$.
The semiample cone is $\SAmple(X)=\cone((0,1),(1,a))$,
and the anticanonical class is $-\mathcal{K}_X=(2,3)$.
In particular, $X$ is an almost Fano variety if and only 
$a=1$ holds.
In this situation $X$ is already a Fano variety.

We turn to the case $w_{12}^2=0$. 
Here, $w_{11}^2=\mu^2=2$ leads to
$\det(w_{03},w_{11})=2$ and 
thus the $\mathfrak{F}$-face 
$\gamma_{03,11}$ does not belong 
to $\rlv(X)$; see Remark~\ref{rem:Q}.
Hence $w_{11} \in \tm$ and thus 
$\gamma_{01,11}\in\rlv(X)$.
This gives $w_{11}^1=1$ and thus
$w_{11}=(1,2)$.
Because of $w_{02}=(w_{02},1)\in\tp$,
we must have $w_{02}^1=0$ and
the previous consideration applies.
\end{proof}

\begin{case}
We have $r=2$, $m = 0$, $n = 5$
and the list of $n_i$ is $(3,1,1)$.
This case does not provide smooth 
varieties.
\end{case}

\begin{proof}
Each of $\tp$ and $\tm$ contains at least
two weights.
We may assume $w_{01},w_{02}\in\tp$ and 
$w_{03},w_{11},w_{21}\in\tm$.
Then $\gamma_{01,03},\gamma_{02,03}\in\rlv(X)$
holds and Lemma~\ref{lem:sampleFF}~(iii) 
yields $l_{01}=l_{02}=1$.
By Remark~\ref{rem:Q} we can assume 
$w_{03}=(1,0)$ and $w_{01}^2=w_{02}^2=1$.
This implies $\mu^2=2$ and, as a consequence,
$l_{11}=l_{21}=2$.
By~\cite[Thm.~1.1]{HaHe:2013}, we have 
torsion in $\Cl(X)$; a contradiction 
to Corollary~\ref{cor:clfree}.
\end{proof}

\begin{case}
\label{case:d}
We have $r=2$, $m\ge 0$, $n = 6$
and the list of $n_i$ is $(2,2,2)$.
Suitable admissible operations lead
to one of the following configurations 
for the weights $w_{ij}$:
$$
\begin{array}{ccc}
&&
\\
    \begin{tikzpicture}[scale=0.6]
    \path[fill=gray!60!] (0,0)--(3.5,2.9)--(1.3,3.4)--(0,0);
    \path[fill, color=black] (1.5,2) circle (0.0ex)  node[]{\small{$\tx$}};
    \path[fill, color=black] (-0.25,2.15) circle (0.0ex)  node[]{\tiny{$w_{01} \ w_{11}$}};
    \path[fill, color=black] (-0.25,1.8) circle (0.0ex)  node[]{\tiny{$w_{21}$}};
    \path[fill, color=black] (-0.25,3) circle (0.0ex)  node[]{\small{$\tp$}};
    \draw (0,0)--(1.3,3.4);
    \draw (0,0) --(-2,3.4);
    \path[fill, color=black] (2.1,1.15) circle (0.0ex)  node[]{\tiny{$w_{02}$}};
    \path[fill, color=black] (2.1,0.7) circle (0.0ex)  node[]{\tiny{$w_{12} \ w_{22}$}};
    \path[fill, color=black] (3.5,1.7) circle (0.0ex)  node[]{\small{$\tm$}};
    \draw (0,0)  -- (3.5,2.9);
    \draw (0,0)  -- (4.5,0.7);
    \path[fill, color=black] (1,-.6) circle (0.0ex)  node[]{\small{{\rm (i)}}};
  \end{tikzpicture}
& \qquad \qquad &
  \begin{tikzpicture}[scale=0.6]
    \path[fill=gray!60!] (0,0)--(3.5,2.9)--(1.3,3.4)--(0,0);
    \path[fill, color=black] (1.5,2) circle (0.0ex)  node[]{\small{$\tx$}};
    \path[fill, color=black] (-0.25,2.35) circle (0.0ex)  node[]{\tiny{$w_{01} \ w_{02}$}};
    \path[fill, color=black] (-0.25,2) circle (0.0ex)  node[]{\tiny{$w_{11} \ w_{21}$}};
    \path[fill, color=black] (-0.25,3.1) circle (0.0ex)  node[]{\small{$\tp$}};
    \draw (0,0)--(1.3,3.4);
    \draw (0,0) --(-2,3.4);
    \path[fill, color=black] (2.2,1.05) circle (0.0ex)  node[]{\tiny{$w_{12}$}};
    \path[fill, color=black] (1.7,0.7) circle (0.0ex)  node[]{\tiny{$w_{22}$}};
    \path[fill, color=black] (3.5,1.7) circle (0.0ex)  node[]{\small{$\tm$}}; 
    \draw (0,0)  -- (3.5,2.9);
    \draw (0,0)  -- (4.5,0.7);
    \path[fill, color=black] (1,-.6) circle (0.0ex)  node[]{\small{{\rm (ii)}}};
  \end{tikzpicture}
\\[1ex]
  \begin{tikzpicture}[scale=0.6]
    \path[fill=gray!60!] (0,0)--(3.5,2.9)--(1.3,3.4)--(0,0);
    \path[fill, color=black] (1.5,2) circle (0.0ex)  node[]{\small{$\tx$}};
    \path[fill, color=black] (-0.25,2.5) circle (0.0ex)  node[]{\tiny{$w_{01} \ w_{02}$}};
    \path[fill, color=black] (-0.25,2.15) circle (0.0ex)  node[]{\tiny{$w_{11} \ w_{12}$}};
    \path[fill, color=black] (-0.25,1.8) circle (0.0ex)  node[]{\tiny{$w_{21}$}};
    \path[fill, color=black] (-0.25,3.25) circle (0.0ex)  node[]{\small{$\tp$}}; 
    \draw (0,0)--(1.3,3.4);
    \draw (0,0) --(-2,3.4);
    \path[fill, color=black] (1.7,0.7) circle (0.0ex)  node[]{\tiny{$w_{22}$}};
    \draw (0,0)  -- (3.5,2.9);
    \draw (0,0)  -- (4.5,0.7);
    \path[fill, color=black] (3.5,1.7) circle (0.0ex)  node[]{\small{$\tm$}}; 
    \path[fill, color=black] (1,-.6) circle (0.0ex)  node[]{\small{{\rm (iii)}}};
  \end{tikzpicture}
& \qquad \qquad &
   \begin{tikzpicture}[scale=0.6]
    \path[fill=gray!60!] (0,0)--(3.5,2.9)--(1.3,3.4)--(0,0);
    \path[fill, color=black] (1.5,2) circle (0.0ex)  node[]{\small{$\tx$}};
    \path[fill, color=black] (-0.25,2.7) circle (0.0ex)  node[]{\tiny{$w_{01} \ w_{02}$}};
    \path[fill, color=black] (-0.25,2.35) circle (0.0ex)  node[]{\tiny{$w_{11} \ w_{12}$}};
    \path[fill, color=black] (-0.25,2) circle (0.0ex)  node[]{\tiny{$w_{21} \ w_{22}$}};
    \path[fill, color=black] (-0.25,3.25) circle (0.0ex)  node[]{\small{$\tp$}};
    \draw (0,0)--(1.3,3.4);
    \draw (0,0) --(-2,3.4);
    \path[fill, color=black] (3.5,1.7) circle (0.0ex)  node[]{\small{$\tm$}}; 
    \draw (0,0)  -- (3.5,2.9);
    \draw (0,0)  -- (4.5,0.7);
    \path[fill, color=black] (1,-.6) circle (0.0ex)  node[]{\small{{\rm (iv)}}};
  \end{tikzpicture}
\end{array}
$$
Configuration~(i) amounts to No.~4
in Theorems~\ref{thm:main1},~\ref{thm:main2} and~\ref{thm:main3},
configuration~(ii) to No.~5,
configuration~(iii) to Nos.~6 and~7,
and configuration~(iv) to Nos.~8 and~9.
\end{case}

\begin{proof}[Proof for configuration~(i)]
We have $w_{01},w_{11},w_{21}\in\tp$ and $w_{02},w_{12},w_{22}\in\tm$.
We may assume $w_k \in \tp$ for all $k = 1, \ldots, m$.
If $m>0$, we have $\gamma_{i2,1} \in \rlv(X)$ and
Lemma~\ref{lem:sampleFF}~(ii) gives 
$l_{i1} = 1$ for $i=0,1,2$.
If $m=0$, we use $\gamma_{i_11,i_22} \in \rlv(X)$
and Lemma~\ref{lem:sampleFF}~(iv) to obtain 
$l_{i_12}=1$ or $l_{i_21}=1$ for all $i_1\neq i_2$.
Thus, for $m=0$, we may assume $l_{01}=l_{11}=1$ 
and are left with $l_{21}=1$ or $l_{22}=1$.

We treat the case $m \ge 0$ and $l_{01}=l_{11}=l_{21}=1$.
Here we may assume $w_{11},w_{21},w_{22} \in \cone(w_{01},w_{12})$.
Applying Remark~\ref{rem:Q} firstly to $\gamma_{01,12}$ and 
then to $\gamma_{01,22}$, $\gamma_{12,21}$ and all 
$\gamma_{12,k}$ gives
$$
Q 
\ = \ 
\left[ 
\begin{array}{cc|cc|cc|ccc}
0 & w_{02}^1 & w_{11}^1 & 1 & w_{21}^1 & 1 & w_1^1 & \ldots & w_m^1
\\
1 & w_{02}^2 & w_{11}^2 & 0 & 1 & w_{22}^2 & 1 & \ldots  & 1 
\end{array}
\right].
$$
Using $w_{11},w_{21},w_{22} \in \cone(w_{01},w_{12})$ and the
fact that the determinants of $(w_{02},w_{01})$,
$(w_{12},w_{11})$ and $(w_{22},w_{21})$ 
are positive, we obtain
$$ 
w_{11}^1,\,  w_{21}^1,\,  w_{22}^2 \ \ge \ 0,
\qquad
w_{02}^1,\, w_{11}^2 \ > \ 0,
\qquad
1 \ > \ w_{22}^2 w_{21}^1.
$$
The degree $\mu$ of the relation satisfies
$$
0 
\ < \ 
\mu^1  
\ =  \ 
l_{02}w_{02}^1 
\ = \ 
w_{11}^1 + l_{12} 
\ = \ 
w_{21}^1+ l_{22},
$$
$$
0 
\ < \ 
\mu^2  
\ =  \ 
1+l_{02}w_{02}^2 
\ = \ 
w_{11}^2 
\ = \ 
1+l_{22}w_{22}^2.
$$
In particular, $w_{02}^2 \ge 0$ holds and thus
all components of the $w_{ij}$ are non-negative.
With 
$\gamma_{02,11},\gamma_{02,21}, \in \rlv(X)$ 
and Remark~\ref{rem:Q},
we obtain
$$
w_{02}^1w_{11}^2  \ =  \ 1 + w_{02}^2w_{11}^1,
\qquad\qquad
w_{02}^1-1  \ = \ w_{02}^2w_{21}^1.
$$

We show $w_{22}^2 = 0$. Otherwise, because of 
$1 > w_{22}^2 w_{21}^1$, we have $w_{21}^1=0$.
This implies $w_{02}^1=1$ and thus
$$
w_{11}^2  
\ =  \
1 + w_{02}^2w_{11}^1
\ = \
1+l_{02}w_{02}^2.
$$
This gives $w_{02}^2=0$ or $w_{11}^1=l_{02}$.
The first is impossible because of 
$l_{02}w_{02}^2 = l_{22}w_{22}^2$
and the second because of  
$l_{02} = l_{02}w_{02}^1 = w_{11}^1 + l_{12}$.

Knowing $w_{22}^2 = 0$, we directly conclude
$w_{11}^2 = 1$ and $w_{02}^2 = 0$ from $\mu^2  =  1$.
This gives $w_{02}^1=1$.
With 
$a := w_{11}^1 \in \ZZ_{\ge 0}$, 
$b := w_{21}^1 \in \ZZ_{\ge 0}$ and 
$c_k := w_{k}^1 \in \ZZ$ 
we are in the situation
$$
g_0 
\ = \ 
T_{01}T_{02}^{l_{02}}+T_{11}T_{12}^{l_{12}}+T_{21}T_{22}^{l_{22}},
\qquad
Q \ = \ \left[ 
\begin{array}{cc|cc|cc|ccc}
0 & 1 & a & 1 & b & 1 & c_1 & \dots & c_m 
\\ 
1 & 0 & 1 & 0 & 1 & 0 & 1 & \dots & 1
\end{array}
\right],
$$
where we may assume $0 \le a \le b$ and 
$c_1 \le \ldots \le c_m$.
Observe $l_{02}=a+l_{12}=b+l_{22}$.
The anticanonical class and the semiample 
cone of $X$ are given by 
\begin{eqnarray*}
-\mathcal{K}_X
& = &
(3 + b + c_1 + \ldots + c_m - l_{12}, \, 2 + m),
\\
\SAmple(X) 
& = & 
\cone((1,0), (d,1)),
\end{eqnarray*} 
where $d \sei \max(b, c_m)$. 
Consequently, $X$ is a Fano variety if and only if 
the following inequality holds
$$
3 + b + c_1 + \ldots + c_m - l_{12}
\ > \ 
(2+m)d.
$$
A necessary condition for this is
$0 \le d \le 1$
with $l_{12} = 1$ if $d=1$ and 
$l_{12} \le 2$ if $d=0$
The tuples $(a,b,d,l_{02},l_{12},l_{22})$ 
fulfilling that condition are
$$
(0,0,0,2,2,2), \qquad
(0,0,0,1,1,1), \qquad
(1,1,1,2,1,1).
$$
Each of these three tuples leads indeed
to a Fano variety $X$; the respectively
possible choices of the $c_k$ lead to 
Nos.~4.A, 4.B and~4.C of Theorem~\ref{thm:main2}
and are as follows:
$$
c_1 = \ldots =c_m=0, \qquad
-1 \le c_1 \le 0 = c_2 = \ldots = c_m, \qquad
c_1 = \ldots =c_m=1.
$$
Moreover $X$ is
a truly almost Fano variety if and only if the following 
equality holds
$$
3 + b + c_1 + \ldots + c_m - l_{12}
\ = \ 
(2+m)d.
$$
This implies $0 \le d \le 2$ and the only possible
parameters fulfilling that condition are
listed as Nos.~4.A to~4.F
in the table of Theorem~\ref{thm:main3}.

We turn to the case $m = 0$,
$l_{01}=l_{11}=1$ and $l_{21} \ge 2$.
Lemma~\ref{lem:sampleFF}~(iv) applied to
$\gamma_{01,22}, \gamma_{11,22} \in \rlv(X)$
gives $l_{02}=l_{12}=1$.
If $l_{22}=1$, then suitable admissible 
operations bring us to the previous case.
So, let $l_{22} \ge 2$.
We may assume $w_{11} \in \cone(w_{01},w_{12})$.
We apply Remark~\ref{rem:Q} firstly to~$\gamma_{01,12}$, 
then to $\gamma_{01,22}$, $\gamma_{12,21}$
and arrive at
$$
g_0 
\ = \ 
T_{01}T_{02}+T_{11}T_{12}+T_{21}^{l_{21}}T_{22}^{l_{22}}, 
\quad
Q 
\ = \ 
\left[ 
\begin{array}{cc|cc|cc}
0 & w_{02}^1 & w_{11}^1 & 1 & w_{21}^1 & 1
\\
1 & w_{02}^2 & w_{11}^2 & 0 & 1 & w_{22}^2 
\end{array}
\right],
$$
where $w_{11}^1 \ge 0$ and 
$w_{11}^2 = \det(w_{12},w_{11}) > 0$.
We have  
$\mu = w_{02}+w_{01} = w_{11}+w_{12}$
and thus 
$w_{02} = w_{11}+w_{12}-w_{01}$.
Because of $\gamma_{02,11} \in \rlv(X)$,
we obtain
$$ 
1 
\ = \ 
\det(w_{02},w_{11})
\ = \ 
\det(w_{12}-w_{01},w_{11})
\ = \ 
w_{11}^1+w_{11}^2.
$$
We conclude $w_{11} = (0,1)$
and $\mu = (1,1)$.
Using 
$\mu = l_{21}w_{21}+l_{22}w_{22}$
and 
$l_{21},l_{22} \ge 2$
we see
$w_{21}^1, w_{22}^2 <0$.
On the other hand,
$0 < \det(w_{22},w_{21}) = 1 - w_{21}^1w_{22}^2$,
a contradiction.
Thus $l_{22} \ge 2$ does not occur.
\end{proof}

\begin{proof}[Proof for configuration~(ii)]
We have $w_{01},w_{02},w_{11},w_{21}\in\tp$ and $w_{12},w_{22}\in\tm$.
We may assume that $w_{02},w_{12} \in \cone(w_{01},w_{22})$ holds.
Applying Remark~\ref{rem:Q} first to 
$\gamma_{01,22} \in \rlv(X)$ and then 
to $\gamma_{01,12}, \gamma_{02,22}, \gamma_{11,22} \in \rlv(X)$
we obtain
$$
Q 
\ = \ 
\left[ 
\begin{array}{cc|cc|cc|ccc}
0 & w_{02}^1 & w_{11}^1 & 1  & w_{21}^1 & 1 & w_1^1 & \dots & w_m^1
\\
1 & 1 & 1 & w_{12}^2 & w_{21}^2 & 0 & w_1^2 & \dots  & w_m^2 
\end{array}
\right],
$$
where we have $w_{02}^1, w_{12}^2 \ge 0$ due to 
$w_{02},w_{12} \in \cone(w_{01},w_{22})$.
Moreover, $w_{21}^2>0$ holds, as we infer 
from the conditions 
$$
0 
\ \le \ 
\mu^1  
\ =  \ 
l_{02}w_{02}^1 
\ = \ 
l_{11}w_{11}^1 + l_{12} 
\ = \ 
l_{21}w_{21}^1+ l_{22},
$$
$$
0 
\ < \ 
\mu^2  
\ =  \ 
l_{01}+l_{02} 
\ = \ 
l_{11}+l_{12}w_{12}^2 
\ = \ 
l_{21}w_{21}^2.
$$

We show $l_{11} \ge 2$. Otherwise, the above 
conditions give $l_{12}w_{12}^2 > 0$ and thus
$w_{12}^2 > 0$. 
For $\gamma_{02,12} \in \rlv(X)$, Remark~\ref{rem:Q}
gives $\det(w_{12},w_{02}) = 1$ which means 
$w_{12}^2w_{02}^1=0$ and thus $w_{02}^1 = 0$.
This implies $l_{21}w_{21}^1+ l_{22} = 0$ 
and thus $w_{21}^1 < 0$; a contradiction to 
$1= \det(w_{12},w_{21}) = w_{21}^2 - w_{12}^2w_{21}^1$
which in turn holds due to $\gamma_{12,21} \in \rlv(X)$
and Remark~\ref{rem:Q}.

Lemma~\ref{lem:sampleFF}~(iv) 
applied to $\gamma_{02,12}, \gamma_{01,12}, \gamma_{21,12} \in \rlv(X)$ 
shows $l_{01}=l_{02}=l_{22}=1$. 
Putting together $\mu^2=2= l_{11}+l_{12}w_{12}^2$ and 
$l_{11} \neq 1$, we conclude 
$l_{11}=2$ and $w_{12}^2=0$. 
With $\gamma_{12,21} \in \rlv(X)$ and  
Remark~\ref{rem:Q} we obtain $w_{21}^2=1$ 
and hence $l_{21}=\mu^2=2$.
From
$$
0
\ \le \ 
\mu^1
\ = \ 
w_{02}^1
\ = \ 
2w_{11}^1+1
\ = \ 
2w_{21}^1+1
$$ 
we conclude $w_{11}^1 =w_{21}^1 \geq 0$ 
and thus $w_{02}^1 > 0$. 
Lemma~\ref{lem:sampleFF}~(ii) implies 
that possible weights of type $w_k$ lie in $\tm$.
Thus Remark~\ref{rem:Q} and $\gamma_{01,k}$ 
imply $w_{k}^1=1$ for all $k$. 
Moreover, since $\gamma_{02,k} \in \rlv(X)$, the latter implies $w_k^2=0$.
All in all, we arrive at
$$
g_0 
\ =\ 
T_{01}T_{02}+T_{11}^2T_{12}+T_{21}^2T_{22},
\quad
Q \ = \ \left[ 
\begin{array}{cc|cc|cc|ccc}
0 & 2a+1 & a & 1 & a & 1 & 1 & \ldots & 1 
\\ 
1 & 1 & 1 & 0 & 1 & 0 & 0 & \ldots & 0
\end{array}
\right],
$$
where $a \in \ZZ_{\ge 0}$.
The anticanonical class is $-\mathcal{K}_X=(2a+2+m,2)$
and the semiample cone is $\SAmple(X) =\cone((1,0), (2a+1,1))$. 
Hence $X$ is an almost Fano variety if and only if $m \ge 2a$ holds
and $X$ is a Fano variety if and only if $m > 2a$ holds.
\end{proof}

\begin{proof}[Proof for configuration~(iii)]
We have $w_{01},w_{02},w_{11},w_{12},w_{21}\in\tp$ 
and $w_{22}\in \tm$.
As there must be another weight in $\tm$,
we obtain $m > 0$.  
Lemma~\ref{lem:tau}~(v) yields
$w_1, \ldots, w_m \in \tm$.
We may assume 
$w_{02}, w_{11}, w_{12}, w_k  \in \cone(w_{01},w_1)$,
where $k = 2, \ldots, m$.
Applying Remark~\ref{rem:Q} firstly to 
$\gamma_{01,1} \in \rlv(X)$ and then 
to the remaining faces 
$\gamma_{01,22}, \gamma_{01,k}, \gamma_{ij,1}$ 
from $\rlv(X)$ 
leads to the degree matrix
$$
Q 
\ = \ 
\left[ 
\begin{array}{cc|cc|cc|cccc}
0 & w_{02}^1 & w_{11}^1 & w_{12}^1 & w_{21}^1 & 1 & 1 & 1 & \ldots & 1 
\\
1 & 1 & 1 & 1 & 1 & w_{22}^2 & 0 & w_2^2 & \ldots & w_m^2
\end{array}
\right]
$$
with at most $w_{21}^1,w_{22}^2$ negative.
We infer $l_{01}=l_{02}=l_{11}=l_{12}=l_{22}=1$
from Lemma~\ref{lem:sampleFF}~(ii).
For $\gamma_{02,22},\gamma_{11,22},\gamma_{12,22} \in \rlv(X)$
Remark~\ref{rem:Q} tells us
$$
w_{22}^2 \ = \ 0 
\qquad \text{or} \qquad 
w_{02}^1 \ = \ w_{11}^1 \ = \ w_{12}^1 \ = \ 0.
$$

We treat the case $w_{22}^2=0$. 
Here $l_{21} = \mu^2 =2$ holds.
Thus $\mu^1=w_{02}^1 = 2w_{21}^1+1$
holds.
Because of $w_{02}^1 \ge 0$, we conclude 
$w_{02}^1 > 0$ and $w_{21}^1 \ge 0$.
Remark \ref{rem:Q} applied 
to $\gamma_{02,k} \in \rlv(X)$
gives $w_k^2=0$ for all $k=2,\dots,m$. 
We arrive at 
$$
g_0 
\ = \ 
T_{01}T_{02}+T_{11}T_{12}+T_{21}^2T_{22},
\quad
Q 
\ = \ 
\left[ 
\begin{array}{cc|cc|cc|ccc}
0 & 2c+1 & a & b & c & 1 & 1 &  \ldots & 1 
\\ 
1 & 1 & 1 & 1 & 1 & 0 & 0 & \ldots & 0
\end{array}
\right],
$$
where $a,b,c \in \ZZ_{\geq 0}$ and $a+b=2c+1$.
Furthermore, the anticanonical class is 
$-\mathcal{K}_X =(3c+2+m,3)$
and we have $\SAmple(X)=\cone((1,0), (2c+1,1))$.
In particular, $X$ is an almost Fano variety if and only 
if $3c+1 \le m$ holds
and a Fano variety if and only if
the corresponding strict inequality holds.

Now we consider the case $w_{02}^1=w_{11}^1=w_{12}^1=0$.
We have $\mu^1 =0$, which implies $l_{21}=1$, $w_{21}^1=-1$.
Consequently, $\mu^2 =2$ gives $w_{22}^2=1$.
Since $\gamma_{21,k}\in \rlv(X)$ for $2 \leq k \leq m$, 
we conclude~$w_k^2=0$ for all $k$.
Therefore we obtain
$$
g_0 
\ = \ 
T_{01}T_{02}+T_{11}T_{12}+T_{21}T_{22},
\quad
Q 
\ = \ 
\left[ 
\begin{array}{cc|cc|cc|ccc}
0 & 0 & 0 & 0 & -1 & 1 & 1 &  \ldots & 1 
\\ 
1 & 1 & 1 & 1 & 1 & 1 & 0 & \ldots & 0
\end{array}
\right].
$$
Finally, we have $-\mathcal{K}_X =(m,4)$
and $\SAmple(X)=\cone((1,1), (0,1))$.
Thus, $X$ is a Fano variety if and only if 
$m<4$ holds.
Moreover, $X$ is an almost Fano variety if and only if 
$m\le 4$ holds.
\end{proof}

\begin{proof}[Proof for configuration~(iv)]
All $w_{ij}$ lie in $\tp$. 
Then we have $m \ge 2$ and one and hence all 
$w_k$ in lie in $\tm$, see Lemma~\ref{lem:tau}~(v).
Applying Lemma~\ref{lem:sampleFF}~(ii) 
to $\gamma_{ij,1} \in \rlv(X)$,
we conclude $l_{ij} = 1$ for all $i,j$.
Thus we have the relation 
$$
g_0
\ = \ 
T_{01}T_{02}+T_{11}T_{12}+T_{21}T_{22}.
$$
We may assume that $\cone(w_{01},w_1)$ 
contains all $w_{ij}, w_k$.
Remark~\ref{rem:Q} applied to 
$\gamma_{01,1} \in \rlv(X)$ leads to 
$w_1  =  (1,0)$ and $w_{01}  =  (0,1)$.
All other weights lie in the positive orthant.
For $\gamma_{ij,1}, \gamma_{01,k} \in \rlv(X)$ 
Remark~\ref{rem:Q} shows 
$w_{ij}^2=w_k^1=1$ for all $i,j,k$.
Consider the case that all $w_k^2$ vanish.
Then the degree matrix is of the form
$$
Q 
\ = \ 
\left[ 
\begin{array}{cc|cc|cc|ccc}
0 & a_2 & a_3 & a_4 & a_5 & a_6 & 1 & \ldots & 1 
\\ 
1 & 1 & 1 & 1 & 1 & 1 & 0 & \ldots & 0
\end{array}
\right],
$$
where $a_i \in \ZZ_{\ge 0}$ and $a_2=a_3+a_4=a_5+a_6$. 
We have $-\mathcal{K}_X = (2a_2+m,4)$
and $\SAmple(X)= \cone((1,0),(a_2,1))$.
Hence~$X$ is a Fano variety if and only if 
$2a_2 < m$ holds and an almost Fano variety
if and only if $2a_2 \le m$ holds.

Finally, let $w_k^2>0$ for some $k$.
Note that we may assume $0\le w_2^2 \le\ldots\le w_m^2$; 
in particular $w_m^2>0$.
Since $\gamma_{ij,m} \in  \rlv(X)$ for all 
$i,j$, Remark~\ref{rem:Q} yields
$w_{ij}^1 = 0$ for all $i,j$.
Thus we obtain the degree matrix
$$
Q 
\ = \ 
\left[ 
\begin{array}{cc|cc|cc|cccc}
0 & 0 & 0 & 0 & 0 & 0 & 1 & 1 & \ldots & 1 
\\ 
1 & 1 & 1 & 1 & 1 & 1 & 0 & a_2 & \ldots & a_m
\end{array}
\right],
$$
where $0\le a_2 \le\ldots\le a_m$ 
and $a_m>0$.
The anticanonical class and the semiample cone 
are given as 
$$
-\mathcal{K}_X \ = \ (m,\, 4+a_2+ \ldots + a_m),
\qquad 
\SAmple(X) \ = \ \cone((0,1),(1,a_m)).
$$
In particular, $X$ is a Fano variety if and only if 
$4+a_2+ \ldots + a_m > ma_m$ holds.
Note that for the latter $a_m \le 3$ is necessary.
Moreover, $X$ is a truly almost Fano variety
if and only if the equality $4+a_2+ \ldots + a_m = ma_m$ holds.
\end{proof}

\begin{case}
We have $r=2$, $m\ge 0$, $n = 5$
and the list of $n_i$ is $(2,2,1)$.
This leads to Nos.~10, 11 and~12 in 
Theorems~\ref{thm:main1},~\ref{thm:main2} and~\ref{thm:main3}.
\end{case}

\begin{proof}
We divide this case into the following three configurations,
according to the way some weights lie with respect to $\tx$.

\vspace{0.3cm}
   \begin{tikzpicture}[scale=0.6]
    \path[fill=gray!60!] (0,0)--(3.5,2.9)--(1.3,3.4)--(0,0);
    \path[fill, color=black] (1.5,2) circle (0.0ex)  node[]{\small{$\tx$}};
    \path[fill, color=black] (-0.4,1.9) circle (0.0ex)  node[]{\tiny{$w_{02}$}};
    \path[fill, color=black] (-0.15,1.45) circle (0.0ex)  node[]{\tiny{$w_{12}$}};
    \path[fill, color=black] (-0.25,2.8) circle (0.0ex)  node[]{\small{$\tp$}};
    \draw (0,0)--(1.3,3.4);
    \draw (0,0) --(-2,3.4);
    \path[fill, color=black] (2.2,1.05) circle (0.0ex)  node[]{\tiny{$w_{01}$}};
    \path[fill, color=black] (1.7,0.7) circle (0.0ex)  node[]{\tiny{$w_{11}$}};
    \path[fill, color=black] (3.5,1.7) circle (0.0ex)  node[]{\small{$\tm$}};
    \draw (0,0)  -- (3.5,2.9);
    \draw (0,0)  -- (4.5,0.7);
    \path[fill, color=black] (1,-1) circle (0.0ex)  node[]{\small{(i)}};
  \end{tikzpicture}
\
   \begin{tikzpicture}[scale=0.6]
    \path[fill=gray!60!] (0,0)--(3.5,2.9)--(1.3,3.4)--(0,0);
    \path[fill, color=black] (1.5,2) circle (0.0ex)  node[]{\small{$\tx$}};
    \path[fill, color=black] (-0.4,1.9) circle (0.0ex)  node[]{\tiny{$w_{02}$}};
    \path[fill, color=black] (-0.15,1.45) circle (0.0ex)  node[]{\tiny{$w_{1}$}};
    \path[fill, color=black] (-0.25,2.8) circle (0.0ex)  node[]{\small{$\tp$}};
    \draw (0,0)--(1.3,3.4);
    \draw (0,0) --(-2,3.4);
    \path[fill, color=black] (2.1,1.15) circle (0.0ex)  node[]{\tiny{$w_{01}$}};
    \path[fill, color=black] (2.1,0.7) circle (0.0ex)  node[]{\tiny{$w_{11} \ w_{12}$}};
    \path[fill, color=black] (3.5,1.7) circle (0.0ex)  node[]{\small{$\tm$}};
    \draw (0,0)  -- (3.5,2.9);
    \draw (0,0)  -- (4.5,0.7);
    \path[fill, color=black] (1,-1) circle (0.0ex) node[]{\small{(ii)}};
  \end{tikzpicture}
\
     \begin{tikzpicture}[scale=0.6]
    \path[fill=gray!60!] (0,0)--(3.5,2.9)--(1.3,3.4)--(0,0);
    \path[fill, color=black] (1.5,2) circle (0.0ex)  node[]{\small{$\tx$}};
    \path[fill, color=black] (-0.4,1.9) circle (0.0ex)  node[]{\tiny{$w_{1}$}};
    \path[fill, color=black] (-0.15,1.45) circle (0.0ex)  node[]{\tiny{$w_{2}$}};
    \path[fill, color=black] (-0.25,2.8) circle (0.0ex)  node[]{\small{$\tp$}};
    \draw (0,0)--(1.3,3.4);
    \draw (0,0) --(-2,3.4);
    \path[fill, color=black] (2.6,1.15) circle (0.0ex)  node[]{\tiny{$w_{01} \ w_{02}$}};
    \path[fill, color=black] (2.1,0.7) circle (0.0ex)  node[]{\tiny{$w_{11} \ w_{12}$}};
    \path[fill, color=black] (3.7,1.75) circle (0.0ex)  node[]{\small{$\tm$}};
    \draw (0,0)  -- (3.5,2.9);
    \draw (0,0)  -- (4.5,0.7);
    \path[fill, color=black] (1,-1) circle (0.0ex) node[]{\small{(iii)}};
  \end{tikzpicture}
We show that configuration~(i) does not provide 
any smooth variety,
(ii) delivers No.~10 of Theorem~\ref{thm:main1}
and~(iii) delivers Nos.~11 and 12.

In configuration~(i) we have $w_{01},w_{11}\in\tm$ 
and $w_{02},w_{12}\in\tp$.
We may assume $w_{11} \in \cone(w_{01}, w_{12})$.
Remark~\ref{rem:Q} applied to $\gamma_{01,12}\in\rlv(X)$
leads to $w_{01}=(1,0)$ and $w_{12}=(0,1)$.
Observe $w_{11}^1, w_{11}^2 \ge 0$.
Due to 
$\det(w_{11},w_{12}) > 0$, we even have 
$w_{11}^1 > 0$
and 
$\det(w_{01},w_{02}) > 0$
gives $w_{02}^2 > 0$.
Since $T_0^{l_0}$ and $T_1^{l_1}$ share the same degree,
we have
$$ 
l_{01}w_{01} + l_{02}w_{02} \ = \ l_{11}w_{11} + l_{12}w_{12}.
$$
Lemma~\ref{lem:sampleFF}~(iv) says $l_{02}=1$ or $l_{11}=1$,
which allows us to resolve for $w_{02}$ or for $w_{11}$
in the above equation.
Using $\gamma_{02,11}\in\rlv(X)$, we obtain
\begin{eqnarray*}
l_{02} = 1
& \implies & 
1 
= \det(w_{11},w_{02}) 
= \det(w_{11},l_{12}w_{12}-l_{01}w_{01}) 
= l_{12}w_{11}^1+l_{01}w_{11}^2,
\\
l_{11} = 1
& \implies & 
1 
=
\det(w_{11},w_{02}) 
=
\det(l_{01}w_{01}-l_{12}w_{12},w_{02}) 
= 
l_{01}w_{02}^2+l_{12}w_{02}^1.
\end{eqnarray*}
We show $l_{02} > 1$. Otherwise, $l_{02} = 1$ holds.
The above consideration shows $w_{11}^2 = 0$ and 
$l_{12}= w_{11}^1 = 1$.
Thus, $l_{21}w_{21}^2 = l_{12} =1$ holds and we obtain 
$l_{21} = 1$; a contradiction to $P$ being 
irredundant.
Thus, $l_{02} > 1$ and $l_{11} = 1$ must hold.
Because of $w_{02}^2 > 0$, we must have 
$w_{02}^1 \le 0$. 
With
$$ 
1
\ = \ 
\det(w_{11},w_{02}) 
\ = \
w_{11}^1 w_{02}^2 - w_{11}^2 w_{02}^1 
$$
we see $w_{11}^2 w_{02}^1 = 0$ and $w_{11}^1 = w_{02}^2 = 1$.
But then we arrive at $1 = l_{11}w_{11}^1 = l_{21}w_{21}^1$.
Again this means $l_{21} = 1$; a contradiction to $P$ being 
irredundant.

In configuration~(ii) we have
$w_{01},w_{11},w_{12} \in \tm$ and $w_{02},w_{1} \in \tp$.
In particular $m\ge1$.
Lemma~\ref{lem:tau}~(v) yields $w_2,\ldots,w_m \in \tp$.
Applying Remark~\ref{rem:Q} first to $\gamma_{11,1}\in\rlv(X)$
an then to 
$\gamma_{01,1},\gamma_{12,1},\gamma_{02,11},\gamma_{11,k}\in\rlv(X)$
leads to 
$$
Q 
\ = \ 
\left[ 
\begin{array}{cc|cc|c|cccc}
1 & w_{02}^1 & 1 & 1 & w_{21}^1 & 0 & w_2^1 & \dots & w_m^1 
\\ 
w_{01}^2 & 1 & 0 & w_{12}^2 & w_{21}^2 & 1 & 1 & \dots & 1
\end{array}
\right].
$$
Applying Lemma~\ref{lem:sampleFF}~(ii) 
to $\gamma_{01,1},\gamma_{12,1},\gamma_{11,1}\in\rlv(X)$
we obtain $l_{02}=l_{11}=l_{12}=1$.
For the degree $\mu$ of the relation $g_0$ we note
$$
\mu^1 
\ = \ 
l_{01}+w_{02}^1 
\ =  \ 
2 
\ = \ 
l_{21} w_{21}^1,
\qquad
\qquad
\mu^2 
\ = \
l_{01}w_{01}^2+1 
\ = \  
w_{12}^2 
\ = \ 
l_{21}w_{21}^2.
$$
From $\mu^1=2$ we infer $l_{21}=2$ and $w_{21}^1=1$.
Consequently, $\mu^2$ is even and
both $l_{01}, w_{01}^2$ are odd. 
Using again $\mu^1=2$ gives $w_{02}^1 \ne 0$.
For $\gamma_{02,12}\in\rlv(X)$ 
Remark~\ref{rem:Q} yields $\det(w_{12},w_{02})=1$ 
which means $w_{02}^1w_{12}^2=0$.
We conclude $w_{12}^2=0=\mu^2$.
This implies $w_{21}^2=0$, $w_{01}^2=-1$,
$l_{01}=1$ and $w_{02}^1=1$.
We obtain
$$
g_0 
\ = \ 
T_{01}T_{02}+T_{11}T_{12}+T_{21}^2,
\qquad
Q 
\ = \ 
\left[ 
\begin{array}{cc|cc|c|ccc}
1 & 1 & 1 & 1 & 1 & 0 & \dots & 0
\\ 
-1 & 1 & 0 & 0 & 0 & 1 & \dots & 1
\end{array}
\right],
$$
where $w_2^1= \ldots =w_m^1=0$ follows from 
Remark~\ref{rem:Q} applied 
to $\gamma_{01,k}\in\rlv(X)$.
The semiample cone is given as $\SAmple(X)= \cone((1,0),(1,1))$
and the anticanonical class as $-\mathcal{K}_X=(3,m)$.
Therefore $X$ is a Fano variety if and only if $m<3$, i.e $m=1,2$.
Moreover, $X$ is an almost Fano variety if and only if $m\le3$.

In configuration~(iii) we have 
$w_{01},w_{02},w_{11},w_{12}\in\tm$ 
and $w_{1},w_{2}\in\tp$.
In particular $m\ge2$.
Lemma~\ref{lem:tau}~(v) ensures
$w_3, \ldots, w_m \in \tp$. 
We can assume that all $w_{ij}, w_k$ 
lie in $\cone(w_{01},w_{1})$.
Applying Remark~\ref{rem:Q},
firstly to $\gamma_{01,1}$ and then 
to all relevant faces of the types
$\gamma_{ij,1}$ and $\gamma_{01,k}$,
we achieve 
$$
w_{01}=(1,0), \quad
w_{1}=(0,1), \quad
w_{02}^1=w_{11}^1=w_{12}^1=1, \quad
w_2^2= \ldots = w_m^2 =1.
$$
Lemma~\ref{lem:sampleFF}~(ii) 
applied to all $\gamma_{ij,1}$ 
shows $l_{ij}=1$ for all $i,j$.
We conclude $\mu^1=2$ which in 
turn implies $l_{21}=2$ and $w_{21}^1=1$.
In particular, we have the relation
$$
g_0 \ = \ T_{01}T_{02} + T_{11}T_{12} + T_{21}^2.
$$

We treat the case that $w_1^1 = \ldots = w_m^1 =0$ holds.
All columns of the degree matrix lie in $\cone(w_{01},w_{1})$
and thus $Q$ is of the form
$$
Q 
\ = \ 
\left[ 
\begin{array}{cc|cc|c|cccc}
1 & 1 & 1 & 1 & 1 & 0 & 0 & \ldots & 0 
\\
0 & 2c & a & b & c & 1 & 1 & \ldots & 1
\end{array}
\right],
$$
where $a,b,c\in\ZZ_{\ge0}$ and $a+b=2c$.
The anticanonical class is $-\mathcal{K}=(3,m+3c)$
and we have $\SAmple(X)=\cone((0,1),(1,2c))$.
Therefore $X$ is a Fano variety if and only if $m>3c$.
Moreover, $X$ is an almost Fano variety 
if and only if $m\ge3c$.

We treat the case that $w_k^1>0$ holds for some $k$.
Then we obtain $w_{02}^2=0$ by applying Remark~\ref{rem:Q} 
to $\gamma_{02,k}$.
This yields $\mu^2=0$ and thus $w_{ij}^2=0$ for all $i,j$.
Consequently, the degree matrix is given as
$$
Q 
\ = \ 
\left[ 
\begin{array}{cc|cc|c|cccc}
1 & 1 & 1 & 1 & 1 & 0 & w_2^1 & \dots & w_m^1 
\\
0 & 0 & 0 & 0 & 0 & 1 & 1 & \dots & 1
\end{array}
\right],
$$
where we can assume $0 \le w_2^1 \le \ldots \le w_m^1$.
The semiample cone and the anticanonical divisor 
are given as
$$
\SAmple(X)=\cone((1,0),(w_m^1,1)),
\qquad
-\mathcal{K}=(3+w_2^1+\ldots+w_m^1,m).
$$
We see  that $X$ is an almost Fano variety if and only if 
$mw_m^1 \le 3+w_2^1+\ldots+w_m^1$
and that $X$ is a Fano variety if and only if the corresponding
strict inequality holds.
\end{proof}

\begin{case}
We have $r=2$, $m\ge 1$, $n = 4$
and the list of $n_i$ is $(2,1,1)$.
This case does not provide any smooth variety.
\end{case}

\begin{proof}
We can assume $w_{01}\in\tm$ and $w_{1}\in\tp$.
Lemma~\ref{lem:tau}~(v) ensures
$w_2, \ldots, w_m \in \tp$. 
Applying Remark~\ref{rem:Q} 
first to $\gamma_{01,1} \in \rlv(X)$
and then to the remaining $\gamma_{01,k} \in \rlv(X)$, 
we achieve
$$
Q 
\ = \ 
\left[ 
\begin{array}{cc|c|c|cccc}
1 & w_{02}^1 & w_{11}^1 & w_{21}^1 & 0 & w_2^1 & \dots & w_m^1
\\
0 & w_{02}^2 & w_{11}^2 & w_{21}^2 & 1 & 1 & \dots & 1
\end{array}
\right].
$$
Moreover $\gamma_{01,1}\in\rlv(X)$ implies 
$l_{02}=1$ by Lemma~\ref{lem:sampleFF}~(ii).
Recall from Corollary~\ref{cor:clfree} 
that $\Cl(X)$ is torsion-free.
Thus~\cite[Thm.~1.1]{HaHe:2013} implies that
$l_{11}$ and $l_{21}$ are coprime.

Consider the case $w_{02}\in\tm$.
Then $\gamma_{02,1} \in \rlv(X)$ holds,
Lemma~\ref{lem:sampleFF}~(ii) 
yields $l_{01}=1$ and
Remark~\ref{rem:Q} shows $w_{02}^1=1$. 
We conclude $\mu^1 = 2$ and thus 
obtain $l_{11}=l_{21}=2$; a contradiction.

Now let $w_{02}\in\tp$, which 
implies $\gamma_{01,02,11}\in\rlv(X)$.
Since $X$ is locally factorial, 
Remark \ref{rem:Qfact}~(ii) shows that 
$w_{02}^2$ and~$w_{11}^2$ are coprime. 
Now we look at 
$$
\mu^2 
\ = \ 
w_{02}^2 
\ = \ 
l_{11}w_{11}^2 
\ =  \ 
l_{21}w_{21}^2.
$$ 
We infer that~$l_{21}$ divides $w_{02}^2$ and $w_{11}^2$.
This contradicts coprimeness of $w_{02}^2$ and~$w_{11}^2$,
because by irredundancy of $P$ we have 
$l_{21} \ge 2$.
\end{proof}

\begin{case2}
We have $r=3$, $m=0$ and $2 = n_0 = n_1 \ge n_2 \ge n_3 \ge 1$.
This leads to No.~13 in Theorems~\ref{thm:main1} and~\ref{thm:main2}.
\end{case2}

\begin{proof}
We treat the constellations~(a), (b) and~(c) 
at once.
First observe that for every $w_{i_1j_1}$ 
with $n_{i_1}=2$,
there is at least one $w_{i_2j_2}$ 
with~$n_{i_2}=2$ and $i_1\neq i_2$
such that $\tx \subseteq Q(\gamma_{i_1j_1,i_2j_2})^{\circ}$
and thus 
$\gamma_{i_1j_1,i_2j_2} \in \rlv(X)$.
Since $r=3$, we conclude $l_{ij}=1$ for 
all $i$ with $n_i=2$; see Lemma~\ref{lem:sampleFF}~(iv).

We can assume $w_{01},w_{11}\in\tm$ and $w_{02},w_{12}\in\tp$ 
as well as $w_{11} \in \cone(w_{01},w_{12})$.
Applying Remark~\ref{rem:Q} to $\gamma_{01,12},\in\rlv(X)$, 
we obtain $w_{01}=(1,0)$ and $w_{12}=(0,1)$.
Moreover $w_{11}^1,w_{11}^2\ge 0$ holds and,
because of $w_{11} \not\in \tp$, we even have
$w_{11}^1>0$.
For the degree $\mu$ of $g_0$ and $g_1$ we note
$$
\mu^1 \ = \ w_{02}^1+1 \ = \ w_{11}^1,
\qquad
\qquad
\mu^2 \ = \ w_{02}^2 \ = \ w_{11}^2+1.
$$
Thus, we can express $w_{02}$ in terms of $w_{11}$.
Remark~\ref{rem:Q} applied to $\gamma_{02,11} \in \rlv(X)$
gives $1 = \det(w_{11},w_{02}) = w_{11}^1 + w_{11}^2$.
We conclude $w_{11}=(1,0)$ and $w_{02}=(0,1)$. 
In particular, the degree of the relations $g_0$ 
and $g_1$ is $\mu=(1,1)$.

In constellations~(b) and~(c), we have $n_3=1$
and $\mu=(1,1)$. This implies $l_{31}=1$, 
a contradiction to $P$ being irredundant.
Thus, constellations~(b) and~(c) do not occur.

We are left with constellation~(a), 
that means that we have  $n_0= \ldots = n_3=2$.
As seen before, $l_{ij} = 2$ for all $i,j$.
Thus, the relations are 
$$
g_0 
\ = \ 
T_{01}T_{02} + T_{11}T_{12} + T_{21}T_{22}, 
\qquad
g_1 
\ = \ 
\lambda T_{11}T_{12} + T_{21}T_{22} + T_{31}T_{32},
$$
where $\lambda\in\KK^*\setminus\{1\}$.
In this situation, we may assume 
$w_{21}, w_{31} \in \tm$. 
Applying Remark~\ref{rem:Q} to the relevant 
faces $\gamma_{02,21}, \gamma_{02,31}$, 
we conclude $w_{21}^1=w_{31}^1=1$.
Since $\mu^1=1$ and $l_{ij}=1$, 
we obtain $w_{22}^1=w_{32}^1=0$.
Thus, $w_{22}$ and $w_{32}$ lie in $\tp$. 
Again Remark~\ref{rem:Q}, this time  applied to 
$\gamma_{01,22}, \gamma_{01,32} \in \rlv(X)$, 
yields $w_{22}^2=w_{32}^2=1$.
Since $\mu^2=1$ and $l_{ij}=1$, 
we obtain $w_{21}^2=w_{31}^2=0$.
Hence we obtain the degree matrix
$$
Q \ = \ \left[ 
\begin{array}{cc|cc|cc|cc}
1 & 0 & 1 & 0 & 1 & 0 & 1 & 0 
\\
0 & 1 & 0 & 1 & 0 & 1 & 0 & 1
\end{array}
\right].
$$
The semiample cone is $\SAmple(X)=(\QQ_{\ge0})^2$
and the anticanonical divisor is $-\mathcal{K}_X=(2,2)$.
In particular, $X$ is a Fano variety.
\end{proof}

\begin{proof}[Proof of Theorems~\ref{thm:main1}, 
\ref{thm:main2} and~\ref{thm:main3}]
The preceeding analysis of the cases of 
Proposition~\ref{prop:smooth-rho2}
shows that every smooth rational non-toric 
projective variety of Picard number two 
coming with a torus action of complexity one
occurs in Theorem~\ref{thm:main1}
and, among these, the Fano ones in 
Theorem~\ref{thm:main2} 
and the truly almost Fano ones in
Theorem~\ref{thm:main3}.
Comparing the defining data, one directly 
verifies that any two different listed 
varieties are not isomorphic to each other.
Finally, using Remark~\ref{rem:Qfact} 
one explicitly checks that indeed all varieties 
listed in Theorem~\ref{thm:main1} are smooth.
\end{proof}

\section{Duplicating free weights}
\label{section:finite}

As mentioned in the introduction, there are (up to 
isomorphy) just two smooth non-toric projective 
varieties with a torus action of complexity one 
and Picard number one, namely the smooth projective 
quadrics in dimensions three and four.
In Picard number two we obtained examples in every 
dimension and this even holds when we restrict to 
the Fano case. 
Nevertheless, also in Picard number two we will 
observe a certain finiteness feature: 
each Fano variety listed in Theorem~\ref{thm:main2}
arises from a smooth, but not necessarily Fano, variety
of dimension at most seven via an iterated generalized 
cone construction.
In terms of the Cox ring the generalized cone construction
simply means \emph{duplicating a free weight}.

For the precise treatment, the setting of bunched rings 
$(R,\mathfrak{F},\Phi)$ is most appropriate.
Recall from~\cite[Sec.~3.2]{ArDeHaLa} that $R$ is 
a normal factorially 
$K$-graded $\KK$-algebra, $\mathfrak{F}$ a system of 
pairwise non-associated $K$-prime generators for $R$ 
and $\Phi$ a certain collection of polyhedral cones 
in $K_\QQ$ defining an open set 
$\wh{X} \subseteq \ol{X} = \Spec \, R$
with a good quotient $X = \wh{X} \quot H$ 
by the action of the quasitorus $H = \Spec \, \KK[K]$ 
on $\ol{X}$.
Dimension, divisor class group and Cox ring of $X$ are 
given by
$$ 
\dim(X) \ = \ \dim(R) - \dim(K_\QQ),
\qquad
\Cl(X) \ = \ K,
\qquad \
\mathcal{R}(X) \ = \ R.
$$
We call $X=X(R,\mathfrak{F},\Phi)$ the variety associated 
with the bunched ring $(R,\mathfrak{F},\Phi)$.
This construction yields for example all normal complete 
$A_2$-varieties with a finitely generated Cox ring,
e.g.~Mori dream spaces.
Observe that our Construction~\ref{constr:RAPu} presented 
earlier is a special case; it yields precisely the rational 
projective varieties with a torus action of complexity one.
The approach via bunched rings allows in particular 
an algorithmic treatment~\cite{HaKe:2014}.

\begin{construction}
\label{const:duplicate}
Let $R = \KK[T_1,\ldots,T_r] / \langle g_1, \ldots, g_s \rangle$
a $K$-graded algebra presented by $K$-homogeneous
generators $T_i$ and relations $g_j \in \KK[T_1,\ldots,T_{r-1}]$.
By \emph{duplicating the free weight $\deg(T_r)$} we mean passing
from $R$ to the $K$-graded algebra 
$$ 
R'
\ := \
\KK[T_1,\ldots,T_r,T_{r+1}] / \langle g_1, \ldots, g_s \rangle,
\qquad
\deg(T_{r+1}) \ := \ \deg(T_r) \ \in \ K, 
$$
where $g_j \in \KK[T_1,\ldots,T_{r-1}] \subseteq \KK[T_1,\ldots,T_r,T_{r+1}]$.
If in this situation $(R,\mathfrak{F},\Phi)$ is 
a bunched ring with $\mathfrak{F} = (T_1,\ldots,T_r)$,
then $(R',\mathfrak{F}',\Phi)$ is a bunched ring with 
$\mathfrak{F}' = (T_1,\ldots,T_r,T_{r+1})$.
\end{construction}

\begin{proof}
The $\KK$-algebra $R'$ is normal and, 
by~\cite[Thm.~1.4]{Be}, factorially 
$K$-graded.
Obviously, the $K$-grading is almost free
in the sense of~\cite[Def.~3.2.1.1]{ArDeHaLa}.
Moreover, $(R,\mathfrak{F})$ and 
$(R',\mathfrak{F}')$ have the same sets of
generator weights in the common grading 
group $K$ and the collection of 
projected $\mathfrak{F}'$-faces 
equals the collection of 
projected $\mathfrak{F}$-faces.
We conclude that $\Phi$ is a true $\mathfrak{F}'$-bunch
in the sense of~\cite[Def.~3.2.1.1]{ArDeHaLa}
and thus $(R',\mathfrak{F}',\Phi)$ is a bunched 
ring.
\end{proof}

The word ``free'' in  Construction~\ref{const:duplicate}
indicates that the variable $T_r$ does not occur in 
the relations $g_j$.
In the above setting, we say that $R$ is a complete intersection,
for short c.i., if $R$ is of dimension $r-s$. 
Here are the basic features of the procedure.

\begin{proposition}
\label{prop:duplicate}
Let $(R',\mathfrak{F}',\Phi)$ arise from the bunched ring 
$(R,\mathfrak{F},\Phi)$ via Construction~\ref{const:duplicate}.
Set $X':=X(R',\mathfrak{F}',\Phi)$ and 
$X:=X(R,\mathfrak{F},\Phi)$.
\begin{enumerate}
\item
We have $\dim(X') = \dim(X)+1$.
\item
The cones of semiample divisor classes satisfy
$\SAmple(X') = \SAmple(X)$.
\item
The variety $X'$ is smooth if and only if $X$ is smooth.
\item
The ring $R'$ is a c.i. if and only if $R$ is a c.i.. 
\item
If $R$ is a c.i., $\deg(T_r)$ semiample and 
$X$ Fano, then $X'$ is Fano.
\end{enumerate}
\end{proposition}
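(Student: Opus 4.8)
The plan is to treat the five assertions of Proposition~\ref{prop:duplicate} in the order (i), (iv), (ii), (iii), (v), relying throughout on the elementary observation that, since no $g_j$ involves $T_r$ or $T_{r+1}$, we have $R' = R[T_{r+1}]$. Equivalently, setting $\ol{X}'' := \Spec(\KK[T_1,\dots,T_{r-1}]/\langle g_1,\dots,g_s\rangle)$, we get $\ol{X} = \ol{X}'' \times \KK$ with the extra coordinate $T_r$, and $\ol{X}' = \ol{X}'' \times \KK^2$ with extra coordinates $T_r, T_{r+1}$; in particular $\ol{X}'$ carries the involution $\iota$ interchanging the coordinates $T_r$ and $T_{r+1}$, and $\iota$ commutes with the $H$-action and preserves $\wh{X}'$. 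These structures will do most of the work.

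First I would dispose of (i) and (iv) together. From $R' = R[T_{r+1}]$ we get $\dim R' = \dim R + 1$, hence $\dim X' = \dim R' - \dim K_\QQ = \dim X + 1$; and $R'$, being presented by $r+1$ generators and $s$ relations, is a complete intersection iff $\dim R' = (r+1)-s$, equivalently $\dim R = r-s$, i.e.\ iff $R$ is one. For (ii) I would note that $\SAmple$ and $\Ample$ are read off the same combinatorial input in both cases: by Remark~\ref{rem:divcones} (and \cite{ArDeHaLa} in the general bunched-ring setting) one has $\SAmple(X) = \bigcap_{\gamma_0\in\rlv(X)} Q(\gamma_0)$ and $\Ample(X) = \bigcap_{\gamma_0\in\rlv(X)} Q(\gamma_0)^\circ$; since $Q'(e_{r+1}) = Q'(e_r) = Q(e_r)$, the cones $Q'(\gamma_0')$, as $\gamma_0'$ runs over faces of the orthant in $\QQ^{r+1}$, run exactly over the cones $Q(\gamma_0)$, the $\mathfrak{F}'$-faces project precisely onto the $\mathfrak{F}$-faces (proof of Construction~\ref{const:duplicate}), and the bunch $\Phi$ is literally unchanged, so the relevant subcollections agree. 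This yields $\SAmple(X') = \SAmple(X)$ and, by the same argument, $\Ample(X') = \Ample(X)$, which I would record for use in (v).

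For (iii) I would invoke the bunched-ring smoothness criterion (Remark~\ref{rem:Qfact} and \cite{ArDeHaLa}): a point of $X(\gamma_0)$ is smooth iff $Q$ maps $\lin(\gamma_0)$ intersected with the lattice onto $\Cl(X)$ and $\ol{X}(\gamma_0)$ consists of smooth points of $\ol{X}$. Given a relevant face $\gamma_0'$ of $X'$, after applying $\iota$ if necessary I may assume that $\gamma_0'$ either avoids the ray through $e_{r+1}$ or contains the rays through both $e_r$ and $e_{r+1}$; deleting the $e_{r+1}$-direction then produces a relevant face $\gamma_0$ of $X$, and since $Q'(e_{r+1}) = Q(e_r)$ with $e_r \in \gamma_0$ in the second case, one checks $Q'(\lin(\gamma_0')\cap\ZZ^{r+1}) = Q(\lin(\gamma_0)\cap\ZZ^{r})$, so the two factoriality conditions coincide. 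Because $\Sing(\ol{X}') = \Sing(\ol{X}'')\times\KK^2$ while $\Sing(\ol{X}) = \Sing(\ol{X}'')\times\KK$, the stratum-smoothness conditions correspond as well; and conversely every relevant face of $X$ arises in this way. Running the equivalence in both directions shows that $X'$ is smooth iff $X$ is.

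Finally, for (v) I would assume $R$ a complete intersection, $\deg(T_r)$ semiample and $X$ Fano. Adjunction for complete intersections (cf.\ Remark~\ref{rem:fanoRAP} and \cite{ArDeHaLa}) gives $-\mathcal{K}_{X'} = \sum_{i=1}^{r+1}\deg(T_i) - \sum_j \deg(g_j) = -\mathcal{K}_X + \deg(T_{r+1}) = -\mathcal{K}_X + \deg(T_r)$. For each relevant face $\gamma_0$ of $X$ we have $-\mathcal{K}_X \in Q(\gamma_0)^\circ$ (as $X$ is Fano) and $\deg(T_r) \in Q(\gamma_0)$ (as $\deg(T_r)$ is semiample), and since the relative interior of a convex cone absorbs the cone, $-\mathcal{K}_X + \deg(T_r) \in Q(\gamma_0)^\circ$; intersecting over $\rlv(X)$ gives $-\mathcal{K}_{X'} \in \Ample(X) = \Ample(X')$, i.e.\ $X'$ is Fano. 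I expect the step needing most care to be the face bookkeeping in (iii) — matching, up to the swap $\iota$, the relevant faces of $X'$ meeting the ray through $e_{r+1}$ with those of $X$ — together with checking that the anticanonical identity in (v) is used only where the complete-intersection adjunction formula is available.
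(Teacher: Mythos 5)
Your proposal is correct and follows essentially the same route as the paper: (i), (ii), (iv) from $R'=R[T_{r+1}]$ and the unchanged bunch $\Phi$, (iii) by matching relevant faces and strata between $X$ and $X'$ via the smoothness criterion, and (v) by the complete-intersection adjunction formula $-\mathcal{K}_{X'}=-\mathcal{K}_X+\deg(T_{r+1})$ together with the coincidence of the ample cones. The only cosmetic difference is that in (iii) you organize the face bookkeeping via the swap involution $\iota$, whereas the paper lists the three types $\gamma_0,\gamma_0',\gamma_0''$ of relevant $\mathfrak{F}'$-faces explicitly; the content is the same.
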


\begin{proof}
By construction, $\dim(R') = \dim(R)+1$ holds.
Since $R$ and $R'$ have the same grading 
group~$K$, we obtain~(i).
Moreover, $R$ and $R'$ have the same
defining relations $g_j$, hence we have~(iv).
According to~\cite[Prop.~3.3.2.9]{ArDeHaLa}, 
the semiample cone is the intersection of all
elements of $\Phi$ and thus~(ii) holds.

To obtain the third assertion, 
we show first that $\wh{X}'$ is smooth
if and only if $\wh{X}$ is smooth.
For every relevant $\mathfrak{F}$-face 
$\gamma_0 \preceq \QQ_{\ge 0}^r$ consider
$$ 
\gamma_0' 
\ := \ 
\gamma_0 + \cone(e_{r+1}),
\qquad
\gamma_0''
\ := \
\cone(e_{i}; \; 1 \le i < r, \ e_i \in \gamma_0 ) 
+ 
\cone(e_{r+1}).
$$ 
Then $\gamma_0,\gamma_0',\gamma_0''  \preceq \QQ_{\ge 0}^{r+1}$
are relevant $\mathfrak{F}'$-faces and,
in fact, all relevant $\mathfrak{F}'$-faces are
of this form.
Since the variables $T_r$ and $T_{r+1}$ do not appear
in the relations $g_j$, we see that a stratum
$\ol{X}(\gamma_0)$ is smooth if and only the strata
$\ol{X}'(\gamma_0)$,  $\ol{X}'(\gamma_0')$ and 
$\ol{X}'(\gamma_0'')$ are smooth.
Now~\cite[Cor.~3.3.1.11]{ArDeHaLa} gives~(iii).

Finally, we show~(v). 
As we have complete intersection Cox rings,
\cite[Prop.~3.3.3.2]{ArDeHaLa} applies 
and we obtain
$$
-\mathcal{K}_{X'} 
\ = \ 
\sum_{i=1}^{r+1} \deg(T_i) - \sum_{j=1}^s \deg(g_j)
\ = \ 
-\mathcal{K}_{X} + \deg(T_{r+1}).
$$
Since $X$ and $X'$ share the same ample cone,
we conclude that ampleness of  $-\mathcal{K}_{X}$ 
implies ampleness of $-\mathcal{K}_{X'}$,
\end{proof}

We interprete the duplication of free weights
in terms of birational geometry: it turns out to
be a composition of a Mori fiber space, a series 
flips and a birational divisorial contraction, 
where the two contractions both are elementary;
see~\cite{Ca} for a detailed study the latter 
type of maps in the context of general smooth 
Fano 4-folds.

\begin{proposition}
\label{prop:duplicate-geom}
Let $(R',\mathfrak{F}',\Phi)$ arise from the bunched ring 
$(R,\mathfrak{F},\Phi)$ via Construction~\ref{const:duplicate}.
Set $X':=X(R',\mathfrak{F}',\Phi)$ and 
$X:=X(R,\mathfrak{F},\Phi)$.
Assume that $X$ is $\QQ$-factorial.
Then there is a sequence
$$
X 
\ \longleftarrow \ 
\widetilde{X}_1 
\ \dashrightarrow \ \ldots \ \dashrightarrow \
\widetilde{X}_t 
\ \longrightarrow \ 
X',
$$
where $\widetilde{X}_1 \to X$ is a Mori fiber space 
with fibers $\PP_1$,
every $\widetilde{X}_i \dashrightarrow \widetilde{X}_{i+1}$ 
is a flip
and $\widetilde{X}_t \to X'$ is the contraction of 
a prime divisor.
If $\deg(T_r) \in K$ is Cartier, 
then $\widetilde{X}_1 \to X$ is 
the $\PP_1$-bundle associated with the divisor 
on $X$ corresponding to $T_r$.
\end{proposition}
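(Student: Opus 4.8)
The plan is to realise $X$ and $X'$ as the two extreme quotients of a single total coordinate space and then to join the corresponding nef cones inside its Mori chamber decomposition. First I would put $R := \mathcal{R}(X)$, $K := \Cl(X)$, fix a prime divisor $D_r$ on $X$ of class $\deg(T_r) \in K$, and pass to the polynomial extension $\widetilde{R} := R[S_0,S_1]$, graded by $\widetilde{K} := K \times \ZZ$ via $\deg(T) := (\deg_R(T),0)$ for the generators $T \in \mathfrak{F}$, $\deg(S_0) := (0,1)$ and $\deg(S_1) := (\deg(T_r),1)$. By~\cite[Thm.~1.4]{Be} the ring $\widetilde{R}$ is normal and factorially $\widetilde{K}$-graded, $\widetilde{\mathfrak{F}} := \mathfrak{F} \cup \{S_0,S_1\}$ is a system of $\widetilde{K}$-prime generators, and $\widetilde{R}$ is finitely generated since $R$ is. Note $\dim(\widetilde{R}) = \dim(R)+2$, so every variety $X(\widetilde{R},\widetilde{\mathfrak{F}},\widetilde{\Phi})$ has dimension $\dim(X)+1$ and divisor class group $\widetilde{K}$ of rank $\rho(X)+1$.

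Next I would single out the two extreme GIT chambers of $\widetilde{R}$ inside the cone $\Mov(\widetilde{R},\widetilde{\mathfrak{F}})$. On one end, the first projection $\widetilde{K} \to K$ together with the inclusion $R \subseteq \widetilde{R}$ induces, for the chamber $\widetilde{\Phi}_1$ adjacent to the boundary face $\SAmple(X) \times \{0\}$ of $\Mov(\widetilde{R},\widetilde{\mathfrak{F}})$, a morphism $\varphi \colon \widetilde{X}_1 \to X$; inspecting the toric charts à la~\cite{ArDeHaLa} shows $\varphi$ is extremal with one-dimensional fibres, hence a Mori fibre space with fibre $\PP_1$, and identifies $\widetilde{X}_1$ with the projectivisation $\PP_X(\Of_X \oplus \Of_X(D_r))$ of a rank-two divisorial sheaf --- a genuine $\PP_1$-bundle precisely when $\deg(T_r)$ is Cartier, where $\Of_X(D_r)$ is locally free. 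On the other end, de-homogenising $\widetilde{R}$ with respect to $S_0$ gives $R[S_1] \cong R' = \mathcal{R}(X')$ with $\deg(S_1) = \deg(T_{r+1})$; accordingly there is a chamber $\widetilde{\Phi}_t$ such that on $\widetilde{X}_t := X(\widetilde{R},\widetilde{\mathfrak{F}},\widetilde{\Phi}_t)$ the prime divisor $D_{S_0} := \{S_0 = 0\}$ is extremally contractible and the contraction has image $X'$; here one invokes Proposition~\ref{prop:duplicate}~(ii), i.e.\ $\SAmple(X') = \SAmple(X)$, to see that this $\QQ$-factorial contraction lands exactly on $X(R',\mathfrak{F}',\Phi)$ and drops exactly $D_{S_0}$.

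Finally I would connect $\widetilde{\Phi}_1$ and $\widetilde{\Phi}_t$ within $\Mov(\widetilde{R},\widetilde{\mathfrak{F}})$. As $X$ is $\QQ$-factorial, both chambers are full-dimensional, and the interior of each lies in the interior of the convex cone $\Mov(\widetilde{R},\widetilde{\mathfrak{F}})$; a general segment between interior points therefore stays in that interior and crosses finitely many full-dimensional chambers $\widetilde{\Phi}_1,\dots,\widetilde{\Phi}_t$, each crossing inducing a small birational modification $\widetilde{X}_i \dashrightarrow \widetilde{X}_{i+1}$ of $\QQ$-factorial varieties by variation of GIT and the Mori chamber description of $\Mov$ (cf.~\cite{ArDeHaLa}). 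Since none of the intermediate walls lies on $\partial\Mov(\widetilde{R},\widetilde{\mathfrak{F}})$, no divisor is contracted and no fibration occurs along the way; as all $\widetilde{X}_i$ share the same anticanonical class (the formula $-\mathcal{K} = \sum \deg(T) - \sum \deg(g)$ of~\cite[Prop.~3.3.3.2]{ArDeHaLa} is independent of the chamber), orienting the segment towards $\widetilde{\Phi}_t$ makes each $\widetilde{X}_i \dashrightarrow \widetilde{X}_{i+1}$ a flip. I expect the main obstacle to be the second paragraph: one must extract from the bunched-ring formalism that the two boundary contractions attached to $\widetilde{\Phi}_1$ and $\widetilde{\Phi}_t$ really are ``Mori fibre space onto $X$ with fibre $\PP_1$'' and ``divisorial contraction onto $X'$'', which is what fixes the choice of the $\widetilde{K}$-grading and of the two extreme chambers and requires careful bookkeeping; granted this, the $\PP_1$-bundle refinement in the Cartier case follows from the standard description of $\mathcal{R}(\PP_X(\Of_X \oplus \Of_X(D_r)))$, and the identification of the remaining steps as flips is routine.
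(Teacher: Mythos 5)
Your proposal follows essentially the same route as the paper: both adjoin two auxiliary variables of degrees $(0,1)$ and $(\deg(T_r),1)$ to form $\widetilde{R}=R[S^+,S^-]$ graded by $K\times\ZZ$, identify the chamber over $\SAmple(X)\times\{0\}$ with the $\PP_1$-fibration $\widetilde{X}_1\to X$, and traverse the GIT-fan towards the chamber whose wall-crossing contracts the divisor of the degree-$(0,1)$ variable, yielding the flips and the final divisorial contraction onto $X'$. The only substantive work you defer --- verifying that the terminal contraction really lands on $X(R',\mathfrak{F}',\Phi)$ --- is carried out in the paper by an explicit comparison of class groups, Cox rings and bunches via the canonical toric embeddings, so your plan is sound as stated.
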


\begin{proof}
In order to define $\widetilde{X}_1$, we consider 
the canonical toric embedding $X \subseteq Z$ in
the sense of~\cite[Constr.~3.2.5.3]{ArDeHaLa}.
Let $\Sigma$ be the fan of $Z$ and  
$P = [v_1,\ldots,v_r]$ be the matrix having the 
primitive generators $v_i \in \ZZ^{n}$ of the rays 
of $\Sigma$ as its columns. 
Define a further matrix
$$ 
\widetilde{P}
\ := \ 
\left[
\begin{array}{cccccc}
v_1 & \ldots & v_{r-1} & v_r & 0 & 0 
\\
 0  & \ldots &  0    & -1  & 1 & -1 
\\
\end{array}
\right].
$$
We denote the columns of  $\widetilde{P}$ by 
$\widetilde{v}_1, \ldots, \widetilde{v}_r,
\widetilde{v}_+,\widetilde{v}_- \in \ZZ^{n+1}$,
write $\varrho_+$, $\varrho_-$ for the rays through 
$\widetilde{v}_+$, $\widetilde{v}_-$ and define 
a fan 
$$ 
\widetilde{\Sigma}_1
\ := \ 
\{
\widetilde{\sigma} + \varrho_+, \,
\widetilde{\sigma} + \varrho_-, \,
\widetilde{\sigma}; \;
\sigma \in \Sigma
\},
\qquad\qquad
\widetilde{\sigma}
\ := \ 
\cone(\widetilde{v}_{i}; v_i \in \sigma).
$$
The projection $\ZZ^{n+1} \to \ZZ^{n}$ is a map
of fans $\widetilde{\Sigma}_1 \to  \Sigma$.
The associated toric morphism $\widetilde{Z}_1 \to Z$
has fibers $\PP_1$.
If the toric divisor $D_r$ corresponding to 
the ray through $v_r$ is Cartier, then 
$\widetilde{Z}_1 \to Z$ is the $\PP_1$-bundle
associated with $D_r$.
We define $\widetilde{X}_1 \subseteq \widetilde{Z}_1$ 
to be the preimage of $X \subseteq Z$.
Then $\widetilde{X}_1 \to X$ has fibers $\PP_1$.
If $\deg(T_r)$ is Cartier,
then so is $D_r$ and hence
$\widetilde{X}_1 \to X$ inherits the $\PP_1$-bundle structure.

Now we determine the Cox ring of the 
variety~$\widetilde{X}_1$.
For this, observe that the projection
$\ZZ^{r+2} \to \ZZ^{r}$ defines a lift of 
$\widetilde{Z}_1 \to Z$ to the toric 
characteristic spaces and thus leads 
to the commutative diagram
$$ 
\xymatrix{
{\widetilde{\pi}^\sharp(\widetilde{X}_1)}
\ar@{}[r]|\subseteq
\ar[d]_{\widetilde{\pi}}
&
{\widetilde{W}_1}
\ar[r]
\ar[d]_{\widetilde{\pi}}
& 
W
\ar[d]^{\pi}
&
{\pi^\sharp(X)}
\ar@{}[l]|\supseteq
\ar[d]^{\pi}
\\
{\widetilde{X}_1}
\ar@{}[r]|\subseteq
&
{\widetilde{Z}_1}
\ar[r]
&
Z
&
X
\ar@{}[l]|\supseteq
}
$$ 
where $\widetilde{\pi}^\sharp(\widetilde{X}_1)$
and $\pi^\sharp(X)$ denote the proper transforms 
with respect to the downwards toric morphisms.  
Pulling back the defining equations of 
$\pi^\sharp(X) \subseteq W$, we see that 
$\widetilde{\pi}^\sharp(\widetilde{X}_1) \subseteq \widetilde{W}_1$
has coordinate algebra
$\widetilde{R} :=R[S^+,S^-]$ 
graded by $\widetilde{K} := K\times \ZZ$ via
$$
\deg(T_i) := (w_i,0), 
\qquad 
w^+ := \deg(S^+) := (w_r,1), 
\qquad 
w^- := \deg(S^-) := (0,1),
$$
where $w_i := \deg(T_i)\in K$.
The $\KK$-algebra $\widetilde{R}$ is normal and, 
by~\cite[Thm.~1.4]{Be}, factorially 
$\widetilde{K}$-graded.
Moreover the $\widetilde{K}$-grading is 
almost free, as the $K$-grading of $R$ 
has this property
and 
$\widetilde{\mathfrak{F}}=(T_1, \ldots, T_r, S^+, S^-)$
is a system of pairwise non-associated 
$\widetilde{K}$-prime generators.
We conclude that $\widetilde{R}$ is the
Cox ring of $\widetilde{X}_1$.

Next we look for the defining bunch of cones 
for $\widetilde{X}_1$.
Observe that $K$ sits inside $\widetilde{K}$ 
as $K \times \{0\}$.
With $\theta := \SAmple(X)\times\{0\}$ we 
obtain a GIT-cone 
$\theta_1  := \cone(\theta,w^+)  \cap  \cone(\theta,w^-)$
of the $\widetilde{K}$-graded ring $\widetilde{R}$.
The associated bunch $\widetilde{\Phi}_1$ consists 
of all cones of the form
$$
\widetilde{\tau} + \cone(w^+), 
\qquad
\widetilde{\tau} + \cone(w^-), 
\qquad
\widetilde{\tau} + \cone(w^+, w^-),
$$
where $\widetilde{\tau} =\tau \times \{0\}$, $\tau \in \Phi$. 
Since $\Phi$ is a true bunch, so is $\widetilde{\Phi}_1$.
Together we obtain a bunched ring 
$(\widetilde{R}, \widetilde{\mathfrak{F}}, \widetilde{\Phi}_1)$.
By construction, the fan corresponding to 
$\widetilde{\Phi}_1$ via Gale duality is $\widetilde{\Sigma}_1$.
We conclude that $\widetilde{X}_1$ is the variety 
associated with $(\widetilde{R}, \widetilde{F}, \widetilde{\Phi}_1)$
and $\widetilde{X}_1 \subseteq \widetilde{Z}_1$ is 
the canonical toric embedding.

Observe that $\widetilde{X}_1 \to X$ corresponds to the 
passage from the GIT-cone $\theta_1$ 
to the facet $\theta$.
In particular, we see that $\widetilde{X}_1 \to X$ is a 
Mori fiber space.
To obtain the flips and the final divisorial contraction,
we consider the full GIT-fan.
\begin{center}
\begin{tikzpicture}[scale=0.5]
   \path[fill=gray!20!] (1.8,0)--(6,0)--(6.5,6.5)--(1.8,0);  
   \path[fill=gray!60!] (1.8,0)--(6,0)--(57/16, 39/16)--(1.8,0);  
   \path[fill=gray!60!] (1713/466, 1209/466)--(207/34, 39/34)--(105/17, 39/17)--(372/97, 273/97)--(1371/370, 195/74); 
   \coordinate[] (w1) at (0,0);
   \node at (-0.5,0.3) {{\tiny{$w_r$}}};
   \fill (w1) circle (3pt);
   \coordinate[] (w2) at (1.8,0);
   \fill (w2) circle (3pt);
   \coordinate[] (w3) at (6,0);
   \fill (w3) circle (3pt);
   \coordinate[] (w4) at (8,0);
   \fill (w4) circle (3pt);
   \coordinate[] (w5) at (16.5,0);
   \fill (w5) circle (3pt);
   \coordinate (wp) at (3,3);   
   \node at (2.5,3.3) {\tiny{$w^+$}};
   \fill (wp) circle (3pt);
   \coordinate(wm) at (6.5,6.5);
   \node at (6.5,7) {\tiny{$w^-$}};
   \fill (wm) circle (3pt);
  \coordinate(w0) at (-3.5,0);
   \fill (w0) circle (3pt);
   \draw (w5) -- (wm) -- (wp) -- (w1);
   \draw (wp) -- (w1);
   \draw (wp) -- (w2);
   \draw (wp) -- (w3);
   \draw (wp) -- (w4);
   \draw (wp) -- (w5);
   \draw (wm) -- (w1);
   \draw (wm) -- (w2);
   \draw (wm) -- (w3);
   \draw (wm) -- (w4);
   \draw (wm) -- (w5);
   \draw (w0) -- (w5);
   \draw (w0) -- (wp);
   \draw (w0) -- (wm);
   \draw[decorate, ultra thick] (w2) -- (w3);
   \node at (3.9,-0.45) {\tiny{{$\theta$}}};
   \node at (3.9,0.7) {{\tiny{$\theta_1$}}};
   \node at (5.7,2) {{\tiny{$\theta_t$}}};
   \node at (5.5,3.5) {{\tiny{$\theta_{t+1}$}}};
   \path[densely dotted, ->] (3.85,1.05) edge [bend left] (4.85, 1.5); 
   \path[densely dotted, ->] (4.85, 1.5) edge [bend left] (5.4, 2.05); 
\end{tikzpicture}
\end{center}
Important are the GIT-cones inside $\theta + \cone(w^-)$.
There we have the facet $\theta$ 
and the semiample cone $\theta_1$ of $\widetilde{X}_1$.
Proceding in the direction of $w^-$,
we come across other full-dimensional GIT-cones,
say $\theta_2,\ldots,\theta_{t+1}$.
This gives a sequence of flips 
$\widetilde{X}_1\dashrightarrow \ldots\dashrightarrow\widetilde{X}_{t}$,
where $\widetilde{X}_i$ is the variety with 
semiample cone $\theta_i$.
Passing from $\theta_t$ to $\theta_{t+1}$
gives a morphism $\widetilde{X}_{t} \to \widetilde{X}_{t+1}$
contracting the prime divisor corresponding to the 
variable $S^-$ of the Cox ring $\widetilde{R}$ of
$\widetilde{X}_{t}$.
Note that $\widetilde{X}_{t+1}$ is $\QQ$-factorial,
as it is the GIT-quotient associated 
with a full-dimensional chamber.

We show $\widetilde{X}_{t+1}\cong X'$.
Recall that $X'$ arises from $X$ by duplicating the 
weight $\deg(T_r)$.
We have $\Cl(X') = K$ and the Cox ring 
$R'=R[T_{r+1}]$ of $X'$ is $K$-graded via 
$\deg(T_i)=w_i$ for $i=1,\ldots,r$ and $\deg(T_{r+1})=w_r$.
In particular, the fan of the canonical 
toric ambient variety of $X'$ has as its primitive
ray generators the columns of the matrix
$$
P' 
\ = \
\left[
\begin{array}{ccccc}
v_1 & \ldots & v_{r-1} & v_r & 0
\\
 0  & \ldots &  0    & -1  & 1
\\
\end{array}
\right].
$$
On the other hand, the canonical toric ambient 
variety $\widetilde{Z}_{t+1}$ of $\widetilde{X}_{t+1}$ 
is obtained from $\widetilde{Z}_{t}$ by contracting 
the divisor corresponding to the ray $\varrho_-$.
Hence $P'$ is as well the primitive generator 
matrix for the fan of $\widetilde{Z}_{t+1}$.
We conclude
$$
\Cl(\widetilde{X}_{t+1}) 
\ = \ 
\ZZ^{r+1} /\text{ im}((P')^*) 
\ = \ 
\Cl(X') 
\ = \ 
K.
$$
Similarly, we compare the Cox rings of 
$\widetilde{X}_{t+1}$ and $X'$.
Let $\widetilde{Z}_t$ denote the canonical 
toric ambient variety of $\widetilde{X}_t$.
Then the projection
$\ZZ^{r+2} \to \ZZ^{r+1}$ defines a lift of 
$\widetilde{Z}_t \to \widetilde{Z}_{t+1}$ 
to the toric characteristic spaces 
and thus leads to the commutative diagram
$$ 
\xymatrix{
{\widetilde{\pi}^\sharp(\widetilde{X}_t)}
\ar@{}[r]|\subseteq
\ar[d]_{\widetilde{\pi}}
&
{\widetilde{W}_t}
\ar[r]
\ar[d]_{\widetilde{\pi}}
& 
{\widetilde{W}_{t+1}}
\ar[d]^{\pi}
&
{\pi^\sharp(\widetilde{X}_{t+1})}
\ar@{}[l]|\supseteq
\ar[d]^{\pi}
\\
{\widetilde{X}_t}
\ar@{}[r]|\subseteq
&
{\widetilde{Z}_t}
\ar[r]
&
{\widetilde{Z}_{t+1}}
&
{\widetilde{X}_{t+1}}
\ar@{}[l]|\supseteq
}
$$ 
where the proper transforms
$\widetilde{\pi}^\sharp(\widetilde{X}_t)$
and $\pi^\sharp(\widetilde{X}_{t+1})$ 
are the characteristic spaces of 
$\widetilde{X}_t$ and $\widetilde{X}_{t+1}$
respectively and the first is mapped 
onto the second one.
We conclude that the Cox ring of 
$\widetilde{X}_{t+1}$ is $R[S^+]$ 
graded by
$\deg(T_i)=w_i$ for $i = 1, \ldots, r$
and $\deg(S^+)=w_{r}$
and thus is isomorphic to the Cox 
ring $R'$ of $X'$.

The final step is to compare the defining 
bunches of cones $\widetilde{\Phi}_{t+1}$ 
of $\widetilde{X}_{t+1}$ and $\Phi'$ of $X'$.
For this, observe that the fan of the toric 
ambient variety $\widetilde{Z}_{t+1}$ 
contains the cones 
$\widetilde{\sigma} + \varrho_+$,
where $\sigma \in \Sigma$.
Thus, every $\tau \in \Phi'$
belongs to $\widetilde{\Phi}_{t+1}$.
We conclude 
$$
\SAmple(\widetilde{X}_{t+1})
\ \subseteq \ 
\SAmple(X').
$$
Since $\widetilde{X}_{t+1}$ is $\QQ$-factorial,
its semiample cone is of full dimension.
Both cones belong to the GIT-fan,
hence we see that the above inclusion is in
fact an equality.
Thus $\widetilde{\Phi}_{t+1}$ equals $\Phi'$.
\end{proof}

We return to the Fano varieties of Theorem~\ref{thm:main2}.
We first list the (finitely many) examples which do not allow 
duplication of a free weight and then present the 
starting models for constructing the Fano varieties 
via duplication of weights.

\begin{proposition}
\label{prop:noisodivs}
The varieties of Theorem~\ref{thm:main2} containing
no divisors with infinite general isotropy are 
precisely the following ones.

\begin{center}
{\small
\setlength{\tabcolsep}{4pt}
\begin{longtable}{ccccc}
No.
&
\small{$\mathcal{R}(X)$}
&
\small{$[w_1,\ldots, w_r]$}
&
\small{$-\mathcal{K}_X$}
&
\small{$\dim(X)$}
\\
\toprule
1
&
$
\frac
{\KK[T_1, \ldots , T_7]}
{\langle T_{1}T_{2}T_{3}^2+T_{4}T_{5}+T_6T_7 \rangle}
$
&
\tiny{
\setlength{\arraycolsep}{2pt}
$
\left[\!\!
\begin{array}{ccccccc}
0 & 0 & 1 & 1 & 1 & 1 & 1 
\\ 
1 & 1 & 0 & 1 & 1 &1 & 1
\end{array}
\!\!\right]
$
}
&
\tiny{
\setlength{\arraycolsep}{2pt}
$
\left[\!\!
\begin{array}{c}
3
\\
4
\end{array}
\!\!\right]
$
}
&
\small{$4$}
\\
\midrule
2
&
$
\frac
{\KK[T_1, \ldots , T_7]}
{\langle T_{1}T_{2}T_{3}+T_{4}T_{5}+T_6T_7 \rangle}
$
&
\tiny{
\setlength{\arraycolsep}{2pt}
$
\left[\!\!
\begin{array}{ccccccc}
0 & 0 & 1 & 1 & 0 & 1 & 0 
\\ 
1 & 1 & 0 & 1 & 1 & 1 & 1
\end{array}
\!\!\right]
$
}
&
\tiny{
\setlength{\arraycolsep}{2pt}
$
\left[\!\!
\begin{array}{c}
2
\\
4
\end{array}
\!\!\right]
$
}
&
\small{$4$} 
\\
\midrule
3
&
$
\frac{\KK[T_1, \ldots , T_6]}
{\langle T_{1}T_{2}T_{3}^2+T_{4}T_{5}+T_{6}^2 \rangle}
$
&
\tiny{
\setlength{\arraycolsep}{2pt}
$
\left[\!\!
\begin{array}{cccccc}
0 & 0 & 1 & 1 & 1 & 1 
\\ 
1 & 1 & 0 & 1 & 1 & 1
\end{array}
\!\!\right]
$
}
&
\tiny{
\setlength{\arraycolsep}{2pt}
$
\left[\!\!
\begin{array}{c}
2
\\
3
\end{array}
\!\!\right]
$
}
&
\small{$3$} 
\\
\midrule
4.A
&
$
\frac
{\KK[T_1, \ldots , T_6]}
{\langle T_{1}T_{2}+T_{3}T_{4}+T_5T_{6} \rangle}
$
&
\tiny{
\setlength{\arraycolsep}{2pt}
$
\left[\!\!
\begin{array}{cccccc}
0 & 1 & 0 & 1 & 0 & 1  
\\ 
1 & 0 & 1 & 0 & 1 & 0 
\end{array}
\!\!\right]
$
}
&
\tiny{
\setlength{\arraycolsep}{2pt}
$
\left[\!\!
\begin{array}{c}
2 
\\
2
\end{array}
\!\!\right]
$
}
&
\small{$3$}
\\
\midrule
4.B
&
$
\frac
{\KK[T_1, \ldots , T_6]}
{\langle T_{1}T_{2}^2+T_{3}T_{4}+T_5T_{6} \rangle}
$
&
\tiny{
\setlength{\arraycolsep}{2pt}
$
\left[\!\!
\begin{array}{cccccc}
0 & 1 & 1 & 1 & 1 & 1 
\\ 
1 & 0 & 1 & 0 & 1 & 0 
\end{array}
\!\!\right]
$
}
&
\tiny{
\setlength{\arraycolsep}{2pt}
$
\left[\!\!
\begin{array}{c}
3
\\
2
\end{array}
\!\!\right]
$
}
&
\small{$3$}
\\
\midrule
4.C
&
$
\frac
{\KK[T_1, \ldots , T_6]}
{\langle T_{1}T_{2}^2+T_{3}T_{4}^2+T_5T_{6}^2 \rangle}
$
&
\tiny{
\setlength{\arraycolsep}{2pt}
$
\left[\!\!
\begin{array}{cccccc}
0 & 1 & 0 & 1 & 0 & 1  
\\ 
1 & 0 & 1 & 0 & 1 & 0 
\end{array}
\!\!\right]
$
}
&
\tiny{
\setlength{\arraycolsep}{2pt}
$
\left[\!\!
\begin{array}{c}
1
\\
2
\end{array}
\!\!\right]
$
}
&
\small{$3$}
%
%
%
%
\\
\midrule
13
&
$
\begin{array}{c}
\frac
{\KK[T_1, \ldots , T_8]}
{
\left\langle 
\begin{array}{l}
\scriptstyle T_{1}T_{2}+T_{3}T_{4}+T_5T_{6},
\\[-3pt]
\scriptstyle \lambda T_{3}T_{4}+T_{5}T_{6}+T_{7}T_{8} 
\end{array}
\right\rangle
}
\\
\scriptstyle \lambda \in \KK^*  \setminus \{1\}
\end{array}
$
&
\tiny{
\setlength{\arraycolsep}{2pt}
$
\left[\!\! 
\begin{array}{cccccccc}
1 & 0 & 1 & 0 & 1 & 0 & 1 & 0 
\\ 
0 & 1 & 0 & 1 & 0 & 1 & 0 & 1
\end{array}
\!\!\right]
$
}
&
\tiny{
\setlength{\arraycolsep}{2pt}
$
\left[\!\! 
\begin{array}{c}
2
\\
2
\end{array}
\!\!\right]
$
}
&
\small{$4$}
\\
\bottomrule
\end{longtable}
}
\end{center}

\end{proposition}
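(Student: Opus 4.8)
The plan is to reformulate the condition of the proposition as a numerical condition on the defining matrix $P$ and then to scan the table of Theorem~\ref{thm:main2}.

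First I would recall, from~\cite{HaSu:2010} (see also~\cite{ArDeHaLa}), the description of the invariant prime divisors of $X=X(A,P,u)$. They are in bijection with the rays of $\Sigma_X$, hence with the columns of $P$, and they fall into the \emph{vertical} divisors $D_X^{ij}$ corresponding to the columns $v_{ij}$ and the \emph{horizontal} divisors $D_X^{k}$ corresponding to the columns $v_k$. With respect to the rational quotient $X \dashrightarrow \PP_1$, each $D_X^{ij}$ is contracted to a point whereas each $D_X^{k}$ dominates $\PP_1$; accordingly the general isotropy group of $D_X^{ij}$ is finite while that of $D_X^{k}$ is positive-dimensional. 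Since the locus of points of $X$ with positive-dimensional isotropy group is closed and $T$-invariant, any prime divisor with infinite general isotropy is itself $T$-invariant, hence coincides with some $D_X^{ij}$ or $D_X^{k}$; as the former have finite general isotropy, it must be one of the $D_X^{k}$. Consequently $X$ contains no divisor with infinite general isotropy if and only if $m=0$, equivalently if and only if $\mathcal{R}(X)$ has no free generator $S_k$; yet another equivalent formulation is that $\Sigma_X$ has no ray inside the lineality space $\lambda$ of $\trop(X)$.

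Then I would go through the thirteen families of Theorem~\ref{thm:main2} and single out those with $m=0$. For Nos.~5 to~12 the constraints on the parameters force $m\ge 1$: No.~5 requires $2a<m$ with $a\ge 0$, No.~6 requires $m>3c+1$ with $c\ge 0$, Nos.~7 and~10 carry the explicit bound $m\ge 1$, and Nos.~8, 9, 11 and~12 carry the bound $m\ge 2$; so each of these varieties has a horizontal divisor and is excluded. For Nos.~1, 2, 3 and~13 the Cox ring has no free generator at all, so $m=0$ and these four families remain. Finally, family No.~4 carries the free parameter $m\ge 0$; substituting $m=0$ into the analysis of Case~\ref{case:d}, configuration~(i), where $d=b$ since $m=0$, the Fano inequality $3+b+c_1+\dots+c_m-l_{12}>(2+m)d$ reduces to $3+b-l_{12}>2b$, whose only solutions (subject to $0\le a\le b$ and $l_{02}=a+l_{12}=b+l_{22}$ with $l_{ij}\ge 1$) are the tuples $(a,b,d,l_{02},l_{12},l_{22})$ equal to $(0,0,0,1,1,1)$, $(1,1,1,2,1,1)$ and $(0,0,0,2,2,2)$; writing out the corresponding defining data in terms of the generators $T_i$ gives exactly Nos.~4.A, 4.B and~4.C of the table specialized to $m=0$. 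Collecting these cases yields precisely the seven varieties displayed, and conversely each of them has $m=0$ and hence contains no divisor with infinite general isotropy.

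The one step requiring genuine care is the $m=0$ specialization of family No.~4: one has to check both that no Fano case with $m=0$ has been overlooked and that the three resulting Cox ring presentations coincide with the tabulated Nos.~4.A--4.C. Once the dictionary between free generators $S_k$ of the Cox ring and horizontal prime divisors with infinite general isotropy is in place, everything else is a matter of inspecting the parameter ranges in Theorem~\ref{thm:main2}.
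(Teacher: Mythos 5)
Your proposal is correct and follows essentially the same route as the paper: the paper's proof consists precisely of the observation that the divisors with infinite general $T$-isotropy are the vanishing sets $V(S_k)$ of the free variables, so that the condition is $m=0$, followed by picking out the $m=0$ entries of Theorem~\ref{thm:main2}. Your additional justification of the dictionary between free generators and horizontal divisors, and your explicit recheck of the $m=0$ specialization of family No.~4, are consistent with what the paper leaves implicit.
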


\begin{proof}
For a $T$-variety $X = X(A,P,u)$, the divisors 
having infinite general $T$-isotropy are 
precisely the vanishing sets of the variable 
$S_k$. 
Thus we just have to pick out the cases with $m=0$
from Theorem~\ref{thm:main2}. 
\end{proof}

\begin{theorem}
\label{thm:duplicate}
Let $X$ be a smooth rational Fano variety with a torus 
action of complexity one and Picard number two.
If there is a prime divisor with infinite general isotropy
on $X$, then $X$ arises via iterated duplication of the 
free weight~$w_r$ from one of the following varieties~$Y$.
\begin{center}
{\small
\setlength{\tabcolsep}{4pt}
\begin{longtable}{ccccc}
No.
&
\small{$\mathcal{R}(Y)$}
&
\small{$[w_1,\ldots, w_r]$}
&
\small{$u$}
&
\small{$\dim(Y)$}
\\
\toprule
4.A
&
$
\frac
{\KK[T_1, \ldots , T_6, S_1]}
{\langle T_{1}T_{2}+T_{3}T_{4}+T_5T_{6} \rangle}
$
&
\tiny{
\setlength{\arraycolsep}{2pt}
$
\left[\!\!
\begin{array}{cccccc|c}
0 & 1 & 0 & 1 & 0 & 1 & 0 
\\ 
1 & 0 & 1 & 0 & 1 & 0 & 1
\end{array}
\!\!\right]
$
}
&
\tiny{
\setlength{\arraycolsep}{2pt}
$
\left[\!\!
\begin{array}{c}
1
\\
1
\end{array}
\!\!\right]
$
}
&
\small{$4$}
\\
\midrule
4.A
&
$
\frac
{\KK[T_1, \ldots , T_6, S_1,S_2]}
{\langle T_{1}T_{2}+T_{3}T_{4}+T_5T_{6} \rangle}
$
&
\tiny{
\setlength{\arraycolsep}{2pt}
$
\left[\!\!
\begin{array}{cccccc|cc}
0 & 1 & 0 & 1 & 0 & 1 & -1 & 0
\\ 
1 & 0 & 1 & 0 & 1 & 0 & 1 & 1
\end{array}
\!\!\right]
$
}
&
\tiny{
\setlength{\arraycolsep}{2pt}
$
\left[\!\!
\begin{array}{c}
1
\\
1
\end{array}
\!\!\right]
$
}
&
\small{$5$}
\\
\midrule
4.B
&
$
\frac
{\KK[T_1, \ldots , T_6, S_1]}
{\langle T_{1}T_{2}^2+T_{3}T_{4}+T_5T_{6} \rangle}
$
&
\tiny{
\setlength{\arraycolsep}{2pt}
$
\left[\!\!
\begin{array}{cccccc|c}
0 & 1 & 1 & 1 & 1 & 1 & 1 
\\ 
1 & 0 & 1 & 0 & 1 & 0 & 1 
\end{array}
\!\!\right]
$
}
&
\tiny{
\setlength{\arraycolsep}{2pt}
$
\left[\!\!
\begin{array}{c}
2
\\
1
\end{array}
\!\!\right]
$
}
&
\small{$4$}
\\
\midrule
4.C
&
$
\frac
{\KK[T_1, \ldots , T_6, S_1]}
{\langle T_{1}T_{2}^2+T_{3}T_{4}^2+T_5T_{6}^2 \rangle}
$
&
\tiny{
\setlength{\arraycolsep}{2pt}
$
\left[\!\!
\begin{array}{cccccc|c}
0 & 1 & 0 & 1 & 0 & 1 & 0 
\\ 
1 & 0 & 1 & 0 & 1 & 0 & 1 
\end{array}
\!\!\right]
$
}
&
\tiny{
\setlength{\arraycolsep}{2pt}
$
\left[\!\!
\begin{array}{c}
1
\\
1
\end{array}
\!\!\right]
$
}
&
\small{$4$}
\\
\midrule
5
&
$
\frac
{\KK[T_1, \ldots , T_6, S_1]}
{\langle T_{1}T_{2}+T_{3}^2T_{4}+T_5^2T_{6} \rangle}
$
&
\tiny{
\setlength{\arraycolsep}{2pt}
$
\begin{array}{c}
\left[\!\!
\begin{array}{cccccc|c}
0 & 2a+1 & a & 1 & a & 1 & 1
\\ 
1 & 1 & 1 & 0 & 1 & 0 & 0
\end{array}
\!\!\right]
\\[1em]
a \ge 0
\end{array}
$
}
&
\tiny{
\setlength{\arraycolsep}{2pt}
$
\left[\!\!
\begin{array}{c}
2a+2
\\
1
\end{array}
\!\!\right]
$
}
&
\small{$4$}
\\
\midrule
6
&
$
\frac
{\KK[T_1, \ldots , T_6, S_1]} 
{\langle T_{1}T_{2}+T_{3}T_{4}+T_5^2T_{6} \rangle}
$
&
\tiny{
\setlength{\arraycolsep}{2pt}
$
\begin{array}{c}
\left[\!\!
\begin{array}{cccccc|c}
0 & 2c+1 & a & b & c & 1 & 1 
\\ 
1 & 1 & 1 & 1 & 1 & 0 & 0 
\end{array}
\!\!\right]
\\[1em]
a, b, c \ge 0, \quad a<b,\\
a+b=2c+1
\end{array}
$
}
&
\tiny{
\setlength{\arraycolsep}{2pt}
$
\left[\!\!
\begin{array}{c}
2c+2
\\
1
\end{array}
\!\!\right]
$
}
&
\small{$4$}
\\
\midrule 
7
&
$
\frac
{\KK[T_1, \ldots , T_6, S_1]}
{\langle T_{1}T_{2}+T_{3}T_{4}+T_5T_{6} \rangle}
$
&
\tiny{
\setlength{\arraycolsep}{2pt}
$
\left[\!\!
\begin{array}{cccccc|c}
0 & 0 & 0 & 0 & -1 & 1 & 1
\\ 
1 & 1 & 1 & 1 & 1 & 1 & 0
\end{array}
\!\!\right]
$
}
&
\tiny{
\setlength{\arraycolsep}{2pt}
$
\left[\!\! 
\begin{array}{c}
1
\\
2
\end{array}
\!\!\right]
$
}
&
\small{$4$}
\\
\midrule
8
&
$
\frac
{\KK[T_1, \ldots , T_6, S_1,S_2]}
{\langle T_{1}T_{2}+T_{3}T_{4}+T_5T_{6} \rangle}
$
&
\tiny{
\setlength{\arraycolsep}{2pt}
$
\begin{array}{c}
\left[\!\! 
\begin{array}{cccccc|cc}
0 & 0 & 0 & 0 & 0 & 0 & 1 & 1
\\
1 & 1 & 1 & 1 & 1 & 1 & 0 & a
\end{array}
\!\!\right]
\\[1em]
a\in\{1,2,3\}
\end{array}
$
}
&
\tiny{
\setlength{\arraycolsep}{2pt}
$
\left[\!\! 
\begin{array}{c}
1
\\
a+1
\end{array}
\!\!\right]
$
}
&
\small{$5$} 
\\
\midrule
8
&
$
\frac
{\KK[T_1, \ldots , T_6, S_1,S_2,S_3]}
{\langle T_{1}T_{2}+T_{3}T_{4}+T_5T_{6} \rangle}
$
&
\tiny{
\setlength{\arraycolsep}{2pt}
$
\begin{array}{c}
\left[\!\! 
\begin{array}{cccccc|ccc}
0 & 0 & 0 & 0 & 0 & 0 & 1 & 1 & 1
\\
1 & 1 & 1 & 1 & 1 & 1 & 0 & a-1 & a
\end{array}
\!\!\right]
\\[1em]
a\in\{1,2\}
\end{array}
$
}
&
\tiny{
\setlength{\arraycolsep}{2pt}
$
\left[\!\! 
\begin{array}{c}
1
\\
a+1
\end{array}
\!\!\right]
$
}
&
\small{$6$} 
\\
\midrule
8
&
$
\frac
{\KK[T_1, \ldots , T_6, S_1,\ldots,S_4]}
{\langle T_{1}T_{2}+T_{3}T_{4}+T_5T_{6} \rangle}
$
&
\tiny{
\setlength{\arraycolsep}{2pt}
$
\left[\!\! 
\begin{array}{cccccc|cccc}
0 & 0 & 0 & 0 & 0 & 0 & 1 & 1 & 1 & 1
\\
1 & 1 & 1 & 1 & 1 & 1 & 0 & 0 & 0 & 1
\end{array}
\!\!\right]
$
}
&
\tiny{
\setlength{\arraycolsep}{2pt}
$
\left[\!\! 
\begin{array}{c}
1
\\
2
\end{array}
\!\!\right]
$
}
&
\small{$7$} 
\\
\midrule
9
&
$
\frac
{\KK[T_1, \ldots , T_6, S_1, S_2]}
{\langle T_{1}T_{2}+T_{3}T_{4}+T_5T_{6} \rangle}
$
&
\tiny{
\setlength{\arraycolsep}{2pt}
$
\begin{array}{c}
\left[\!\! 
\begin{array}{cccc|cc}
0 & a_2 & \ldots & a_6 & 1 & 1 
\\ 
1 & 1 & \ldots & 1 & 0 & 0
\end{array}
\!\!\right]
\\[1em]
0 \le a_3 \le a_5 \le a_6 \le a_4 \le a_2,\\
 a_2=a_3+a_4=a_5+a_6
\end{array}
$
}
&
\tiny{
\setlength{\arraycolsep}{2pt}
$
\left[\!\! 
\begin{array}{c}
a_2+1
\\
1
\end{array}
\!\!\right]
$
}
&
\small{$5$}
\\
\midrule
10
&
$
\frac
{\KK[T_1, \ldots , T_5, S_1]}
{\langle T_{1}T_{2}+T_{3}T_{4}+T_{5}^2 \rangle}
$
&
\tiny{
\setlength{\arraycolsep}{2pt}
$
\left[\!\! 
\begin{array}{ccccc|c}
1 & 1 & 1 & 1 & 1 & 0
\\ 
-1 & 1 & 0 & 0 & 0 & 1
\end{array}
\!\!\right]
$
}
&
\tiny{
\setlength{\arraycolsep}{2pt}
$
\left[\!\! 
\begin{array}{c}
2
\\
1
\end{array}
\!\!\right]
$
}
&
\small{$3$}
\\
\midrule
11
&
$
\frac
{\KK[T_1, \ldots , T_5, S_1,S_2]}
{\langle T_{1}T_{2}+T_{3}T_{4}+T_{5}^2 \rangle}
$
&
\tiny{
\setlength{\arraycolsep}{2pt}
$
\begin{array}{c}
\left[\!\! 
\begin{array}{ccccc|cc}
1 & 1 & 1 & 1 & 1 & 0 & a
\\ 
0 & 0 & 0 & 0 & 0 & 1 & 1
\end{array}
\!\!\right]
\\[1em]
a\in\{1,2\}
\end{array}
$
}
&
\tiny{
\setlength{\arraycolsep}{2pt}
$
\left[\!\! 
\begin{array}{c}
a+1
\\
1
\end{array}
\!\!\right]
$
}
&
\small{$4$}
\\
\midrule
11
&
$
\frac
{\KK[T_1, \ldots , T_5, S_1,S_2,S_3]}
{\langle T_{1}T_{2}+T_{3}T_{4}+T_{5}^2 \rangle}
$
&
\tiny{
\setlength{\arraycolsep}{2pt}
$
\left[\!\! 
\begin{array}{ccccc|ccc}
1 & 1 & 1 & 1 & 1 & 0 & 0 & 1
\\ 
0 & 0 & 0 & 0 & 0 & 1 & 1 & 1
\end{array}
\!\!\right]
$
}
&
\tiny{
\setlength{\arraycolsep}{2pt}
$
\left[\!\! 
\begin{array}{c}
2
\\
1
\end{array}
\!\!\right]
$
}
&
\small{$5$}
\\
\midrule 
12
&
$
\frac{\KK[T_1, \ldots , T_5, S_1,S_2]}
{\langle T_{1}T_{2}+T_{3}T_{4}+T_{5}^2 \rangle}
$
&
\tiny{
\setlength{\arraycolsep}{2pt}
$
\begin{array}{c}
\left[\!\!
\begin{array}{ccccc|cc}
1 & 1 & 1 & 1 & 1 & 0 & 0
\\ 
0 & 2c & a & b & c & 1 & 1
\end{array}
\!\!\right]
\\[1em]
0 \le a \le c \le b,  \ a+b=2c
\end{array}
$
}
&
\tiny{
\setlength{\arraycolsep}{2pt}
$
\left[\!\! 
\begin{array}{c}
1
\\
2c+1
\end{array}
\!\!\right]
$
}
&
\small{$4$}
\\
\bottomrule
\end{longtable}
}
\end{center}
For Nos.~4, 8 and~11, the variety $Y$ is Fano
and any iterated duplication of $w_r$ produces a 
Fano variety $X$.
For the remaining cases, the following table 
tells which~$Y$ are Fano and gives the 
characterizing condition when an iterated 
duplication of $w_r$ produces a Fano variety $X$: 
\begin{longtable}{cccccccc}
No.
&
5
&
6
&
7
&
9
&
10
&
12
\\
\toprule
$Y$ Fano
&
$a=0$
&
$c=0$
&
$\checkmark$
&
$a_2=0$
&
$\checkmark$
&
$c=0$
\\
\midrule
$X$ Fano
&
$m > 2a$
&
$m > 3c+1$
&
$m \le 3$
&
$m > 2a_2$
&
$m \le 2$
&
$m > 3c$
\\
\bottomrule
\end{longtable}
\end{theorem}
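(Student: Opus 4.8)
The plan is to work through the families of Theorem~\ref{thm:main2} one at a time, using the extra hypothesis to isolate the ones with a free variable. By Proposition~\ref{prop:noisodivs} and its proof, a variety $X=X(A,P,u)$ carries a prime divisor with infinite general $T$-isotropy exactly when $m\ge 1$; hence $X$ occurs in one of the families 4.A, 4.B, 4.C, 5, 6, 7, 8, 9, 10, 11, 12 of Theorem~\ref{thm:main2}, with $m\ge 1$. In each of these the variables $S_1,\dots,S_m$ are absent from the relations, and we single out the free weight $w_r:=\deg(S_m)\in K=\ZZ^2$.

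The first step is to recognise the transition $m\rightsquigarrow m+1$ within each family as an instance of Construction~\ref{const:duplicate}. For the families 4.B, 4.C, 5, 6, 7, 9, 10, 12 the weights $\deg(S_1),\dots,\deg(S_m)$ all coincide, so enlarging $m$ by one and keeping the other parameters fixed merely appends a generator of weight $w_r$ and is thus the duplication of $w_r$; for 4.A, 8, 11 (where the $\deg(S_k)$ are non-uniform) one appends a copy of the last, i.e.\ largest, $S$-weight, which again is precisely duplicating $w_r$. By Proposition~\ref{prop:duplicate}(iii),(iv) this preserves smoothness and the complete intersection property, so within each family the members listed in Theorem~\ref{thm:main1} are genuinely linked by iterated duplications of $w_r$.

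Next I would pin down the starting models by running the inverse of Construction~\ref{const:duplicate}: whenever the data still contains two free generators of the common weight $w_r$, delete one; this is again smooth and a complete intersection by Proposition~\ref{prop:duplicate}(iii),(iv) and stays in the same family of Theorem~\ref{thm:main1} with $m$ lowered by one. Iterating until it terminates, one reaches a finite set of varieties: for the uniform families this is the member with the smallest admissible $m$, and for 4.A, 8, 11 it is the member obtained by removing the maximal terminal run of equal $S$-weights (so that either $m$ is minimal or the last $S$-weight is non-repeating). Matching parameter ranges shows this set is exactly the table of $Y$'s in the statement; as each listed $Y$ has dimension at most seven, this also yields Corollary~\ref{cor:duplicate}.

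It remains to settle the Fano assertions. In every family occurring here the proof of Theorems~\ref{thm:main1}--\ref{thm:main3} records $-\mathcal{K}_X$ and $\SAmple(X)$ explicitly in terms of $m$ and the remaining parameters; turning ``$-\mathcal{K}_X$ ample'' into a numerical inequality gives the conditions in the final table (for instance $m>2a$ for No.~5, $m>3c+1$ for No.~6, $m\le 3$ for No.~7, $m>2a_2$ for No.~9, $m\le 2$ for No.~10, $m>3c$ for No.~12), and reading these off at the minimal admissible $m$ produces the ``$Y$ Fano'' column. For Nos.~4, 8, 11 one checks directly that the starting models are Fano; since there $w_r$ spans an extremal ray of $\SAmple(X)$, hence is semiample, Proposition~\ref{prop:duplicate}(v) forces every iterated duplication to remain Fano, and together with the surjectivity from the previous step this gives the unconditional statements. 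The step I expect to be the real obstacle is the bookkeeping of the third paragraph: determining precisely where the stripping of $w_r$ terminates for the non-uniform families 4.A, 8, 11 and confirming that the resulting data is still one of the smooth varieties of Theorem~\ref{thm:main1} — here the uniqueness part of that theorem is what prevents omissions or repetitions.
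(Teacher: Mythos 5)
Your plan coincides in all essentials with the paper's proof: the equivalence ``divisor with infinite general isotropy $\Leftrightarrow m\ge 1$'', the identification of the passage $m\rightsquigarrow m+1$ inside each family of Theorem~\ref{thm:main2} with duplication of the last free weight, and the Fano conditions read off from the recorded anticanonical classes and semiample cones. The paper disposes of the uniform-weight families in one sentence and concentrates its entire effort on Nos.~8 and~11 --- exactly the step you set aside as ``bookkeeping''. That step \emph{is} the proof: for No.~8 one must solve $4+a_2+\cdots+a_m>ma_m$ subject to $0\le a_2\le\cdots\le a_m$ and $a_m\ge 1$, and the paper records that the solutions are precisely (a) $a_2=\cdots=a_m\in\{1,2,3\}$, (b) $a_2+1=a_3=\cdots=a_m\in\{1,2\}$ with $m\ge 3$, (c) $a_2=a_3=0$ and $a_4=\cdots=a_m=1$ with $m\ge 4$; each family is visibly a tabulated $Y$ with its top weight repeated, and No.~11 is handled the same way with $3+a_2+\cdots+a_m>ma_m$. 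Your backward ``stripping'' formulation is logically equivalent but does not bypass this enumeration: certifying that stripping terminates \emph{inside the table} requires knowing exactly which tuples $(a_2,\ldots,a_m)$ satisfy the Fano inequality. So the proposal is the right proof with its crux asserted rather than executed.

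One concrete point the deferred bookkeeping would have to confront: in family 4.A the value $c=-1$ is admissible already for $m=1$ (see the parameter constraints in Theorem~\ref{thm:main2} and the discussion of No.~4.A, Case~1, in Section~\ref{sec:geomfanos}), giving a smooth Fano fourfold whose single free weight $(-1,1)$ is unrepeated. It is therefore terminal for your stripping procedure, yet it does not appear among the tabulated $Y$ and cannot arise from any of them by duplication of $w_r$ (the listed dimension-four model of 4.A has free weight $(0,1)$, and the two gradings are inequivalent: one has two distinct generator degrees, the other three). So ``matching parameter ranges'' does not close up for 4.A exactly as you assert, and the uniqueness statement of Theorem~\ref{thm:main1} will not rescue it; this edge case needs to be settled explicitly. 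Finally, your appeal to Proposition~\ref{prop:duplicate}(v) for the unconditional Fano claims in Nos.~4, 8, 11 is correct (the weight $w_r$ does span an extremal ray of $\SAmple(X)$ there), but it becomes redundant once the enumeration is done, since Theorem~\ref{thm:main2} already lists which members of each family are Fano --- which is how the paper argues.
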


\begin{proof}
A $T$-variety $X = X(A,P,u)$ has a divisor 
with infinite general $T$-isotropy
if and only if $m\ge1$ holds.
In the cases 4.A, 4.B, 4.C, 5, 6, 7, 9, 10 and 12
we directly infer from Theorem~\ref{thm:main2}
that the examples with higher $m$ arise from those
listed in the table above via iterated 
duplication of $w_r$.

We still have to consider Nos.~8 and 11.
If $X$ is a variety of type~8,
then the condition for $X$ to be a Fano variety is
$$
4 + a_2 + \ldots, + a_m > ma_m,
$$
where $a_m=1,2,3$ and $0\le a_2 \le \ldots \le a_m$.
This is satisfied if and only if one of the following
conditions holds:
\begin{enumerate}
\item
$a_2 = \ldots = a_m \in \{1,2,3\}$.
\item
$a_2+1 = a_3 \ldots = a_m \in \{1,2\}$, with $m\ge3$.
\item
$a_2=a_3=0$ and $a_4 = \ldots = a_m = 1$, with $m\ge4$.
\end{enumerate}
Similarly for No.~11 the Fano condition in the table of Theorem~\ref{thm:main2}
is equivalent to the fulfillment of one of the following:
\begin{enumerate}
\item
$a_2 = \ldots = a_m \in \{1,2\}$.
\item
$a_2=0$ and $a_3 = \ldots = a_m = 1$, with $m\ge3$.
\end{enumerate}
In both cases this explicit characterization
makes clear that we are in the setting
of the duplication of a free weight.
\end{proof}

\begin{remark}
Consider iterated duplication of $w_r$
for a variety $X=X(A,P,u)$ as in Theorem~\ref{thm:duplicate}.
Recall that the effective cone of $X$ is decomposed as
$\tp \cup \tx \cup \tm$, where $\tx =\Ample(X)$.
Lemma \ref{lem:tau}~(i) says $w_r \not\in \tx$
and thus we have a unique $\kappa \in \{ \tp, \tm\}$
with $w_r \notin \kappa$.
Then the number of flips per duplication step 
equals
$$
|\{\cone(w_{ij}), \cone(w_k); \ w_{ij},w_k \in \kappa\}|-1.
$$
In particular, for Nos.~4.A, 4.B, 4.C, 8, 11, 9 with $a_i =0$, 
12 with $b=0$ the duplications steps require no flips.
\end{remark}

\begin{remark}
\label{rem:toric-not}
For toric Fano varieties, 
there is no statement like 
Corollary~\ref{cor:duplicate}. 
Recall from~\cite{BeHa:2004} that all 
smooth projective toric varieties $Z$ 
with $\Cl(Z)=\ZZ^2$ admit
a description via the following data:
\begin{itemize}
\item
weight vectors $w_1 := (1,0)$ and $w_i := (b_i,1)$ with
$0=b_n < b_{n-1} < \ldots < b_2$,
\item
multiplicities $\mu_i := \mu(w_i)\ge1$,
where $\mu_1\ge2$ and $\mu_2+\ldots+\mu_n\ge2$.
\end{itemize}
\begin{center}
\begin{tikzpicture}[scale=0.6]
   \path[fill=gray!60!] (0,0)--(7,0)--(7,1+1/6)--(0,0);  
   \coordinate[] (w1) at (1,0);
   \fill (w1) circle (3pt);
   \node[below] at (w1) {\tiny{$(\mu_1)$}};
   \coordinate[] (w2) at (6,1);
   \fill (w2) circle (3pt);
   \node[above] at (w2) {\tiny{$(\mu_2)$}};
   \coordinate[] (w3) at (4,1);
   \fill (w3) circle (3pt);
   \node[above] at (w3) {\tiny{$(\mu_3)$}};
   \coordinate[] (w4) at (3,1);
   \fill (w4) circle (3pt);
   \node[above] at (w4) {\tiny{$(\mu_4)$}};
   \coordinate[] (w5) at (0,1);   
   \node[left] at (w5) {\tiny{$(\mu_n)$}};
   \fill (w5) circle (3pt);
   \draw (-1,0) -- (7,0);
   \draw (0,-1) -- (0,2);
   \draw (0,0) -- (7,1+1/6);
   \fill (1.1, 1) circle (1.25pt);
   \fill (1.5, 1) circle (1.25pt);
   \fill (1.9, 1) circle (1.25pt);
\end{tikzpicture}
\end{center}
The variety $Z$ arises from the bunched polynomial ring 
$(R,\mathfrak{F},\Phi)$,
where $R$ equals $\KK[S_{ij} ;\, 1 \le i \le n, 1 \le j \le \mu_i ]$
with the system of generators
$\mathfrak{F}=(S_{11},\ldots,S_{n\mu_n})$ 
and the bunch
$\Phi = \{ \cone(w_{1},w_{i}); i=2,\ldots,n \}.$
In this setting $Z$ is Fano if and only if
$$
b_2(\mu_3+\ldots+\mu_n) < \mu_1 + \mu_3b_3 + \ldots + \mu_{n-1}b_{n-1}.
$$
For any $n\in\ZZ_{\ge4}$ and $i=2,\ldots,n$ set $\mu_i := 1$
and $w_i := (n-i,1)$.
Then, with $\mu_1 := 2$ we obtain a smooth (non-Fano) toric 
variety $Z_n'$ of Picard number two and dimension $n-1$.
Moreover, for $\mu_1 := 1+(n-2)(n-1)/2$ we obtain
a smooth toric Fano variety $Z_n$ of Picard number two
that is Fano and is obtained from $Z_n'$ via iterated 
duplication of $w_1$ but cannot be constructed from any 
lower dimensional smooth variety this way.
\end{remark}

\section{Geometry of the Fano varieties}
\label{sec:geomfanos}

We take a closer look at the Fano varieties~$X$
listed in Theorem~\ref{thm:main2} and describe 
their possible divisorial contractions in detail,
i.e., the morphisms $X \to Y$ arising from 
semiample divisors which either birationally 
contract a prime divisor or are Mori fiber spaces.
The approach is via a suitable
ambient toric variety. 
The following Remark can be found, at least partially, 
for example in~\cite[Section~7.3]{CLS}.

\begin{remark}
\label{rem:torgeo}
Let $Z$ be a smooth projective toric variety 
of Picard number~2, given by 
weight vectors $w_1 := (1,0)$ and $w_i := (b_i,1)$ with
$0=b_n < b_{n-1} < \ldots < b_2$,
and multiplicities $\mu_i := \mu(w_i)\ge1$,
where $\mu_1\ge2$ and $\mu_2+\ldots+\mu_n\ge2$
as in Remark~\ref{rem:toric-not}.
Then the toric variety $Z$ is a projectivized 
split vector bundle of rank~$r$ over a projective 
space $Y_Z := \PP_s$, where $s := \mu_1-1$ 
and $r := \mu_2 + \ldots + \mu_n-1$.
More precisely, we have
$$ 
Z
\ \cong \ 
\PP
\left( 
\bigoplus_{i=1}^{\mu_n} \mathcal{O}_{\PP_s} 
\oplus
\bigoplus_{i=1}^{\mu_{n-1}} \mathcal{O}_{\PP_s}(b_{n-1}) 
\oplus
\ldots
\oplus
\bigoplus_{i=1}^{\mu_{2}} \mathcal{O}_{\PP_s}(b_{2}) 
\right).
$$
The bundle projection $Z \to Y_Z$ is the 
divisorial contraction associated to the divisor class 
$w_1 \in \ZZ^2 = \Cl(Z)$.
If $n=2$ holds, then we have 
$Z\cong\PP_s \times \PP_{r}$.
If $n=3$ and $\mu_3 = 1$ hold, then the class 
$w_3  \in \ZZ^2 = \Cl(Z)$ gives rise to  
a birational divisorial contraction 
onto a weighted projective space:
$$ 
Z 
\ \to \ 
Z' 
\ := \ 
\PP(
\underbrace{1,\ldots,1}_{\mu_1},
\underbrace{b_2,\ldots,b_2}_{\mu_2}
).
$$
The exceptional divisor $E_Z \subseteq Z$ 
is isomorphic to $\PP_s \times \PP_{\mu_2 -1}$
and the center $C(Z') \subseteq Z'$ of 
the contraction is isomorphic to $\PP_{\mu_2 -1}$.
In particular, for $\mu_2=1$, we have
$E_Z \cong \PP_s$ and $C(Z')$ is a point.
\end{remark}

From the explicit description of the Cox ring 
of our Fano variety $X$, we obtain via 
Construction~\ref{constr:RAPu} a closed 
embedding $X \to Z$ into a toric variety~$Z$.
As a byproduct of our classification, it turns 
out that, whenever~$X$ admits a divisorial contraction, 
then $X$ 
inherits all its divisorial contractions from~$Z$.
Remark~\ref{rem:torgeo} together with the 
explicit equations for $X$ in $Z$ will then allow us to study 
the situation in detail.
We now present the results. 
The cases are numbered according to the 
table of Theorem~\ref{thm:main2}.
Moreover, we denote by $Q_3\subseteq\PP_4$ 
and $Q_4\subseteq\PP_5$
the three and four-dimensional smooth 
projective quadrics
and we write $\PP(a_1^{\mu_1},\ldots,a_r^{\mu_r})$ 
for the weighted projective space,
where the superscript $\mu_i$ indicates that the weight 
$a_i$ occurs $\mu_i$ times.

\begin{vartype}{1} \label{rem::no1} 
The variety $X$ is of dimension four and admits 
two divisorial contractions,
$Q_4 \leftarrow  X \to \PP_1$.
The morphism $X \to Q_4$ is birational with exceptional 
divisor isomorphic to $\PP_1\times\PP_1\times\PP_1$
and center isomorphic to $\PP_1\times\PP_1$.
The morphism $X \to \PP_1$ is a Mori fiber space with 
general fiber isomorphic to $Q_3$ and singular fibers 
over $[0,1]$ and $[1,0]$ each isomorphic to 
the singular quadric $V(T_2T_3+T_4T_5) \subseteq \PP_4$.
\end{vartype}

\begin{vartype}{2} \label{rem::no2} 
The variety $X$ is of dimension four and admits 
two divisorial contractions,
$Q_4 \leftarrow  X \to \PP_3$.
The morphism $X \to Q_4$ is birational with exceptional 
divisor isomorphic to a hypersurface of bidegree $(1,1)$ in 
$\PP_1\times\PP_3$ and center isomorphic to~$\PP_1$.
The morphism $X \to \PP_3$ is a Mori fiber space 
with fibers isomorphic to $\PP_1$.
\end{vartype}

\begin{vartype}{3} \label{rem::no3} 
The variety $X$ is of dimension three and occurs 
as No.~2.29 in the Mori-Mukai classification~\cite{MoMu}.
Moreover, $X$ admits two divisorial contractions,
$Q_3 \leftarrow  X \to \PP_1$.
The morphism $X \to Q_3$ is birational with exceptional 
divisor isomorphic to  $\PP_1\times \PP_1$ 
and center isomorphic to $\PP_1$.
The morphism $X \to \PP_1$ is a Mori fiber space with 
general fiber isomorphic to $\PP_1 \times \PP_1$ and singular fibers 
over $[0,1]$ and $[1,0]$ each isomorphic to 
$V(T_1T_2+T_3^2) \subseteq \PP_3$.
\end{vartype}

\begin{vartype}{4A} \label{rem::no4A} 
\emph{Case~1:} we have $c= -1$.
Then $X$ admits  two divisorial 
contractions $Y \leftarrow  X \to \PP_2$, where 
$Y := V(T_1T_2+T_3T_4+T_5T_6) \subseteq \PP_{m+4}$
is a terminal factorial Fano variety which 
is smooth if and only if $m=1$ holds.
The morphism $X \to Y$ is birational with exceptional 
divisor isomorphic to  a  hypersurface of bidegree $(1,1)$ in 
$\PP_2\times\PP_{m+1}$ and center isomorphic to $\PP_{m+1}$.
The morphism $X \to \PP_2$ is a Mori fiber space with 
fibers isomorphic to $\PP_{m+1}$.

\medskip
\noindent
\emph{Case~2:} we have $c = 0$.
Then $X$ is a hypersurface of bidegree 
$(1,1)$ in $\PP_2\times\PP_{m+2}$.
Moreover, $X$ admits two Mori fiber spaces
$\PP_{m+2} \leftarrow  X \to \PP_2$.
The Mori fiber space $X \to \PP_2$ has fibers isomorphic 
to $\PP_{m+1}$, whereas the Mori fiber space $X \to \PP_{m+1}$ has 
general fiber isomorphic to $\PP_{1}$ and special fibers 
over $V(T_1,T_2,T_3) \subseteq \PP_{m+2}$ isomorphic
to $\PP_2$.
For $m=0$, we have $\dim(X)=3$ and $X$ is
the variety No.~2.32 in~\cite{MoMu}.
\end{vartype}

\begin{vartype}{4B} \label{rem::no4B} 
The variety $X$ admits  two divisorial 
contractions $Y \leftarrow  X \to \PP_2$, where 
$Y := V(T_1^2+T_2T_3+T_4T_5) \subseteq \PP_{m+4}$
is a terminal factorial Fano variety.
The variety~$Y$ is smooth
if and only if $m=0$ holds and in this case $X$ 
occurs as No.~2.31 in~\cite{MoMu}.
The morphism $X \to Y$ is birational with exceptional 
divisor isomorphic to  a  hypersurface of bidegree $(1,1)$ in 
$\PP_2\times\PP_{m+1}$
and center isomorphic to $\PP_{m+1}$.
The morphism $X \to \PP_2$ is a Mori fiber space with 
fibers isomorphic to $\PP_{m+1}$.
\end{vartype}

\begin{vartype}{4C} \label{rem::no4C} 
The variety $X$ is a hypersurface of bidegree 
$(2,1)$ in $\PP_2\times\PP_{m+2}$;
for $m=0$ we have $\dim(X)=3$ and $X$ is
No.~2.24 in~\cite{MoMu}.
Moreover, $X$ admits two Mori fiber spaces
$\PP_{m+2} \leftarrow  X \to \PP_2$.
The morphism $X \to \PP_2$ has fibers
isomorphic to $\PP_{m+1}$.
To describe the fibers of 
$\varphi \colon X \to \PP_{m+2}$,
set $Y_{i} := V_{\PP_{m+2}}(T_i)$,
$Y_{ij} := V_{\PP_{m+2}}(T_i, T_j)$ and  
$Y_{123} := V_{\PP_{m+2}}(T_1, T_2, T_3)$.
Then we have
\begin{equation*}
\varphi^{-1}(z) \ \cong \
\begin{cases}
\PP_2 & \text{if } z\in Y_{123}, \\
\PP_1 & \text{if } z\in (Y_{12} \cup Y_{13}  \cup Y_{23}) \setminus Y_{123}, \\
V_{\PP_2}(T_1T_2) & \text{if } z\in (Y_{1} \cup Y_{2}  \cup Y_{3}) \setminus (Y_{12} \cup Y_{13}  \cup Y_{23}), \\
\PP_1  & \text{otherwise}.
\end{cases}
\end{equation*}
\end{vartype}

\begin{vartype}{5} \label{rem::no5} 
The variety $X$ admits a Mori fiber space
$\varphi \colon X \to \PP_{m+1}$, whose general fiber is 
isomorphic to $\PP_1\times\PP_1$. More precisely,
with $Y_1 := V_{\PP_{m+1}}(T_1)$
and $Y_2 := V_{\PP_{m+1}}(T_2)$, we have
\begin{equation*}
\varphi^{-1}(z) \ \cong \
\begin{cases}
V_{\PP_3}(T_1T_2)  & \text{if } z\in Y_1\cap Y_2, \\
V_{\PP_3}(T_1T_2+T_3^2)  & \text{if }
z\in Y_1\setminus Y_2 \text{ or } z\in Y_2\setminus Y_1, \\
\PP_1\times\PP_1 & \text{otherwise}.
\end{cases}
\end{equation*}
\end{vartype}

\begin{vartype}{6} \label{rem::no6} 
The variety $X$ admits a Mori fiber space
$X \to \PP_{m}$, with 
general fiber isomorphic to $Q_3$ and singular fibers 
over $V(T_1) \subseteq \PP_{m}$ each isomorphic
to $V(T_1T_2+T_3T_4) \subseteq \PP_4$. 
\end{vartype}

\begin{vartype}{7} \label{rem::no7} 
The variety $X$ admits a birational divisorial contraction $X \to \PP_{m+3}$
with exceptional divisor isomorphic to
the projectivized split bundle
$$
\PP \ \biggl( \ \bigoplus_{i=1}^{m} \mathcal{O}_{\PP_1\times\PP_1}
\oplus \mathcal{O}_{\PP_1\times\PP_1}(1,1) \ \biggr)
$$
and center isomorphic to $\PP_1\times\PP_1$.
Moreover, if $m=1$ holds, $X$ admits a 
further birational divisorial 
contraction $X \to Q_4$ with exceptional divisor isomorphic to 
$\PP_3$  and center a point.
\end{vartype}

\begin{vartype}{8} \label{rem::no8}
Here we have $X = \PP(\mathcal{O}_{Q_4} 
\oplus \mathcal{O}_{Q_4} (a_2) \ldots
\oplus \mathcal{O}_{Q_4} (a_m) )$.
Thus, there is a Mori fiber space $X \to Q_4$ 
with fibers isomorphic to $\PP_{m-1}$.
If $a_2= \ldots = a_m > 0$ holds, then $X$ admits 
in addition a birational divisorial contraction $X \to Y$, where 
$Y := V(T_1T_2+T_3T_4+T_5T_6) \subseteq \PP(1^6, a_2^{m-1})$.
The exceptional divisor is isomorphic to $Q_4 \times \PP_{m-2}$ 
and the center to $\PP_{m-2}$.
\end{vartype}

\begin{vartype}{9} \label{rem::no9} 
The variety $X$ is a bundle over $\PP_{m-1}$
with fibers isomorphic to $Q_4$.
In particular, if $a_i=0$ holds for all $2\le i \le 6$, then
$X\cong Q_4\times\PP_{m-1}$.
\end{vartype}

\begin{vartype}{10} \label{rem::no10} 
The variety $X$ admits a birational divisorial 
contraction $X \to \PP_{m+2}$ with exceptional 
divisor isomorphic to the projectivized split bundle
$$
\PP \ \biggl( \ \bigoplus_{i=1}^{m} \mathcal{O}_{\PP_1}
\oplus \mathcal{O}_{\PP_1}(1) \ \biggr)
$$
and center isomorphic to $\PP_1$.
For $m=1$, we have $\dim(X)=3$ and 
$X$ is No.~2.30 from~\cite{MoMu};
in this case it admits a further birational divisorial 
contraction $X \to Q_3$ with exceptional divisor isomorphic 
to $\PP_2$ and center a point.
\end{vartype}

\begin{vartype}{11} \label{rem::no11} 
Here $X = \PP(\mathcal{O}_{Q_3} 
\oplus \mathcal{O}_{Q_3} (a_2) \ldots
\oplus \mathcal{O}_{Q_3} (a_m) ) $ holds.
Thus, there is a Mori fiber space $X \to Q_3$ 
with fibers isomorphic to $\PP_{m-1}$.
If $a_2= \ldots = a_m > 0$ holds, then $X$ admits 
a birational divisorial contraction $X \to Y$, where 
the variety~$Y$ equals 
$V(T_1T_2+T_3T_4+T_5^2) \subseteq \PP(1^5, a_2^{m-1})$.
The exceptional divisor is isomorphic to $Q_3 \times \PP_{m-2}$ 
and the center  to $\PP_{m-2}$.
\end{vartype}

\begin{vartype}{12} \label{rem::no12} 
The variety $X$ is a bundle over $\PP_{m-1}$
with fibers isomorphic to $Q_3$.
In particular, if $a=b=c=0$ holds, then
$X\cong Q_3\times\PP_{m-1}$.
\end{vartype}

\begin{vartype}{13} \label{rem::no13}
This case presents a one-parameter family
of varieties $X_\lambda$,
with parameter $\lambda\in\KK^*\!\setminus\!\{1\}$.
They are generally non-isomorphic to each other,
except for the pairs $X_\lambda \cong X_{\lambda^{-1}}$
for all $\lambda$.
The variety $X_\lambda$ is the intersection of two
hypersurfaces
$$
D_1
\ = \ 
V(T_1S_1+T_2S_2+T_3S_3),
\qquad
D_2
\ = \ 
V(\lambda T_2S_2+T_3S_3+T_4S_4),
$$ 
both of bidegree (1,1) in $\PP_3\times\PP_3$,
where the $T_i$ are the coordinates of the first $\PP_3$
and the $S_j$ those of the second. 
Note that each $D_i$ has an isolated singularity,
which is not contained in the other hypersurface.
Both $D_1,D_2$ are terminal and factorial.
Moreover, $X$ admits two Mori fiber spaces 
$\PP_3 \leftarrow  X \to \PP_3$, both with 
typical fiber $\PP_1$ and having four 
special fibers, all isomorphic to $\PP_2$ 
and lying over the points 
$[1,0,0,0]$, $ [0,1,0,0]$, $[0,0,1,0]$
and $[0,0,0,1]$.
\end{vartype}

\begin{remark}
In contrast to the toric case,
a smooth projective variety of 
Picard number~$2$ with torus action
of complexity one need not admit a 
non-trivial Mori fiber space.
For example, in Theorem~\ref{thm:main2},
this happens in precisely two cases,
namely No.~7 and No.~10, both with $m=1$.
\end{remark}

\begin{remark}
In the list of Theorem~\ref{thm:main2}
there are several examples, where the 
effective cone coincides with the cone 
of movable divisor classes:
No.~4A with $c=0$,  No.~4C,  No.~5 with $a=0$,  
No.~6 with $a=0$, No.~8 with $a_2=0$,  No.~9 with $a_3=0$,
No.~11 with $a_2=0$,  No.~12 with $a=0$ and  No.~13.
Thus, these varieties admit no birational 
divisorial contraction.
\end{remark}

\begin{remark}
In Theorem~\ref{thm:main1} it is possible 
that non-isomorphic varieties share the same 
Cox ring and thus differ from each other 
by a small quasimodification, i.e. only by the choice
of the ample class.
This happens exactly in the following cases:
\begin{enumerate}
\item
No.~4 with $\l_2=l_4=2$, $l_6=1$, $a=0$,
$b=1$, $c_i=0$ for all $i=1,\ldots,m$ 
has the same Cox ring as 
No.~5 with $a=0$.
Note that for $m=0$ both varieties are truly almost Fano,
whereas for $m \ge 1$ No.~5 is Fano.
\item
For $m\ge1$, No.~4 with $\l_2=2$, $l_4=l_6=1$, $a=b=1$,
$c_i=0$ for all $i=1,\ldots,m$
has the same Cox ring as 
No.~6 with $a=c=0$ and $b=1$.
Note that for $m=1$ both varieties are truly almost Fano,
whereas for $m\ge2$ No.~6 is Fano.
\item
For $m\ge2$, No.~7
has the same Cox ring as 
No.~9 with $a_2=2$ and $a_3=\ldots=a_6=1$.
Note that for $m=2,3$ No.~7 is Fano,
for $m=4$ both varieties are truly almost Fano,
whereas for $m\ge5$ No.~9 is Fano.
\item
For $m\ge2$, No.~10 
has the same Cox ring as No.~12 with $a=b=c=1$.
Note that for $m=2$ No.~10 is Fano,
for $m=3$ both varieties are truly almost Fano,
whereas for $m\ge4$ No.~12 is Fano.
\end{enumerate}
\end{remark}


\end{document}